\newcommand{\+}{\nobreakdash-}
\renewcommand{\.}{\mskip .5\thinmuskip\relax}
\renewcommand{\:}{\colon}
\renewcommand{\;}{,\medspace}
\let\le\undefined
\DeclareMathSymbol{\le}{\mathrel}{AMSa}{"36}         
\let\ge\undefined
\DeclareMathSymbol{\ge}{\mathrel}{AMSa}{"3E}         
\DeclareMathOperator{\Spec}{Spec}
\DeclareMathOperator{\Hom}{Hom}
\DeclareMathOperator{\Ext}{Ext}
\DeclareMathOperator{\id}{id}
\newcommand{\rarrow}{\longrightarrow}
\newcommand{\ot}{\otimes}
\newcommand{\lrarrow}{\.\relbar\joinrel\relbar\joinrel\rightarrow\.}
\newcommand{\bu}{{\text{\smaller\smaller$\scriptstyle\bullet$}}}
\renewcommand{\d}{\partial}
\newcommand{\E}{{\mathcal E}}
\newcommand{\F}{{\mathcal F}}
\newcommand{\D}{{\mathcal D}}
\newcommand{\M}{{\mathcal M}}
\newcommand{\A}{{\mathcal A}}
\newcommand{\T}{{\mathcal T}}
\newcommand{\U}{{\mathcal U}}
\newcommand{\Z}{{\mathbb Z}}
\newcommand{\Q}{{\mathbb Q}}
\newcommand{\R}{{\mathbb R}}
\renewcommand{\H}{{\mathbb H}}
\newcommand{\Et}{\vphantom{\acute E}
   \smash{\mathit{\acute Et}}\vphantom{E}}
\newcommand{\Nis}{{\mathit Nis}}
\newcommand{\Zar}{{\mathit Zar}}
\newcommand{\gr}{{\mathrm gr}}
\newcommand{\cc}{{cc}}
\newcommand{\usm}[1]{\underline{\smash{#1}\!\.}\.}
\newcommand{\Section}[1]{\bigskip\section{#1}\medskip}
\theoremstyle{plain}
\newtheorem{thm}{Theorem}[section]
\newtheorem{lem}[thm]{Lemma}
\newtheorem{prop}[thm]{Proposition}
\newtheorem{cor}[thm]{Corollary}
\theoremstyle{definition}
\newtheorem{rem}[thm]{Remark}
\begin{document}

\title{Artin--Tate motivic sheaves with finite coefficients \\
over an algebraic variety}
\author{Leonid Positselski}

\address{Sector of Algebra and Number Theory, Institute for Information
Transmission Problems, Moscow 127994; and \newline\indent 
Laboratory of Algebraic Geometry, National Research University Higher
School of Economics, Moscow 117312, Russia; and \newline\indent
Mathematics Department, Technion --- Israel Institute of Technology,
Haifa 32000, Israel}

\email{posic@mccme.ru}

\begin{abstract}
 We propose a construction of a tensor exact category $\F_X^m$ of
Artin--Tate motivic sheaves with finite coefficients $\Z/m$ over
an algebraic variety $X$ (over a field $K$ of characteristic prime
to~$m$) in terms of \'etale sheaves of $\Z/m$\+modules over~$X$.
 Among the objects of $\F_X^m$, in addition to the Tate motives
$\Z/m(j)$, there are the cohomological relative motives with compact
support $\M_\cc^m(Y/X)$ of varieties $Y$ quasi-finite over~$X$.
 Exact functors of inverse image with respect to morphisms of algebraic
varieties and direct image with compact supports with respect to
quasi-finite morphisms of varieties $Y\rarrow X$ act on
the exact categories~$\F_X^m$.
 Assuming the existence of triangulated categories of motivic
sheaves $\D\M(X,\allowbreak\Z/m)$ over algebraic varities $X$ over~$K$
and a weak version of the ``six operations'' in these categories,
we identify $\F_X^m$ with the exact subcategory in $\D\M(X,\Z/m)$
consisting of all the iterated extensions of the Tate twists
$\M_\cc^m(Y/X)(j)$ of the motives $\M_\cc^m(Y/X)$.
 An isomorphism of the $\Z/m$\+modules $\Ext$ between the Tate
motives $\Z/m(j)$ in the exact category $\F_X^m$ with the motivic
cohomology modules predicted by the Beilinson--Lichtenbaum \'etale
descent conjecture (recently proven by Voevodsky, Rost, et al.)\ holds
for smooth varieties $X$ over $K$ if and only if the similar
isomorphism holds for Artin--Tate motives over fields containing~$K$.
 When $K$ contains a primitive $m$\+root of unity, the latter
condition is equivalent to a certain Koszulity hypothesis,
as shown in our previous paper~\cite{mat}.
\end{abstract}

\maketitle

\setcounter{tocdepth}{1}
\tableofcontents

\section*{Introduction}
\medskip

 The motivic cohomology of an algebraic variety $X$ over a field $K$
with coefficients in a ring $k=\Z$, \ $\Z/m$, \ $\Q$, \dots\ can be
defined~\cite{Voev-triang,Voev-chow,Voev-simpl} as
$$
 H_\M^i(X,k(j)) = \Hom_{\D\M(X,k)}(k,k(j)[i])
 = \Hom_{\D\M(K,k)}(\M_h^k(X),k(j)[i]),
$$
where $\D\M(K,k)$ is the derived category of mixed motives over $K$
with coefficients in~$k$ and $\M_h^k(X)\in\D\M(K,k)$ is
the homological (covariant) motive of $X$ over~$K$, while $\D\M(X,k)$
is the derived category of motives (motivic sheaves) over $X$
with coefficients in~$k$.
 The motivic cohomology localize in the Zariski topology, so one
has
$$
 H_\M^i(X,k(j)) = \H^i_\Zar(X,\usm{k(j)}),
$$
where $\usm{k(j)}$ are certain complexes of sheaves of $k$\+modules
on the big Zariski site of varieties over~$K$
(defined naturally up to a quasi-isomorphism).
 Furthermore, one has $\usm{k(j)}=k\ot^{\mathbb L}_\Z\usm{\Z(j)}$.
 Here $\H_\Zar$ denotes the hypercohomology of complexes of
sheaves in the Zariski topology and $\ot^{\mathbb L}_\Z$ is
the notation for the left derived functor of tensor product of
sheaves over~$\Z$.

 Now assume that the variety $X$ is smooth and the field $K$ is
perfect of characteristic not dividing a positive integer~$m$.
 Then the motivic cohomology of $X$ with coefficients in $\Z/m$
can be computed by the formula
\begin{equation}  \label{mot-coh-fin-coef}
 H_\M^i(X,\Z/m(j)) = \H^i_\Zar(X,\tau_{\le j}
 \R\pi_*\mu_m^{\ot j}),
\end{equation}
where $\pi\:\Et\rarrow\Zar$ is the natural map between the big \'etale
and Zariski sites of varieties over~$K$ (the direction of
the map of sites being opposite to that of the functor between
the categories of ``open sets''), $\R\pi_*$ denotes the right derived
functor of direct image of sheaves, $\tau_{\le j}$ are the canonical
truncations of complexes of Zariski sheaves, and the cyclotomic
\'etale sheaves $\mu_m^{\ot j}$ are the tensor powers over $\Z/m$ of
the \'etale sheaf~$\mu_m$ of $m$\+roots of unity.
 In fact, the formula~\eqref{mot-coh-fin-coef} is a combination of
two assertions: the Beilinson--Lichtenbaum modified \'etale descent
rule
\begin{equation} \label{BL-etale-descent}
 \usm{\Z(j)} = \tau_{\le j}\mathbb R\pi_*\pi^*\usm{\Z(j)}
 = \tau_{\le j+1}\R\pi_*\pi^*\usm{\Z(j)},
\end{equation}
and (a version of) the Suslin rigidity theorem
\begin{equation} \label{Sus-rigidity}
 \pi^*\usm{\Z/m(j)}=\mu_m^{\ot j}, \qquad j\ge0.
\end{equation}

 The isomorphism~\eqref{Sus-rigidity} goes back to A.~Suslin's
paper~\cite{Sus-acl}; its proof in the form stated above can be
found in~\cite[7.20 and~10.3]{MVW}.
 The rules~(\ref{mot-coh-fin-coef}\+-\ref{BL-etale-descent})
were conjectured by A.~Beilinson~\cite[5.10.D(v-vi)]{Beil}
and S.~Lichtenbaum~\cite{Lich}.
 According to~\cite{SV,GL}, the formula~\eqref{mot-coh-fin-coef}
follows from the Milnor--Bloch--Kato conjecture, the long work on
the proof of which was recently finished by Voevodsky, Rost,
et al.~\cite{Voev-MBK}.
 One can replace the Zariski topology with the Nisnevich topology
in these results~\cite[13.9 and~22.2]{MVW}.

 For singular varieties $X$, the formula~\eqref{mot-coh-fin-coef}
no longer holds, as one can see in the following simple example.
 Let $X$ be the affine line (say, over the field $K$ of complex
numbers) with two different points glued together.
 Then one has $\M_h^\Z(X)=\Z\oplus \Z[1]$, so $H^1_\M(X,\Z/m(0))=
\Z/m$, while $\tau_{\le0}\mathbb R\pi_*\Z/m=\Z/m$ and
$H^1_\Zar(X,\Z/m)=0$.
 In fact, the restriction of $\pi^*\usm{\Z/m(0)}$ to the small
\'etale site of $X$ is still isomorphic to $\Z/m$, since our
first motivic cohomology class of $X$ with coefficients in $\Z/m$
dies in \'etale covers.
 So it must be the modified \'etale descent
rule~\eqref{BL-etale-descent} for the Zariski topology
that breaks down in this case.

 For the above singular curve~$X$, the formula~\eqref{mot-coh-fin-coef}
can still be saved by replacing the Zariski topology with
the Nisnevich topology, as one has $H^1_\Nis(X,\Z/m)=\Z/m$.
 However, for more complicated singularities the Nisnevich
topology is not enough either, and the \emph{cdh} topology is
needed~\cite[Theorem~14.20]{MVW}.
 E.~g., if $Y$ is a normal surface with a point singularity such
that the exceptional fiber of its resolution is
a self-intersecting projective line, then one has
$H^2_\M(Y,\Z/m(0))=\Z/m$, while $\tau_{\le0}\mathbb R\rho_*\Z/m=\Z/m$
and $H^2_\Nis(Y,\Z/m)=0$ ($\rho$~being the natural map $\Et\rarrow
\Nis$ between the big \'etale and Nisnevich sites)
\cite[Exercise~12.32]{MVW}.
 Now it is the formula~\eqref{Sus-rigidity} that breaks down.

 The formula~\eqref{mot-coh-fin-coef} suggests that it might be
possible to construct the derived category of motivic sheaves
$\D\M(X,\Z/m)$ on a variety $X$, or at least some parts of
this category, in terms of the \'etale topology of $X$.
 The aim of this paper is to suggest such a construction for
the triangulated category $\D\M\A\T(X,\Z/m)$ of \emph{Artin--Tate
motivic sheaves} over~$X$.
 This is defined as the full triangulated tensor subcategory of
$\D\M(X,\Z/m)$ generated by Tate motives $\Z/m(j)$ and the compactly
supported relative cohomological motives of varieties $Y$
quasi-finite over~$X$.
 In fact, we construct a $\Z/m$\+linear exact category of filtered
constructible \'etale sheaves of $\Z/m$\+modules $\F^m_X$ over $X$,
whose derived category $\D^b(\F^m_X)$ is similar to
$\D\M\A\T(X,\Z/m)$ ``insofar as no complicated singularities
get involved''.

 We also establish some functoriality properties of the exact
categories $\F_X^m$ with respect to morphisms of varieties
$f\:Y\rarrow X$.
 Namely, we construct exact functors of inverse image
$f^*\:\F_X^m\rarrow\F_Y^m$ for all morphisms~$f$ and exact functors
of direct image with compact supports $f_!\:\F_Y^m\rarrow\F_X^m$
for quasi-finite morphisms~$f$.
 The functors $f^*$ and~$f_!$ are adjoint to each other from
different sides depending on whether $f$~is finite or \'etale.
 For each quasi-finite morphism of varieties $Y\rarrow X$ we define
the relative cohomological motive with compact supports
$\M_\cc^m(Y/X)\in\E_X^m$ of $Y$ over~$X$.
 Here $\E_X^m\subset\F_X^m$ is the full exact subcategory of
\emph{Artin motivic sheaves}.
 For any quasi-finite morphism of smooth varieties $Y\rarrow X$
we have the relative homological motive $\M_h^m(Y/X)\in\D^b(\F_X^m)$.

 Let us first discuss the case when $X=\Spec L$ is the spectrum
of a field.
 In this case, the Tate twists of the motives of the spectra of
finite separable extensions of $L$ generate (using iterated
extensions) an exact subcategory $\M\A\T(L,\Z/m)\subset
\D\M\A\T(L,\Z/m)$ which was computed in~\cite{mat} in terms of
the absolute Galois group $G_L$ of the field~$L$.
 The category $\M\A\T(L,\Z/m)$ is equivalent to the exact category
of finitely filtered discrete $G_L$\+modules over $\Z/m$ whose
successive quotient modules are cyclotomically twisted (finitely
generated) permutational modules.

 The triangulated category $\D\M\A\T(L,\Z/m)$ is equivalent to
the derived category $\D^b\M\A\T(L,\Z/m)$ of the exact category
$\M\A\T(L,\Z/m)$ if and only if the natural maps from
the $\Z/m$\+modules of Yoneda $\Ext$ between the objects of
the exact category to the modules of higher $\Hom$ between
the same objects in the triangulated category are isomorphisms.
 The latter property can be called the \emph{$K(\pi,1)$\+conjecture
for Artin--Tate motives over $L$ with coefficients\/ $\Z/m$}
(cf.\ the discussion of Tate motives with rational coefficients
in~\cite{BK}).
 In the case when $L$ contains a primitive $m$\+root of unity,
this conjecture has been interpreted in~\cite{mat} as a certain
Koszulity hypothesis for the ``big graded ring'' of diagonal
$\Hom$ in $\D\M\A\T(L,\Z/m)$.
 Another name for the $K(\pi,1)$\+conjecture is the \emph{silly
filtration conjecture}.

 This paper purports to explain how to ``globalize''
the $K(\pi,1)$\+conjecture to smooth varieties, in the particular
case of Artin--Tate motives with finite coefficients.
 (Note that we do not know how to globalize the more conventional
$K(\pi,1)$\+conjecture for Tate motives.)
 We proceed in two steps.
 First of all, we construct natural maps from the $\Z/m$\+modules
$\Ext_{\F_X^m}^i(\Z/m,\Z/m(j))$ to the ``naive'' motivic cohomology
modules $\H_\Nis^i(X,\tau_{\le j}\R\rho_*\mu_m^{\ot j})$, and
show that these are isomorphisms (for $X$ and all varieties \'etale
over~$X$) if and only if the $K(\pi,1)$\+conjecture holds for
Artin--Tate motives over the residue fields of
the scheme points of~$X$.

 Secondly, we assume the existence of reasonably well-behaved
triangulated categories of motivic sheaves $\D\M(X,\Z/m)$ over
algebraic varieties $X$ over~$K$ and identify the exact category
$\F_X^m$ with the full subcategory $\M\A\T(X,\Z/m)\subset
\D\M(X,\Z/m)$ generated, using iterated extensions, by the Tate
twists $\M_\cc^m(Y/X)(j)$ of the compactly supported relative
cohomological motives of varieties $Y$ quasi-finite over $X$,
with its induced exact category structure.
 Note that the construction of this fully faithful functor and
equivalence of exact categories does not yet depend on
the $K(\pi,1)$\+conjectures of any kind.

 The $\Z/m$\+modules of higher $\Hom$ in $\D\M(X,\Z/m)$ between
objects of $\M\A\T(X,\allowbreak\Z/m)$ may differ from the modules
$\Ext$ computed in $\F_X^m = \M\A\T(X,\Z/m)$, but, assuming
the $K(\pi,1)$\+conjecture for Artin--Tate motives over fields,
this only happens for singularities-related reasons.
 In particular, in the mentioned assumptions, the groups $\Ext$ in
$\F_X^m$ coincide with groups of higher $\Hom$ in $\D\M\A\T(X,\Z/m)$
when $X$ is a curve, as singularities of curves can be resolved by
finite morphisms.

 In other words, the triangulated subcategory
$\D\M\A\T(X,\Z/m)\subset\D\M(X,\Z/m)$ is equivalent to
the derived category $\D^b\M\A\T(X,\Z/m)$ when $X$ is a curve.
 Let us emphasize that the latter assertion is certainly \emph{not}
true for surfaces (not even for smooth surfaces over algebraically
closed fields, as the category $\M\A\T(X,\Z/m)$ for such a surface
$X$ contains objects related to surfaces $Y$ with bad enough
singularities mapping finitely onto~$X$).
 However, one has $\Ext^i_{\F_X^m}(M,\M_\cc^m(Y/X)(j))\simeq
\Hom_{\D\M\A\T(X,\Z/m)}(M,\M_\cc^m(Y/X)(j)[i])$ for any smooth
variety $Y$ finite over a variety~$X$ and any object
$M\in\F_X^m$.

 We also prove the basic properties of the relative homological
motives $\M_h^m(Y/X)\allowbreak\in\D^b(\F_X^m)$ of smooth varieties $Y$
quasi-finite over a fixed smooth variety $X$ using some of
the conventional assumptions about the triangulated categories of
motivic sheaves $\D\M(X,\Z/m)$.
 In particular, the group $\Hom_{\D^b(\F_X^m)}(\M_h^m(Y/X),\.
\Z/m(j)[i])$ is identified with the group $\Hom_{\D^b(\F_Y^m)}
(\Z/m,\Z/m(j)[i])$ and the motivic cohomology group
$\Hom_{\D\M(Y,\Z/m)}(\Z/m,\Z/m(j)[i])$ (assuming, as above,
the $K(\pi,1)$\+conjecture for Artin--Tate motives over fields).

 Thus our category $\F_X^m$ is proposed as a solution to the problem
of ``constructing explicitly the category of `fine \'etale
$\Z/l^n$\+sheaves'\,'' posed in~\cite[5.10.D(vi)]{Beil}.

 To conclude, let us try to explain why we think it took about
quarter-century to arrive to this solution of a problem of
Beilinson.
 The explanation is that while the formulation of the problem looks
elementary, the solution is not.
 For one thing, it is based on modern concepts in the theory
of motives, such as the Nisnevich and cdh~topologies, which did
not exist at the time of~\cite{Beil}.
 Perhaps even more importantly, it is based on new concepts in
homological algebra.

 The observation that the $K(\pi,1)$/silly filtration conjecture
for Tate motives with finite coefficients is related to the Koszulity
hypothesis for Galois cohomology~\cite{PV} was first reported
in~\cite{ober}.
 At that time, this result was only applicable to Tate rather than
Artin--Tate motives.
 However, the necessity to use Nisnevich (rather than just Zariski)
topology in the main argument of this paper makes it only working in
the generality of Artin--Tate motives.
 Interpreting the silly filtration conjecture for Artin--Tate motives
as a kind of Koszulity hypothesis required generalizing the notion
of Koszulity to nonnegatively graded rings that are not flat over
their zero-degree component.
 That was only achieved in~\cite{mat}.

 Finally, the very construction of the exact category of Artin--Tate
motivic sheaves in this paper is a far-reaching generalization of
the Galois-theoretic construction of the category of Tate motives
with finite coefficients over a field in~\cite{ober}
and~\cite[Section~2]{mat}.
 The proof of the embedding theorem in this paper is also based on
techniques of working with exact categories that appeared in~\cite{mat}.

\subsection*{Acknowledgment}
 I am grateful to Mikhail Bondarko, Tyler Lawson, Dustin Clausen,
Brian Conrad, and Angelo Vistoli for stimulating discussions and
helpful consultations, some of which took place on the web site
\texttt{MathOverflow.net}$\.$.
 I~would like to thank Alexander Kuznetsov and Miles Reid who kindly
answered my questions in Algebraic Geometry.
 Quite separately, I wish to express my gratitude to Vadim Vologodsky,
to whom I owe my understanding of the motivic cohomology of
singular varieties.
 Needless to say, any possible errors are mine.
 The author was partially supported by a grant from P.~Deligne 2004
Balzan prize in mathematics, a Simons Foundation grant, and an RFBR
grant while working on this paper.

\Section{Exact Category of Artin--Tate Motivic Sheaves}
\label{exact-at}

 All schemes in this paper are presumed to be separated.
 We fix an integer $m\ge2$, and a perfect field $K$ of characteristic
not dividing~$m$.
 By an (\emph{algebraic}) \emph{variety} over~$K$ we mean
a scheme of finite type over $\Spec K$, which is not distinguished
from its maximal reduced closed subscheme.
 For the purposes of notation and terminology related to
the dimensions, all smooth varieties are presumed to be
equidimensional.

 Recall that for any Noetherian scheme $X$ the category
$\Et^{m,\infty}_X$ of \'etale sheaves of $\Z/m$\+modules over $X$ is
a locally Noetherian Grothendieck abelian category.
 In other words, $\Et^{m,\infty}_X$ is equivalent to the category of
ind-objects in the abelian category $\Et^m_X$ of constructible \'etale
sheaves of $\Z/m$\+modules over~$X$.
 Here an \'etale sheaf is called \emph{constructible} if it is
generated by a finite set of its sections (over some \'etale schemes
of finite type over $X$).
 This is equivalent to the existence of a finite stratification of $X$
by locally closed subschemes, in restriction to which the sheaf is
locally constant (lisse) with finitely generated stalks.
 An \'etale sheaf of $\Z/m$\+modules is constructible if and only if
it is a Noetherian object in the category of \'etale
sheaves~\cite[Arcata~IV.3]{SGA41/2}.

 For any algebraic variety $X$ over $K$, consider the following
exact category $\E_X^m$.
 The objects of $\E_X^m$ are constructible \'etale sheaves of
$\Z/m$\+modules $N$ over $X$ such that for any scheme point $x\in X$
with the residue field $K_x$ the stalk $N_x$ of $N$ over~$x$,
considered as a discrete module over the absolute Galois group
$G_{K_x}$, is a (finitely generated) permutational module with
coefficients in~$\Z/m$.
 In other words, the $\Z/m$\+module $N_x$ must admit a system of
free generators that is preserved as a set (permuted) by the action
of~$G_{K_x}$.
 As an additive category, $\E_X^m$ is a full subcategory in $\Et_X^m$.
 The exact triples in $\E_X^m$ are the short exact sequences of \'etale
sheaves from $\E_X^m$ for which the related short sequences of
stalks at~$x$ are \emph{split} short exact sequences of
$G_{K_x}$\+modules over $\Z/m$ for all the scheme points $x\in X$.

 The category $\E_X^m$ is suggested as our candidate for the role of
the \emph{exact category of Artin motivic sheaves} over~$X$.
 The larger exact category $\F_X^m$ of \emph{Artin--Tate motivic
sheaves} is constructed on the basis of $\E_X^m$ and $\Et_X^m$ in
the following way.

 The objects of $\F_X^m$ are filtered \'etale sheaves of
$\Z/m$\+modules $(N,F)$ over $X$ with a finite decreasing filtration
by \'etale subsheaves $F^jN$, \ $j\in\Z$, \ $F^jN=N$ for $j\ll0$ and
$0$ for $j\gg0$.
 The successive quotient sheaves $\gr^j_FN=F^jN/F^{j+1}N$ must be
isomorphic to the tensor products over $\Z/m$ of \'etale sheaves
from $\E_X^m$ with the cyclotomic \'etale sheaves $\mu_m^{\ot j}$.
 The latter are the inverse images to $X$ of the \'etale sheaves
over $\Spec K$ corresponding to the cyclotomic representations of
the Galois group $G_K$.
 So one must have $\gr_F^jN\ot_{\Z/m}\mu_m^{\ot -j}\in\E_X^m$.
 The morphisms in $\F_X^m$ are the filtration-preserving morphisms of
\'etale sheaves of $\Z/m$\+modules.
 The exact triples in $\F_X^m$ are the pairs of morphisms with zero
composition whose successive quotients with respect to the filtration
$F$ are exact triples in $\E_X^m$ twisted with $\mu_m^{\ot j}$.
 In other words, it is required that the successive quotients form
short exact sequences of \'etale sheaves whose stalks over every
scheme point $x\in X$ are split exact triples of $G_{K_x}$\+modules.
 
 The exact categories $\E_X$ and $\F_X$ have natural structures of
associative, commutative, and unital tensor categories with exact
functors of tensor product.
 These are given by the tensor products of \'etale sheaves over $\Z/m$
and the tensor products of filtrations.
 The \emph{Tate motive} $\Z/m(j)\in\F_X^m$ is the \'etale sheaf
$\mu_m^{\ot j}$ over $X$, placed in the filtration component~$j$,
so that $F^j\Z/m(j)=\Z/m(j)$ and $F^{j+1}\Z/m(j)=0$.
 The objects $\Z/m(j)$ are invertible in the tensor category~$\F_X^m$;
there are natural isomorphisms $\Z/m(i)\ot \Z/m(j)=\Z/m(i+j)$.
 For any object $N\in\F_X^m$ the tensor product $N\ot\Z/m(j)$ is
denoted by $N(j)$ and called the \emph{Tate twist} of $N$ by~$j$.
 The category $\E_X^m$ is naturally a full exact tensor subcategory
in $\F_X^m$ consisting of all the objects concentrated in
the filtration component~$0$.

 Given an integer $n$ dividing~$m$, and any variety $X$ over $K$,
there is a natural tensor exact functor $\F_X^m\rarrow\F_X^n$,
assigning to a filtered \'etale sheaf of $\Z/m$\+modules $(M,F)$
the \'etale sheaf of $\Z/n$\+modules $M/n\simeq (m/n)M$ with
the induced filtration.

\Section{Inverse Image and Direct Image with Compact Supports}
\label{inverse-direct-section}

 Let $f\:Y\rarrow X$ be a morphism of algebraic varieties over~$K$.
 Then the inverse image functor $f^*\:\Et_X^m\rarrow\Et_Y^m$ takes
$\E_X^m$ to $\E_Y^m$ and is exact as a functor between these exact
categories (and as a functor between the abelian categories
$\Et_X^m$ and $\Et_Y^m$, too).
 The functor $f^*$ is also a tensor functor taking $\mu_m$
to~$\mu_m$, and in particular commuting with the cyclotomic twists.
 Hence there is the induced exact functor $f^*\:\F_X^m\rarrow\F_Y^m$.
 It takes the Tate motives $\Z/m(j)$ over $X$ to the similar Tate
motives $\Z/m(j)$ over $Y$ and commutes with the tensor products in
$\F_X$ and $\F_Y$.

 Let $f\:Y\rarrow X$ be a quasi-finite morphism of varieties
over~$K$.
 Then the functor of direct image with compact supports $f_!\:
\Et_Y^m\rarrow\Et_X^m$ is exact.
 For any \'etale sheaf of $\Z/m$\+modules $N$ over $Y$ the stalk
of the sheaf $f_!\.N$ over a scheme point $x\in X$ is isomorphic to
the direct sum of the direct images of the stalks of $N$ over
the points $y\in Y$ such that $f(y)=x$ with respect to the finite
morphisms of spectra of fields $y\to x$ \cite[Arcata~IV.5]{SGA41/2}.
 On the level of discrete modules over the absolute Galois groups
of the fields $K_x$ and $K_y$, the direct image is the functor of
induction from the open subgroup $G_{K_y}\subset G_{K_x}$.
 
 Hence the functor~$f_!$ takes $\E_Y^m$ to $\E_X^m$ and induces
an exact functor between these exact categories.
 According to the projection formula, the functor $f_!$ also commutes
with the twists by (locally constant constructible) sheaves lifted
from $X$, and in particular from $\Spec K$.
 Thus there is the induced exact functor $f_!\:\F_Y^m\rarrow\F_X^m$,
commuting with the Tate twists.

 For any \'etale morphism of algebraic varieties $f\:Y\rarrow X$,
the functor $f_!\:\F_Y^m\rarrow\F_X^m$ is left adjoint to the functor
$f^*\:\F_X^m\rarrow\F_Y^m$.
 For any finite morphism $f\:Y\rarrow X$, the functor $f_!\:\F_Y^m
\rarrow\F_X^m$ is right adjoint to the functor
$f^*\:\F_X^m\rarrow\F_Y^m$.
 These adjunctions are induced by the similar adjunctions of functors
between the categories $\Et_X^m$ and $\Et_Y^m$.

 For any quasi-finite morphism $Y\rarrow X$, the functors $f_!$
and~$f^*$ between the exact categories $\F_X^m$ and $\F_Y^m$ satisfy
the projection formula with respect to the tensor products over~$\Z/m$.
 These functors also commute with each other in any base change
situation in which they are all defined.

\begin{lem}  \label{ext-adjunction}
 For any pair of adjoint exact functors between exact categories
$\E'\rarrow\E''$ and\/ $\E''\rarrow\E'$ the induced pair of
triangulated functors between the derived categories $\D^b(\E')$
and $\D^b(\E'')$ are naturally adjoint.
 The same assertion holds for unbounded derived categories.
\end{lem}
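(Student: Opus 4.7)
The plan is to lift the unit and counit of the original adjunction to the derived categories and to check the triangle identities there. Let $F\:\E'\rarrow\E''$ be the left adjoint and $G\:\E''\rarrow\E'$ the right adjoint, with unit $\eta\:\id_{\E'}\rarrow GF$ and counit $\epsilon\:FG\rarrow\id_{\E''}$.

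First I would check that $F$ and $G$ induce triangulated functors on the derived categories. Applied termwise to complexes, any exact functor between exact categories preserves conflations and therefore preserves acyclic complexes (an acyclic complex being, by the standard definition, one whose differentials admissibly factor through conflations). Hence the termwise extensions of $F$ and $G$ between the homotopy categories descend through the Verdier localization at acyclic complexes to give triangulated functors $\D^b(\E')\rarrow\D^b(\E'')$ and $\D^b(\E'')\rarrow\D^b(\E')$, and the same termwise construction works for the unbounded derived categories.

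Second, I would extend $\eta$ and $\epsilon$ to natural transformations of the induced functors on derived categories. The components $\eta_N\:N\rarrow GFN$ and $\epsilon_M\:FGM\rarrow M$ are defined on every object and, by naturality, commute termwise with the differentials, so they assemble into natural transformations of functors on the categories of complexes. These descend to natural transformations on the homotopy categories, and hence on the derived categories. The triangle identities $(\epsilon F)\circ(F\eta)=\id_F$ and $(G\epsilon)\circ(\eta G)=\id_G$ hold in $\E'$ and $\E''$, hence termwise on complexes, hence in the derived categories.

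From the lifted unit and counit together with the triangle identities, one deduces the adjunction isomorphism $\Hom_{\D^b(\E'')}(FX,Y)\simeq\Hom_{\D^b(\E')}(X,GY)$ by the formal rule $\phi\mapsto G\phi\circ\eta_X$ with inverse $\psi\mapsto\epsilon_Y\circ F\psi$, this being a purely categorical consequence of having unit, counit, and the triangle identities. The only step that requires genuine verification is the first one --- that an exact functor between exact categories preserves acyclicity of complexes --- but this is immediate from the definition of an acyclic complex as a termwise assembly of conflations. Nothing in the argument is sensitive to boundedness, so the unbounded case follows by the same reasoning.
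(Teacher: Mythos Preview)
Your argument is correct. The paper takes a slightly more abstract route: rather than lifting the unit and counit by hand, it invokes a general principle about Verdier quotients --- namely that a pair of adjoint triangulated functors $\mathcal H'\leftrightarrows\mathcal H''$ which carry triangulated subcategories $\mathcal A'$, $\mathcal A''$ into each other induce an adjoint pair on the quotients $\mathcal H'/\mathcal A'$ and $\mathcal H''/\mathcal A''$. Your proof is essentially what one would do to \emph{prove} that general principle in this particular case (with $\mathcal H=K^b(\E)$ and $\mathcal A$ the acyclics): the unit and counit of the adjunction on the homotopy categories descend to the quotient and the triangle identities persist. The paper's formulation has the advantage of isolating a reusable statement; yours has the advantage of being self-contained. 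Both rest on the same key observation, which you identify correctly: an exact functor between exact categories preserves acyclic complexes because acyclicity is defined in terms of conflations.
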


\begin{proof}
 This follows from a general result about adjunctions and quotients
for triangulated categories.
 For any pair of adjoint functors between triangulated categories
$\mathcal H'$ and $\mathcal H''$ taking their triangulated
subcategories $\mathcal A'$ and $\mathcal A''$ into each other,
the induced functors between the quotient categories $\mathcal H'/
\mathcal A'$ and $\mathcal H''/\mathcal A''$ are adjoint.
\end{proof}

\Section{Relative Motives}  \label{relative-motives-secn}

 For any quasi-finite morphism $f\:Y\rarrow X$ of varieties over $K$,
the \emph{cohomological relative motive of\/ $Y$ over $X$ with
compact supports}, defined as $\M_\cc^m(Y/X)=f_!(\Z/m)$, is an object
of the exact subcategory $\E_X^m$ in $\F_X^m$.
 The relative motive $\M_\cc^m(Y/X)\in\E_X^m$ is contravariantly
functorial with respect to finite morphisms of varieties $Y$
quasi-finite over $X$ and covariantly functorial with respect to
\'etale\footnote{
 In fact, the motive $\M_\cc^m(Y/X)\in\E_X^m$ is even
covariantly functorial with respect to flat morphisms $g\:Z\rarrow Y$
of varieties quasi-finite over $X$, since for any flat
quasi-finite morphism $g$ there is a natural map
$g_!\,\Z/m\rarrow\Z/m$ of \'etale sheaves over~$Y$.
 However, when the schemes $Y$ and $Z$ are not necessarily reduced,
this map depends on their non-reduced structures
(i.~e., it is \emph{not} determined by the induced morphism of
the maximal reduced closed subschemes).
 When $g$~is a finite flat morphism, the composition
$\M_\cc^m(Y/X)\rarrow\M_\cc^m(Z/X)\rarrow\M_\cc^m(Y/X)$ is
the multiplication with the degree of the morphism~$g$, defined
as the rank of the locally free sheaf $g_*\mathcal O_Z$ over~$Y$.
}
morphisms of varieties $Y$ quasi-finite over~$X$.
 For any quasi-finite morphism $Y\rarrow X$ and any closed subvariety
$Z\subset Y$ there is a natural short exact sequence
\begin{equation} \label{compact-supports-open-closed}
 0\lrarrow\M_\cc^m((Y\backslash Z)/X)\lrarrow \M_\cc^m(Y/X)
 \lrarrow\M_\cc^m(Z/X)\lrarrow 0
\end{equation}
in $\E_X^m\subset\F_X^m$.

 For any quasi-finite morphism of smooth varieties $f\:Y\rarrow X$,
the \emph{homological relative motive of\/ $Y$ over $X$} is
the object
$$
 \M_h^m(Y/X)=\M_\cc^m(Y/X)(\dim Y/X)[2\dim Y/X]
$$
in the derived category $\D^b(\F_X^m)$, where $\dim Y/X = \dim Y
- \dim X$ is the relative dimension.
 For any quasi-finite morphism of smooth varieties $Y\rarrow X$
and a smooth closed subvariety $Z\subset Y$ there is a natural
distinguished triangle
\begin{equation} \label{homological-open-closed}
 \M_h^m((Y\backslash Z)/X)\rarrow\M_h^m(Y/X)\rarrow
 \M_h^m(Z/X)(c)[2c]\rarrow\M_h^m((Y\backslash Z)/X)[1]
\end{equation}
in $\D^b(\F_X^m)$, where $c=-\dim Z/Y$ is the codimension of $Z$
in~$Y$.

 If $Y'\rarrow X$ and $Y''\rarrow X$ are two quasi-finite morphisms
of algebraic varieties and $Y=Y'\times_X Y''$, then
$\M_\cc^m(Y)=\M_\cc^m(Y')\ot\M_\cc^m(Y'')$ in $\E_X^m\subset\F_X^m$.
 If $Y'\rarrow X$ and $Y''\rarrow X$ are two quasi-finite morphisms
of smooth varieties with transversal singularities, i.~e.,
the variety $Y=Y'\times_X Y''$ is smooth and $\dim Y/X = \dim Y'/X
+ \dim Y''/X$, then $\M_h^m(Y)=\M_h^m(Y')\ot\M_h^m(Y'')$ 
in $\D^b(\F_X^m)$. 

 The basic results justifying the definition of the relative
homological motive $\M_h^m(Y/X)$ will be obtained below in
Section~\ref{homol-motives-secn}.

\begin{lem} \label{finite-etale-over-locally-closed}
 For any algebraic variety $X$, the exact category $\E_X^m$ is
generated by the objects $\M_\cc^m(Y/X)$, where $Y$ runs over all
varieties finite and \'etale over smooth locally closed subvarieties
of $X$, using the (iterated) operation of passage to an extension.
 The exact category $\F_X^m$ is generated by the objects
$\M_\cc^m(Y/X)(j)$, with $Y$ as above and $j\in\Z$, in
the same way.
\end{lem}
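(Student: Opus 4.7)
The plan is to argue by Noetherian induction on $X$, after first reducing from $\F_X^m$ to $\E_X^m$ by peeling off the filtration. For $(N,F)\in\F_X^m$ the short sequences $0\to F^{j+1}N\to F^jN\to\gr_F^jN\to 0$ are exact triples in $\F_X^m$, since on associated graded in each degree~$j'$ one outer term vanishes and the sequence splits trivially. Thus $(N,F)$ is an iterated extension of its subquotients $\gr_F^jN$, each of which has the form $M_j\ot\Z/m(j)$ with $M_j\in\E_X^m$; since tensor product with $\Z/m(j)$ is an exact autoequivalence of $\F_X^m$, the statement for $\F_X^m$ reduces to the statement for $\E_X^m$.

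For $N\in\E_X^m$ I would then do Noetherian induction on $X$. Since $K$ is perfect and $N$ is constructible, I can choose a dense open immersion $j\:U\rarrow X$ with $U$ smooth and $N|_U$ lisse (take the dense open where $N$ is lisse and intersect with the smooth locus); let $i\:Z\rarrow X$ be the closed complement. The standard recollement short exact sequence of \'etale sheaves
$$
0\lrarrow j_!(N|_U)\lrarrow N\lrarrow i_*(N|_Z)\lrarrow 0
$$
is in fact an exact triple in $\E_X^m$: at each scheme point $x\in X$ either the left or the right stalk vanishes, so the sequence of $G_{K_x}$-modules splits trivially. In particular $N|_U\in\E_U^m$ and $N|_Z\in\E_Z^m$.

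It remains to realize each of $j_!(N|_U)$ and $i_*(N|_Z)$ as an iterated extension of generators. For the open part, on each connected component of $U$ the lisse sheaf $N|_U$ with permutational stalks corresponds, under the Galois correspondence, to a continuous permutational $\Z/m$-representation of the \'etale fundamental group, equivalently to a finite continuous $\pi_1$-set, equivalently to a finite \'etale cover $f\:Y\rarrow U$; then $N|_U\simeq f_!(\Z/m)$ and $j_!(N|_U)=(j\circ f)_!(\Z/m)=\M_\cc^m(Y/X)$ is itself a generator (with $Y$ finite \'etale over the smooth locally closed $U\subset X$). For the closed part, the induction hypothesis expresses $N|_Z\in\E_Z^m$ as an iterated extension of objects $\M_\cc^m(Y'/Z)$ with $Y'$ finite \'etale over smooth locally closed subvarieties of $Z$; the exact functor $i_!=i_*$ converts these to $\M_\cc^m(Y'/X)$, noting that a smooth locally closed subvariety of $Z$ remains smooth and locally closed in $X$.

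The only point that requires attention beyond routine bookkeeping is the verification that the recollement sequence is an exact triple in $\E_X^m$ and not just a short exact sequence in $\Et_X^m$; as indicated, this is immediate from the vanishing of one of the outer stalks at every scheme point. The subsidiary Galois-theoretic identification of permutational lisse sheaves with finite \'etale covers decomposes into transitive orbits and reduces to the standard equivalence, so I do not anticipate a serious obstacle.
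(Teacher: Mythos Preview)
Your argument is correct and follows essentially the same route as the paper: reduce $\F_X^m$ to $\E_X^m$ via the filtration, then stratify (you phrase this as Noetherian induction, which amounts to the same thing), use the open--closed recollement triple, and identify the lisse piece on a smooth stratum with $f_!(\Z/m)$ for a finite \'etale cover~$f$. Your explicit check that the recollement sequence is admissible in $\E_X^m$ (one outer stalk vanishes at every point) is a nice touch that the paper leaves implicit.

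The one step you pass over is exactly the step the paper singles out: from ``the generic stalk is a permutational $G_{K_\eta}$-module'' you need to conclude that the corresponding $\pi_1^{\mathrm{\acute et}}(U)$-module is permutational, i.e.\ that the same basis is permuted by $\pi_1(U)$ and not merely by $G_{K_\eta}$. This requires the surjectivity of $G_{K_\eta}\twoheadrightarrow\pi_1^{\mathrm{\acute et}}(U)$ for smooth connected $U$, which the paper justifies by noting that open subvarieties of connected smooth varieties (in particular, of connected finite \'etale covers of~$U$) remain connected. Your final paragraph addresses the passage from a permutational $\pi_1$-module to a finite \'etale cover, but not this prior passage from permutational stalk to permutational $\pi_1$-module; once you insert that one sentence, the proof is complete.
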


\begin{proof}
 Given an object $N\in\E_X^m$, consider a stratification of $X$ by
smooth locally closed subvarieties such that the restriction of
$N$ to each stratum is a lisse \'etale sheaf.
 Then notice that whenever for a lisse \'etale sheaf $M$ of
$\Z/m$\+modules on a smooth connected variety $U$ the corresponding
module over the absolute Galois group of the generic point of $U$
is a permutational module with coefficients in $\Z/m$,
the \'etale sheaf $M$ is the direct image of $\Z/m$ from a finite
\'etale morphism into~$U$.
 This is so because the maps of \'etale fundamental groups induced
by open embeddings of connected smooth varieties are surjective
(since an open subvariety of a connected smooth covering variety is
connected).
 Now it remains to use the exact sequences of \'etale sheaves
related to the extension by zero from open subvarieties.
 The second assertion follows from the first one in the obvious way.
\end{proof}

\begin{lem}  \label{generated-by-etale}
 The exact category $\E_X^m$ is generated by the objects
$\M_\cc^m(Y/X)$, where $Y$ runs over all varieties
\'etale over $X$, using the operations of passage to the cokernel
of an admissible monomorphism and (iterated) extension.
 Furthermore, any object of $\E_X^m$ is the target of an admissible
epimorphism whose source is the object $\M_\cc^m(Y/X)$,
where $Y$ is a variety \'etale over~$X$.
\end{lem}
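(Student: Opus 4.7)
The strategy is to establish the generation statement first, and then the admissible-epimorphism statement.

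By Lemma~\ref{finite-etale-over-locally-closed}, it suffices to show that every $\M_\cc^m(Y/X)$ with $Y$ finite étale over a smooth locally closed $V \subset X$ lies in the extension-and-cokernel closure of $\{\M_\cc^m(W/X) : W \to X \text{ étale}\}$. Writing $V \subset U \subset X$ with $U$ open in $X$ and $V$ closed in $U$, the identity $\M_\cc^m(Y/X) = j_!\M_\cc^m(Y/U)$ for the open immersion $j\:U \hookrightarrow X$, together with the exactness of $j_!$ and its preservation of étale covers, reduces the problem to the case of $V$ closed in $X$.

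In the closed case, Hensel/Artin approximation gives, for each $v \in V$, an étale neighborhood $X_v \to X$ and a finite étale cover $\tilde Y_v \to X_v$ whose restriction to $V \times_X X_v$ matches the base change of $Y \to V$. By quasi-compactness of $V$, finitely many such neighborhoods assemble into a single étale $\tilde X \to X$ covering $V$, equipped with a finite étale cover $\tilde Y \to \tilde X$. Setting $V^\sim = V \times_X \tilde X$ and $Y^\sim = Y \times_V V^\sim = \tilde Y \times_{\tilde X} V^\sim$, the open--closed exact sequence
\[
0 \to \M_\cc^m((\tilde Y \setminus Y^\sim)/X) \to \M_\cc^m(\tilde Y /X) \to \M_\cc^m(Y^\sim /X) \to 0
\]
is admissible in $\E_X^m$ (stalks at points of $V$ have vanishing left term; stalks outside $V$ vanishing right term), with the outer terms of the form $\M_\cc^m(W/X)$ for $W$ étale over $X$; hence $\M_\cc^m(Y \times_V V^\sim /X)$ lies in the closure. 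The trace map $\M_\cc^m(Y\times_V V^\sim/X) \to \M_\cc^m(Y/X)$ induced by $V^\sim \to V$ is an admissible epimorphism in $\E_X^m$: the stalk-wise splitting at each $v \in V$ is provided by the Nisnevich-type fixed-geometric-point property built into the construction of each $X_v$. A short induction on the rank of the cover $V^\sim \to V$ shows the kernel of this trace also lies in the closure, whence so does $\M_\cc^m(Y/X)$. The hard part here is the Nisnevich splitting: although $\tilde X$ has a matching-residue-field point over every $v \in V$, these local sections are not globally coherent, so one has to carry the rank induction carefully on the non-section fibers.

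For the admissible-epi assertion, I stratify $X$ into smooth locally closed strata $V_\alpha$ on which $N$ is lisse, with orbit decomposition $N_{\eta_\alpha} = \bigoplus_j \Z/m[G_{K_{\eta_\alpha}}/H_{\alpha,j}]$ at each generic point $\eta_\alpha$, where $H_{\alpha,j}$ is the Galois group of a finite separable extension $L_{\alpha,j}/K_{\eta_\alpha}$. For each $(\alpha,j)$, Hensel-type lifting produces finitely many irreducible étale morphisms $g_{\alpha,j,i}\:W_{\alpha,j,i} \to X$ (jointly covering $V_\alpha$ by quasi-compactness), each with a generic point over a point of $V_\alpha$ of residue field $L_{\alpha,j}$, together with sections $s_{\alpha,j,i} \in N(W_{\alpha,j,i})$ realizing the corresponding orbit generator (after suitable shrinking to make the stalk element extend). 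Summing over $(\alpha,j,i)$ yields a map from $\M_\cc^m(W/X)$ onto $N$ with $W = \bigsqcup W_{\alpha,j,i}$. Stalk-wise admissibility follows because the constructed points give a $G_{K_x}$-equivariant sub-permutation-basis of the source mapping isomorphically onto that of $N_x$; the main obstacle is verifying this simultaneously at all non-generic scheme points, addressed by a noetherian induction on the stratification.
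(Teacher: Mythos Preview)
Your argument for the first assertion has a real gap. You assemble $\tilde X$ as a finite disjoint union of \'etale neighborhoods $X_{v_1},\dots,X_{v_n}$, each chosen so that $X_{v_i}$ has a $K_{v_i}$-rational point over its own $v_i$; but for a general point $v\in V$ not among the $v_i$, there is no reason any $X_{v_i}$ should carry a $K_v$-rational point over~$v$. Your parenthetical ``$\tilde X$ has a matching-residue-field point over every $v\in V$'' is exactly the unproved claim. Without this Nisnevich property of $V^\sim\to V$, the covariant-functoriality (``trace'') map $\M_\cc^m(Y\times_V V^\sim/X)\to\M_\cc^m(Y/X)$ is generally \emph{not} an admissible epimorphism in $\E_X^m$: on stalks one is looking at maps $\mathrm{Ind}_{H'}^{G}\Z/m\to\mathrm{Ind}_H^G\Z/m$ with $H'\subsetneq H$, and these are split as $G$-modules only when $[H:H']$ is invertible modulo~$m$ or when some summand has $H'=H$. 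Even if you repair this, the ``rank induction'' on the kernel is left entirely schematic; the kernel is not of the form $\M_\cc^m(-/X)$ and you do not say what structure on it drives the induction.

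The paper avoids all of this by working at a single generic point instead of covering~$V$. Given $Y$ finite \'etale over a locally closed $Z\subset X$ and a generic point $z$ of $Z$, the fiber $Y_z$ is a finite separable $K_z$-algebra; lifting the minimal polynomial of a primitive element arbitrarily to $\mathcal O_{X,z}$ gives a finite \'etale $\mathcal O_{X,z}$-algebra, which spreads out to an \'etale $W\to X$ with $W\times_X Z'\simeq Y\times_Z Z'$ for a dense open $Z'\subset Z$. The open--closed triple~\eqref{compact-supports-open-closed} then exhibits $\M_\cc^m(Y\times_Z Z'/X)$ as the cokernel of an admissible monomorphism between two motives of varieties \'etale over~$X$, and a plain Noetherian induction on $Z\setminus Z'$ finishes. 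No cover, no trace, no descent.

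Your treatment of the second assertion is along the right lines and close to the paper's one-line hint (identify $G_{K_x}$ with $\pi_1^{\text{\'et}}$ of the Henselization and present the latter as a filtered limit of varieties \'etale over~$X$). But note that your splitting argument is only set up at the generic points $\eta_\alpha$: the ``constructed points'' of the $W_{\alpha,j,i}$ lie over $\eta_\alpha$, and you still owe an explanation of why the resulting map is split surjective on stalks at \emph{every} $x\in V_\alpha$, not just at~$\eta_\alpha$. The Henselization argument handles this uniformly, one scheme point at a time.
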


\begin{proof}
 One deduces the first assertion from
Lemma~\ref{finite-etale-over-locally-closed} by showing that any
\'etale morphism into a locally closed subvariety $Z\subset X$
factors through an \'etale morphism into $X$, perhaps after $Z$
is replaced by its dense open subvariety.

 Passing to the local ring of a generic point of $Z$ in $X$, it
suffices to check that any \'etale morphism to the closed point of
a local scheme can be extended to an \'etale morphism to the whole
local scheme.
 This can be done by choosing a primitive element in a separable
field extension of the residue field and lifting the coefficients
of its irreducible equation to the local ring in an arbitrary way.
 Then it remains to use the exact
triple~\eqref{compact-supports-open-closed} and
the Noetherian induction.

 To prove the second assertion, one can identify the absolute Galois
group of the residue field of a scheme point of $X$ with the \'etale
fundamental group of the spectum of the Henselization of its local
ring and present the latter as a filtered projective limit of
varieties \'etale over~$X$.
\end{proof}

\begin{lem}  \label{generated-by-finite}
 The exact category $\E_X^m$ is generated by the objects
$\M_\cc^m(Y/X)$, where $Y$ runs over all normal varieties finite
over $X$, using the operations of passage to the kernel of
an admissible epimorphism and (iterated) extension.
\end{lem}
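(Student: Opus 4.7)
The plan is to argue by Noetherian induction on~$X$. The zero-dimensional case is essentially tautological: a finitely generated permutational Galois module is by definition a direct sum of induced modules $\Z/m[G_{K_x}/G_{K_y}]=\M_\cc^m(\Spec K_y/\Spec K_x)$ for finite separable extensions $K_y/K_x$, and these are normal finite $X$-schemes.

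For the inductive step, fix $N\in\E_X^m$ and choose a dense smooth open subvariety $U\subset X$ on which $N$ is lisse. The restriction $N|_U$ corresponds to a permutational $\pi_1(U)$-module and therefore equals $\M_\cc^m(V/U)$ for some finite \'etale cover $V\to U$. Writing $j\:U\hookrightarrow X$ and $i\:X\setminus U\hookrightarrow X$ for the open and closed inclusions, the short exact sequence
$$
 0\lrarrow j_!j^*N\lrarrow N\lrarrow i_*i^*N\lrarrow 0
$$
of \'etale sheaves is admissible in $\E_X^m$ because at every geometric point of $X$ one of the outer terms vanishes. By the inductive hypothesis applied to $X\setminus U$, together with exactness of $i_!=i_*$ and the fact that a normal finite $(X\setminus U)$-scheme is again a normal finite $X$-scheme, the term $i_*i^*N$ lies in the target subcategory. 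It therefore remains to show that $j_!j^*N=\M_\cc^m(V/X)$ does, where $V\to U\hookrightarrow X$ is regarded as an \'etale variety over~$X$.

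For this, apply Zariski's main theorem to factor $V\to X$ as an open immersion $V\hookrightarrow\bar V$ followed by a finite morphism $\bar V\to X$; after replacing $\bar V$ by the scheme-theoretic closure of $V$ we may assume $V$ is dense in~$\bar V$. Let $\pi\:\bar V^\nu\to\bar V$ be the normalization, which is finite because varieties over $K$ are excellent. Since $V$ is smooth (being \'etale over the smooth~$U$) and open in~$\bar V$, it lies in the smooth locus of $\bar V$ and $\pi$ restricts to an isomorphism over $V$; thus $V$ embeds as a dense open subvariety of the normal finite $X$-scheme~$\bar V^\nu$. The sequence~\eqref{compact-supports-open-closed} applied to $\bar V^\nu$ and its closed subvariety $Z=\bar V^\nu\setminus V$ gives
$$
 0\lrarrow\M_\cc^m(V/X)\lrarrow\M_\cc^m(\bar V^\nu/X)\lrarrow\M_\cc^m(Z/X)\lrarrow 0,
$$
admissible in $\E_X^m$ because each stalk decomposes as the direct sum of copies of $\Z/m$ indexed respectively by the preimages in $V$ and in $Z$, with the Galois action permuting the two sets of summands separately. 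The middle term is a generator. Since $V$ is dense in~$\bar V^\nu$, \ $\dim Z<\dim V\le\dim X$, so the scheme-theoretic image $W$ of $Z\to X$ is a proper closed subvariety of~$X$; writing $\M_\cc^m(Z/X)=(W\hookrightarrow X)_!\.\M_\cc^m(Z/W)$ and invoking the inductive hypothesis on~$W$, we obtain $\M_\cc^m(Z/X)$ in the target subcategory, hence also $\M_\cc^m(V/X)$ as the kernel of an admissible epimorphism between two of its objects.

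The only point requiring care---though it is not really an obstacle---is the stalkwise admissibility of the short exact sequences one writes down; in each case the required splitting on permutational bases is visible from an explicit decomposition of the stalks into sums of $\Z/m$ indexed by scheme points. The essential geometric input is Zariski's main theorem combined with finiteness of normalization for varieties over $K$, and the crucial organizational trick is to run the Noetherian induction through the closed image in $X$ of a quasi-finite $X$-scheme rather than through its source.
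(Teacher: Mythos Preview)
Your proof is correct and follows essentially the same route as the paper's: the paper invokes Lemma~\ref{finite-etale-over-locally-closed} to reduce to motives of varieties finite \'etale over smooth locally closed strata, then applies Zariski's main theorem and finiteness of normalization, which is exactly the geometric input you use. The only difference is organizational---you unfold the implicit induction in the paper's two-line sketch into an explicit Noetherian induction on $X$, re-deriving the needed part of Lemma~\ref{finite-etale-over-locally-closed} along the way rather than citing it.
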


\begin{proof}
 Use Lemma~\ref{finite-etale-over-locally-closed} and the fact
that any quasi-finite morphism of algebraic varieties is
the composition of an open embedding and a finite morphism
(Grothendieck's form of Zariski's main
theorem~\cite[Th\'eor\`eme~8.12.6]{EGAIV3}) together with
the fact that the normalization is a finite morphism.
\end{proof}

 Lemmas~\ref{generated-by-etale}--\ref{generated-by-finite} allow
to compute (at least, in principle) the $\Z/m$\+modules $\Ext$
in the exact categories $\E_X^m$ and $\F_X^m$ in terms of
the $\Z/m$\+modules $\Ext$ between the Tate motives $\Z/m(j)$ in
the categories $\E_Y^m$ and $\F_Y^m$ over some other varieties~$Y$,
using the adjunctions of the functors $f_!$ and~$f_*$.
 The problem is that the varieties $Y$ may turn out to be singular,
even if the original variety $X$ was smooth.
 On the other hand, it always suffices to know
$\Ext^*_{\F_Y^m}(\Z/m,\Z/m(j))$ for normal varieties~$Y$.
 These observations will be used in the proofs in
Sections~\ref{embedding-secn}\+-\ref{applications}.

\Section{Nisnevich Topology: Distinguished Pairs and Points}

 The aim of this section and the next one is to prepare
ground for the proof of the motivic cohomology comparison theorems
in Section~\ref{main-theorem-section}.

 It is explained in Appendix that one can construct a complex of
$\Z/m$\+modules $C^\bu_\E(M,N)$ with good functorial properties
computing the modules $\Ext_\E^*(M,N)$ for any two objects
$M$ and $N$ in a $\Z/m$\+linear exact category~$\E$.
 In this section we establish some properties of the complexes
computing $\Ext$ in the exact categories $\F_X^m$ with respect
to the Nisnevich topology on the category of algebraic varieties~$X$.

 Let $X$ be a variety over $K$ and let $M$ and $N$ be two
objects in the exact category~$\F_X^m$.
 To any morphism of varieties $f\:Y\rarrow X$ we assign
the complex of $\Z/m$\+modules $C^\bu_{M,N}(Y) =
C^\bu_{\F_Y^m}(f^*M,f^*N)$.
 The map $Y\longmapsto C^\bu_{M,N}(Y)$ is a complex of presheaves
on the category of algebraic varieties mapping into~$X$.

 A \emph{distinguished pair of morphisms} (for the Nisnevich
topology) \cite[Definition~12.5]{MVW} is a pair of morphisms
of varieties $U\rarrow Y$ and $Z\rarrow Y$ such that
$U\rarrow Y$ is an open embedding and $Z\rarrow Y$ is an \'etale
morphism which is an isomorphism over $Y\backslash U$, i.e.,
$(Y\backslash U)\times_Y Z\simeq Y\backslash U$.
 
\begin{lem}  \label{nisnevich-distinguished-pairs-lemma}
 Let $\upsilon\:U\rarrow Y$ and\/ $g\:Z\rarrow Y$ be a distinguished
pair of morphisms of varieties mapping into~$X$.
 Then the total complex of the bicomplex with three rows
$$
 C^\bu_{M,N}(Y)\lrarrow C^\bu_{M,N}(U)\oplus C^\bu_{M,N}(Z)\lrarrow
 C^\bu_{M,N}(U\times_Y Z)
$$
is acyclic.
\end{lem}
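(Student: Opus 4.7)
The plan is to use the adjunctions $(f_!, f^*)$ for \'etale morphisms (cf.\ Lemma~\ref{ext-adjunction}) at the chain-complex level to rewrite the three rows of the bicomplex as Hom-complexes into $N_Y$ over $Y$, identifying the totalization with $C^\bu_{\F_Y^m}(K^\bu, N_Y)$ for an explicit three-term Mayer--Vietoris complex $K^\bu$, and then to prove that $K^\bu$ is a short exact sequence in the exact category~$\F_Y^m$. Set $W = U\times_Y Z$, let $p\:W\to U$ and $q\:W\to Z$ denote the projections, put $h = \upsilon\circ p = g\circ q\:W\to Y$ (which is \'etale), and write $M_Y = f^*M$, $N_Y = f^*N$ where $f\:Y\to X$ is the structure map. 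After the adjunction rewriting---using the chain-level functoriality of the $C^\bu$ supplied by the Appendix---the bicomplex becomes $C^\bu_{\F_Y^m}(K^\bu, N_Y)$, where
$$
K^\bu\: \quad h_!h^*M_Y \lrarrow \upsilon_!\upsilon^*M_Y \oplus g_!g^*M_Y \lrarrow M_Y
$$
with differentials coming from the counits with the usual Mayer--Vietoris signs, as dictated by the two factorizations of~$h$.

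The heart of the argument is to show that $K^\bu$ is a short exact sequence in $\F_Y^m$. Compatibility with filtrations and Tate twists reduces this at once to the case of $\E_Y^m$, where one verifies (a) that the underlying sequence of \'etale sheaves of $\Z/m$-modules is exact, and (b) that at every scheme point $y\in Y$, the induced sequence of $G_{K_y}$-module stalks is a \emph{split} short exact sequence. Part~(a) is the standard Nisnevich Mayer--Vietoris property of the distinguished square, applied to \'etale sheaves (valid because Nisnevich covers are \'etale covers), checked on geometric stalks by splitting into cases according to whether the geometric point factors through $U$ and using the hypothesis $(Y\backslash U)\times_Y Z \simeq Y\backslash U$. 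Part~(b) proceeds by case analysis on the scheme point $y$: when $y\notin U$, the stalks of $\upsilon_!\upsilon^*M_Y$ and $h_!h^*M_Y$ vanish while the stalk of $g_!g^*M_Y$ is $M_y$ via the unique lift of~$y$ to $Z$, and the sequence collapses to the trivially split $0\to 0\to M_y\to M_y\to 0$; when $y\in U$, the stalks of $h_!h^*M_Y$ and $g_!g^*M_Y$ coincide, call this common stalk $N_y$, and the sequence becomes
$$
0\to N_y \to M_y\oplus N_y\to M_y\to 0,
$$
which is split by the manifestly $G_{K_y}$-equivariant section $a\mapsto(a,0)$ of the surjection.

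Once $K^\bu$ is known to be short exact in $\F_Y^m$, applying the exact-in-first-variable functor $C^\bu_{\F_Y^m}(-, N_Y)$ produces a short exact sequence of complexes of $\Z/m$-modules, whose totalization is acyclic for the formal reason that the total complex of a short exact sequence of chain complexes is the mapping cone of a quasi-isomorphism, hence contractible. The main obstacle is the verification of split-exactness at the scheme points in part~(b): this is where the non-formal content of the lemma resides; the adjunction translation of the first step and the concluding formal acyclicity argument both rely on chain-level functorial properties of $C^\bu$ that the Appendix supplies.
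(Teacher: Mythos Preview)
Your approach is essentially the paper's: replace each row by a complex of morphisms over $Y$ via the $(f_!,f^*)$ adjunction for \'etale morphisms, and then invoke exactness of the Mayer--Vietoris triple $0\to h_!h^*M_Y\to \upsilon_!\upsilon^*M_Y\oplus g_!g^*M_Y\to M_Y\to 0$ in $\F_Y^m$.  Your stalkwise verification of split exactness is a welcome expansion of the paper's ``easy to check.''

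Two technical points are worth tightening.  First, the adjunction of Lemma~\ref{ext-adjunction} lives in the derived category, so at the chain level you only get a \emph{quasi-isomorphism} between $C^\bu_{M,N}(Z)$ and $C^\bu_{\F_Y^m}(g_!g^*M_Y,N_Y)$, not an identification; the paper makes this explicit by writing down the composite $C^\bu_{\F_Y^m}(g_!g^*f^*M,f^*N)\to C^\bu_{\F_Z^m}(g^*g_!g^*f^*M,g^*f^*N)\to C^\bu_{\F_Z^m}(g^*f^*M,g^*f^*N)$ and observing that it induces the adjunction isomorphism on cohomology.  So your bicomplex is only \emph{row-wise quasi-isomorphic} to $C^\bu_{\F_Y^m}(K^\bu,N_Y)$, which suffices for the acyclicity conclusion.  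Second, property~(\ref{exact-triples-ext-complex}) of the Appendix does not say that $C^\bu_{\F_Y^m}({-},N_Y)$ applied to an exact triple yields a short exact sequence of complexes---it asserts directly that the resulting three-row bicomplex has acyclic totalization.  Your mapping-cone justification is therefore not available, but the desired conclusion is exactly what the Appendix provides.
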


\begin{proof}
 For any \'etale morphism $g\:Z\rarrow Y$ of varieties mapping into $X$,
the composition of the natural morphisms of complexes
$$
 C^\bu_{\F_Y^m}(g_!\,g^*f^*M\;f^*N) \lrarrow
 C^\bu_{\F_Z^m}(g^*g_!\,g^*f^*M\;g^*f^*N) \lrarrow
 C^\bu_{\F_Z^m}(g^*f^*M\;g^*f^*N)
$$
is a quasi-isomorphism.
 Indeed, passing to the cohomology turns this composition into
the adjunction isomorphism for $\Ext$, related to the pair of
adjoint exact functors $g_!$ and~$g^*$ between the exact categories
$\F_Y^m$ and $\F_Z^m$ and the pair of objects $g^*f^*M\in\F_Z^m$
and $f^*N\in\F_Y^m$ (see Lemma~\ref{ext-adjunction}).
 Hence the bicomplex we are interested in is row-wise
quasi-isomorphic to the bicomplex
\begin{multline*}
 C^\bu_{\F_Y^m}(f^*M\;f^*N)\lrarrow
 C^\bu_{\F_Y^m}(\upsilon_!\,\upsilon^*f^*M\;f^*N) \\
 \oplus C^\bu_{\F_Y^m}(g_!\,g^*f^*M\;f^*N)\lrarrow
 C^\bu_{\F_Y^m}(h_!\,h^*f^*M\;f^*N),
\end{multline*}
where $h$ denotes the morphism $U\times_Y Z\rarrow Y$.
 Exactness of the total complex of the latter bicomplex follows
from exactness of the short sequence
$$
 0\lrarrow h_!\,h^*f^*M\lrarrow
 \upsilon_!\,\upsilon^*f^*M\oplus g_!\,g^*f^*M
 \lrarrow f^*M\lrarrow 0
$$
in the exact category~$\F_Y^m$, which is easy to check.
\end{proof}

 Let $H$ be the spectrum of the Henselization of the local ring
of a scheme point of an algebraic variety over~$K$.
 Just as for varieties over $K$, one can define the exact
category of \'etale sheaves $\E_H^m$ and the exact category of
filtered \'etale sheaves $\F_H^m$ for the scheme~$H$.
 For any morphism $h$ from $H$ to a variety $Y$ over $K$
there is an exact functor of inverse image
$h^*\:\F_Y^m\rarrow\F_H^m$.

 Let $M$ and $N$ be objects of the category $\F_X^m$ for some
variety $X$ over~$K$.
 Given a morphism $h\:H\rarrow X$, we denote by $C^\bu_{M,N}(H)=
C^\bu_{\F_H^m}(h^*M,h^*N)$ the corresponding complex computing
$\Ext_{\F_H^m}(h^*M,h^*N)$.

\begin{lem}  \label{nisnevich-limit-lemma}
 Let $y$ be a scheme point of an algebraic variety $Y$ mapping
into $X$, and let $h_y\:H_y\rarrow Y$ be the related morphism into $Y$
from the Henselian local scheme $H_y$ corresponding to this point.
 Then the natural map of complexes of\/ $\Z/m$\+modules
$$
 (h_y^*\,C^\bu_{M,N})(H_y)\lrarrow C^\bu_{M,N}(H_y),
$$
where $h_y^*\,C^\bu_{M,N}$ denotes the inverse image of the complex of
presheaves $C^\bu_{M,N}$ under the functor between the categories of
\'etale morphisms into $H_y$ and $Y$ related to the morphism of
schemes~$h_y$, is a quasi-isomorphism.
\end{lem}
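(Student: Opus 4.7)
The plan is to identify both sides of the asserted quasi-isomorphism with one and the same filtered colimit $\varinjlim_\alpha C^\bu_{M,N}(W_\alpha)$, where $\{W_\alpha\}$ is the cofiltered system of étale neighborhoods of the scheme point~$y$ in $Y$ whose inverse limit is $H_y$. The left-hand side $(h_y^*\,C^\bu_{M,N})(H_y)$ is literally this colimit: by the definition of the inverse image of a complex of presheaves along the morphism between categories of étale morphisms induced by~$h_y$, and since $H_y$ is not itself étale of finite type over~$Y$ but only a cofiltered limit of such, the value of $h_y^*\,C^\bu_{M,N}$ at the structure morphism $H_y\rarrow H_y$ is computed by left Kan extension as the colimit over all factorizations $H_y\rarrow W_\alpha\rarrow Y$ with $W_\alpha\rarrow Y$ étale of finite type. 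So the real task is to identify the right-hand side $C^\bu_{\F^m_{H_y}}(h_y^*M,h_y^*N)$ with this same colimit.

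For this, I would first verify that the exact $\Z/m$-linear category $\F^m_{H_y}$, together with its subcategory $\E^m_{H_y}$, is the filtered colimit (in the appropriate $2$-categorical sense) of the exact categories $\F^m_{W_\alpha}$ and $\E^m_{W_\alpha}$. Any constructible étale sheaf of $\Z/m$-modules on $H_y$ descends to a sheaf on some $W_\alpha$ by the standard passage-to-the-limit results for constructible étale sheaves on a cofiltered inverse system of Noetherian schemes with affine transition maps (\cite[Arcata~IV.5]{SGA41/2}), and the same holds for morphisms and for finite filtrations by étale subsheaves. The scheme points of $H_y$ arise as images of the morphism $H_y\rarrow W_\alpha$ at prime ideals, and the residue fields are unchanged, so the Galois group condition defining $\E^m$ and the splitting condition on stalks defining exact triples in $\F^m$ transfer back and forth without loss.

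Next I would invoke the property of the construction $C^\bu_\E(M,N)$ from the Appendix — namely, that each term $C^n_\E(M,N)$ is built from $\Z/m$-modules of morphisms and from collections of admissible short exact triples in~$\E$, and is therefore preserved by filtered colimits of $\Z/m$-linear exact categories in which every object, morphism, and admissible triple is defined at some stage. Applied to $\F^m_{H_y}=\varinjlim_\alpha\F^m_{W_\alpha}$ and to the pair $(h_y^*M,h_y^*N)$, this gives
$$
 C^\bu_{\F^m_{H_y}}(h_y^*M,h_y^*N) =
 \varinjlim_\alpha C^\bu_{\F^m_{W_\alpha}}(M|_{W_\alpha},N|_{W_\alpha}),
$$
which matches the left-hand side and in fact shows that the comparison map is an isomorphism on the nose, \emph{a fortiori} a quasi-isomorphism.

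The principal technical obstacle lies in the last step: one must confirm that the explicit construction of $C^\bu$ in the Appendix really does commute with filtered $2$-colimits of exact categories, and in particular that admissible short exact sequences in $\F^m_{H_y}$ descend to admissible short exact sequences in some $\F^m_{W_\alpha}$ rather than merely to exact sequences in the ambient abelian category of étale sheaves. Constructibility reduces the Galois-splitting requirement to a finite combinatorial condition at finitely many scheme points of $H_y$ with finitely generated stalks, which is what makes such a descent possible.
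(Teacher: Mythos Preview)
Your proposal is correct and follows essentially the same route as the paper's own proof: both identify the left-hand side as the filtered colimit $\varinjlim_\alpha C^\bu_{M,N}(W_\alpha)$ over \'etale neighborhoods, then argue that $\E^m_{H_y}$ and $\F^m_{H_y}$ are the filtered inductive limits of the exact categories $\E^m_{W_\alpha}$ and $\F^m_{W_\alpha}$, and conclude by compatibility of the $\Ext$ construction with such limits. The paper phrases the last step as ``filtered inductive limits of exact categories commute with the passage to the $\Ext$ groups'' (yielding a quasi-isomorphism), whereas you push a little further to a termwise identification of the complexes $C^\bu$ themselves; your aside that ``the residue fields are unchanged'' is not literally accurate for non-closed points of $H_y$, but this does not affect the argument, and the paper does not justify that step in any greater detail.
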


\begin{proof}
 The complex $(h_y^*\,C^\bu_{M,N})(H_y)$ is the filtered inductive
limit of the complexes $C^\bu_{M,N}(Z)$ over all varieties $Z$,
\'etale over $Y$ and endowed with a lifting $H_y\rarrow Z$ of
the morphism $H_y\rarrow Y$.
 The scheme $H_y$ is the projective limit in the category of
schemes of the filtered projective system of varieties~$Z$.
 Moreover, the abelian category $\Et^m_{H_y}$ of constructible
\'etale sheaves of $\Z/m$\+modules over $H_y$ is equivalent to
the inductive limit of the abelian categories $\Et^m_Z$
(with respect to the functors of inverse image of \'etale sheaves).

 Furthermore, on a filtered inductive limit of exact categories
there is a natural exact category structure.
 The exact categories $\E_{H_y}^m$ and $\F_{H_y}^m$ are equivalent
to the inductive limits of the exact categories $\E_Z^m$ and
$\F_Z^m$, respectively.
 Finally, the filtered inductive limits of exact categories 
commute with the passage to the $\Ext$ groups.
\end{proof}

\begin{lem} \label{nisnevich-residue-lemma}
 Let $H$ be the spectrum of the Henselization of the local ring
of a scheme point of an algebraic variety over $K$, and let
$\iota\:\eta\rarrow H$ be the closed point of~$H$.
 Let $M$ and $N$ be objects of the category $\F_H^m$.
 Assume that the successive quotient objects $\gr_F^jM$ of
the object $M$ with respect to the filtration $F$ are locally
constant \'etale sheaves on~$H$.
 Then the natural map 
$$
 C^\bu_{\F_H}(M,N)\lrarrow C^\bu_{\F_\eta}(\iota^*M\;\iota^*N),
$$
where $\iota^*\:\F_H^m\rarrow\F_\eta^m$ denotes the inverse image
with respect to~$\iota$, is a quasi-isomorphism of complexes of\/
$\Z/m$\+modules.
\end{lem}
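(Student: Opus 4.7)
The plan is a two-stage d\'evissage: first I reduce to $M=\Z/m$ by adjunction, then I d\'evissage $N$ down to an object of the form $\M_\cc^m(Z/H)$ and analyse two sub-cases.

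The hypothesis that each $\gr_F^jM$ is locally constant forces $M$ itself to be locally constant on~$H$: on a Henselian local scheme, an extension of lisse \'etale sheaves is lisse (five-lemma on specialisation maps). Using the long exact sequences of $\Ext$ in $\F_H^m$ and $\F_\eta^m$ attached to the filtration on~$M$, and tensoring both arguments by $\mu_m^{\ot -j}$ to eliminate Tate twists, I reduce to the case $M=P$ for some lisse $P\in\E_H^m$. Under the equivalence between lisse constructible sheaves on the Henselian local $H$ and finite continuous $G_\eta$-modules (since the \'etale fundamental group of $H$ equals $G_\eta$), such a $P$ corresponds to a permutational $G_\eta$-module, hence to a finite direct sum $P\simeq\bigoplus_k\M_\cc^m(Y_k/H)$ with each $Y_k\to H$ a connected finite \'etale cover. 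By additivity it suffices to treat $M=\M_\cc^m(Y/H)=f_!\,\Z/m$ for $f\:Y\to H$ finite \'etale. Applying Lemma~\ref{ext-adjunction} to the exact adjoint pair $f_!\dashv f^*$, together with its analogue on the $\eta$-side and the base-change formula $\iota^*f_!\simeq (f_\eta)_!\,\iota_Y^*$, transforms both sides of the claimed quasi-isomorphism into the same statement for $(Y,Y_\eta,f^*N)$ in place of $(H,\eta,N)$ with $M=\Z/m$; since $Y$ is again Henselian local, I may assume $M=\Z/m$ outright.

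With $M=\Z/m$, d\'evissage of $N$ along its filtration via the long exact sequences of $\Ext^*_{\F_H^m}(\Z/m,-)$ and $\Ext^*_{\F_\eta^m}(\Z/m,\iota^*(-))$ reduces to $N\in\E_H^m$. By the Henselian-local analogue of Lemma~\ref{generated-by-etale}, any such $N$ is the target of an admissible epimorphism from some $\M_\cc^m(Z/H)$ with $Z$ \'etale over~$H$; iterating on the kernels and invoking the long exact sequences of $\Ext$ reduces the problem to $N=\M_\cc^m(Z/H)$. Such a $Z$ decomposes canonically as $Z=Z^f\sqcup Z^o$, where $Z^f$ is the union of components meeting the closed fibre (each finite \'etale over~$H$) and $Z^o$ factors through the open complement $j\:U=H\setminus\{\iota\}\hookrightarrow H$. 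On the summand $\M_\cc^m(Z^f/H)$, which is lisse on~$H$, both arguments of $\Ext$ lie in the full exact subcategory of lisse filtered sheaves in $\F_H^m$, and this subcategory is equivalent via $\iota^*$ to $\F_\eta^m$ (extensions of lisse by lisse in $\F_H^m$ remain lisse on Henselian local~$H$); the coincidence of $\Ext$ on this summand is then immediate.

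For the summand $\M_\cc^m(Z^o/H)$ one has $\iota^*\M_\cc^m(Z^o/H)=0$, so the task becomes to establish the vanishing $\Ext^i_{\F_H^m}(\Z/m,\M_\cc^m(Z^o/H))=0$. Writing $\M_\cc^m(Z^o/H)=j_!F'$ and using the short exact sequence $0\to j_!F'\to j_*F'\to \iota_*\iota^*j_*F'\to 0$ in $\Et_H^m$, the classical isomorphism $H^i(H,G)\simeq H^i(\eta,\iota^*G)$ valid for any constructible $G$ on a Henselian local~$H$ yields the vanishing of $\Ext^i_{\Et_H^m}(\Z/m,j_!F')$. The main obstacle I anticipate is the transfer of this vanishing from the ambient abelian category $\Et_H^m$ to the exact category $\F_H^m$: a length-$i$ Yoneda extension in $\F_H^m$ is in particular one in $\Et_H^m$ carrying extra stalkwise-splitting data, and one must verify that the global trivialisations produced in $\Et_H^m$ restrict to stalkwise splittings compatible with the finer exact structure of $\F_H^m$. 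This descent is automatic---a global splitting restricts to a stalkwise splitting at every scheme point---and iterating through the Yoneda description gives $\Ext^i_{\F_H^m}(\Z/m,j_!F')=0$, completing the quasi-isomorphism of the complexes $C^\bu$.
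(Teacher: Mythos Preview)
Your d\'evissage is far more laborious than needed, and the final step contains a genuine gap.

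The paper's proof is a one-line adjunction argument. The key observation you are missing is that the inclusion of the full exact subcategory of $\F_H^m$ consisting of objects with lisse successive quotients (which is equivalent to $\F_\eta^m$ via $\iota^*$, since $\pi_1^{\acute et}(H)=G_\eta$) is \emph{left adjoint} to $\iota^*\:\F_H^m\rarrow\F_\eta^m$. This adjunction is established by considering the map from the small \'etale site of $H$ to the site of finite \'etale $H$-schemes: the inverse image for this site map is the inclusion of lisse sheaves, and the direct image is identified with $\iota^*$ by the Henselian property. Once you have this pair of adjoint exact functors, Lemma~\ref{ext-adjunction} gives the quasi-isomorphism immediately, with no d\'evissage on either argument.

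The gap in your argument is the last paragraph. You correctly anticipate the obstacle---transferring the vanishing of $\Ext^i(\Z/m,j_!F')$ from $\Et_H^m$ to the finer exact category $\E_H^m$---but your resolution of it is invalid for $i\ge 2$. The exact forgetful functor $\E_H^m\rarrow\Et_H^m$ induces a map on Yoneda $\Ext^i$, and for $i=1$ this map is indeed injective (a global splitting of a short exact sequence is a morphism, and $\E_H^m$ is a full subcategory). But for $i\ge 2$ a trivialisation of a Yoneda $i$-extension in $\Et_H^m$ is a chain of morphisms and auxiliary \emph{objects} in $\Et_H^m$; there is no reason those auxiliary objects lie in $\E_H^m$, nor that the morphisms are admissible there. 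Your sentence ``a global splitting restricts to a stalkwise splitting'' is a statement about $\Ext^1$ and does not iterate. In fact, the whole point of the distinction between $\F_X^m$ and $\Et_X^m$ (see the Remark following Theorem~\ref{isomorphism-theorem}) is that their $\Ext$ groups genuinely differ; injectivity of the comparison map is essentially the content of the $K(\pi,1)$-type conjectures, not something automatic. The vanishing you want \emph{is} true, but it follows from the adjunction above (take $N$ with $\iota^*N=0$), not from your argument.
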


\begin{proof}
 The restriction of the functor~$\iota^*$ to the full exact subcategory
of $\F_H^m$ consisting of the objects with locally constant
successive quotients is an equivalence of exact categories,
since the \'etale fundamental groups of $H$ and~$\eta$ coincide.
 The embedding of exact categories $\F_\eta^m\rarrow\F_H^m$
inverse to this restriction is left adjoint to the functor~$\iota^*$.

 Essentially, this is so because finite \'etale covers of $H$
are cofinal among all \'etale covers and any morphism from
a scheme finite and \'etale over~$\eta$ to a scheme \'etale over $H$
forming a commutative diagram with~$\iota$ factors through a scheme
finite and \'etale over~$H$.
 One can consider the site $\mathfrak G$ of finite \'etale morphisms
into $H$ (with the obvious topology).
 Then the \'etale site of $H$ maps to the site $\mathfrak G$ in
the obvious way (the direction of the site map being opposite to that
of the functor).

 The category of constructible sheaves over $\mathfrak G$ is equivalent
to the category of locally constant constructible \'etale sheaves
over~$H$.
 The inverse image functor for the above map of sites is the embedding
of the locally constant constructible \'etale sheaves into
arbitrary constructible \'etale sheaves over $H$, while the direct
image is identified with the functor~$\iota^*$.
 So the former is left adjoint to the latter.
 This adjunction of exact functors between the abelian categories
$\Et_H^m$ and $\Et_\eta^m$ induces the desired adjunction of exact
functors between the exact categories $\F_H^m$ and $\F_\eta^m$.

 It remains to apply Lemma~\ref{ext-adjunction} in order to deduce
the desired isomorphism of the $\Ext$ modules.
\end{proof}

 Notice that the assertions of the above three Lemmas are equally
applicable to the complexes computing the $\Z/m$\+modules $\Ext$
in the abelian categories of constructible \'etale sheaves
$\Et_Y^m$, \ $\Et_H^m$, etc.\ in place of the exact categories
$\F_Y^m$, \ $\F_H^m$, etc., and the complexes of presheaves
formed from these complexes of $\Z/m$\+modules.

\Section{Hypercohomology and Derived Direct Image}

 In this section we prove several technical lemmas.
 Given a presheaf $P$ on the category of varieties \'etale over
a given variety $X$ or on the category of all varieties
over $K$, we denote by $P_\Nis$ the sheafification of $P$ in
the Nisnevich topology.
 For a complex of presheaves $P^\bu$, the similar notation
$P^\bu_\Nis$ stands for the complex of Nisnevich sheaves
obtained by sheafifying every term of the complex~$P^\bu$.
 Given a complex of Nisnevich sheaves $S^\bu$ with the cohomology
sheaves bounded from below, $\H_\Nis^i(X,S^\bu)$ denotes
the Nisnevich hypercohomology of $X$ with coefficients in~$S^\bu$.

\begin{lem}  \label{nisnevich-cohomology-morphism}
 Let $P^\bu$ be a complex of presheaves of\/ $\Z/m$\+modules on
the category of varieties \'etale over a fixed variety $X$ over $K$
with the cohomology presheaves $H^iP^\bu$ bounded from below.
 Then there is a natural map $H^iP^\bu(X)\rarrow
\H^i_\Nis(X,P^\bu_\Nis)$ from the cohomology modules
$H^iP^\bu(X)$ of the complex of sections $P^\bu(X)$ to
the hypercohomology modules\/ $\H^i_\Nis(X,P^\bu_\Nis)$.
\end{lem}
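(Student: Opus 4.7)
The plan is to construct the map as the composition of two natural arrows in the derived category of $\Z/m$\+modules. For the first, since Nisnevich sheafification is left adjoint to the forgetful functor from sheaves to presheaves, there is a canonical unit morphism of complexes of presheaves $P^\bu\rarrow P^\bu_\Nis$; evaluating on $X$ yields a morphism of complexes of $\Z/m$\+modules
$$
 P^\bu(X)\lrarrow P^\bu_\Nis(X)=\Gamma(X,P^\bu_\Nis).
$$
For the second, there is the standard derived-functor morphism $\Gamma(X,P^\bu_\Nis)\rarrow R\Gamma_\Nis(X,P^\bu_\Nis)$ obtained by choosing either a K-injective resolution of $P^\bu_\Nis$ or, more explicitly, a Cartan--Eilenberg injective resolution $P^\bu_\Nis\rarrow I^{\bu,\bu}$ whose totalization represents $R\Gamma_\Nis(X,P^\bu_\Nis)$. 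Composing the two morphisms and passing to cohomology yields the desired natural map $H^iP^\bu(X)\rarrow\H^i_\Nis(X,P^\bu_\Nis)$.

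The main, mild technical point concerns the second arrow: one needs $R\Gamma_\Nis(X,-)$ to be unambiguously defined on $P^\bu_\Nis$. Since evaluation on $X$ is an exact functor on presheaves, one has $H^iP^\bu(X)=(H^iP^\bu)(X)$, and since Nisnevich sheafification is exact, one has $H^i(P^\bu_\Nis)=(H^iP^\bu)_\Nis$; the hypothesis that $H^iP^\bu=0$ for $i\ll 0$ therefore guarantees bounded-below cohomology on both sides. Replacing $P^\bu$ by its canonical truncation $\tau_{\ge N}P^\bu$ for $N\ll 0$ makes the construction entirely bounded below, so that a degreewise injective Cartan--Eilenberg resolution has finite totalization in each degree and legitimately computes the Nisnevich hypercohomology. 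Naturality in $P^\bu$ is immediate from the functoriality of each of the two morphisms entering the composition, and is essentially the only thing that remains to verify after the construction is in place.
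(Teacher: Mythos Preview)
Your construction is correct and essentially coincides with the paper's: the paper packages the map as the adjunction unit $P^\bu\rarrow J(P^\bu_\Nis)$ for the pair of derived functors $(\Nis,J)$ between $\D^+(PreSh)$ and $\D^+(Sh_\Nis)$, where $J$ sends a complex of sheaves to an injective resolution viewed as presheaves, and then evaluates on $X$; unpacking that adjunction gives precisely your two-step composition $P^\bu(X)\rarrow P^\bu_\Nis(X)\rarrow I^\bu(X)$. The derived-category adjunction language is not needed for the present lemma, but the paper uses that formulation again in the next lemma when analyzing the cone of $P^\bu\rarrow J(P^\bu_\Nis)$.
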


\begin{proof}
 Let $PreSh$ denote the abelian category of presheaves of
$\Z/m$\+modules on the category of varieties \'etale over $X$,
and let $Sh_\Nis$ be the category of Nisnevich sheaves of
$\Z/m$\+modules on the same category/site.
 Then there is the functor $\Nis\:\D^+(PreSh)\rarrow\D^+(Sh_\Nis)$
induced by the (exact) Nisnevich sheafification functor
$PreSh\rarrow Sh_\Nis$.
 The functor $\Nis$ has a right adjoint functor $J\:\D^+(Sh_\Nis)
\rarrow\D^+(PreSh$) constructed as follows.
 Given a bounded below complex of sheaves $S^\bu$, choose its
bounded below injective resolution $I^\bu$ in $Sh_\Nis$ and
consider it as a complex of presheaves; then $J(S^\bu)=I^\bu$.

 Now for any complex of presheaves $P^\bu$ there is the adjunction
morphism $P^\bu\rarrow J(P^\bu_\Nis)$ in $\D^+(PreSh)$.
 The induced morphism of complexes of sections $P^\bu(X)\rarrow
J(P^\bu_\Nis)(X)$ represents the desired cohomology map
$H^*P^\bu(X)\rarrow\H^*_\Nis(X,P^\bu_\Nis)$.
\end{proof}

\begin{lem}  \label{nisnevich-cohomology-iso}
 Let $P^\bu$ be a complex of presheaves of\/ $\Z/m$\+modules on
the category of varieties \'etale over $X$ with the cohomology
presheaves $H^iP^\bu$ bounded from below.
 Assume that for any distinguished pair $U\rarrow Y$ and $Z\rarrow Y$
of morphisms of varieties \'etale over $X$ the total complex of
the bicomplex with three rows
$$
 P^\bu(Y)\lrarrow P^\bu(U)\oplus P^\bu(Z)\lrarrow P^\bu(U\times_Y Z)
$$
is acyclic.
 Then the natural map $H^iP^\bu(X)\rarrow\H^i_\Nis(X,P^\bu_\Nis)$
is an isomorphism.
\end{lem}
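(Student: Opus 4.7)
The plan is to deduce the lemma from the Brown--Gersten descent theorem for the Nisnevich topology (equivalently, Voevodsky's descent theorem for the cd\+structure generated by distinguished Nisnevich squares).

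First, I would choose a K\+injective resolution $P^\bu_\Nis\rarrow I^\bu$ in the category of complexes of Nisnevich sheaves of $\Z/m$\+modules on the small Nisnevich site of~$X$. By K\+injectivity one has $\H^i_\Nis(Y,P^\bu_\Nis)=H^i I^\bu(Y)$ for every variety $Y$ \'etale over~$X$, and the natural map of Lemma~\ref{nisnevich-cohomology-morphism} is induced by the canonical morphism of complexes of presheaves $P^\bu\rarrow I^\bu$. Hence it suffices to prove that the induced map $P^\bu(X)\rarrow I^\bu(X)$ is a quasi\+isomorphism.

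The key observation is that $I^\bu$, viewed as a complex of presheaves, itself satisfies the Mayer--Vietoris descent property for distinguished squares: for any complex of Nisnevich sheaves and any distinguished square there is a Mayer--Vietoris long exact sequence in Nisnevich hypercohomology, which in the K\+injective case coincides with the cohomology of sections. Consequently, the mapping cone $C^\bu$ of $P^\bu\rarrow I^\bu$ is again a complex of presheaves on the small Nisnevich site of~$X$ satisfying MV descent. Moreover, $C^\bu$ has acyclic Nisnevich stalks: for any scheme point $y\in Y$ with associated Henselian local scheme $H_y$, evaluation at $H_y$ (i.e., the filtered inductive limit of sections over \'etale neighborhoods of~$y$) yields $H^i I^\bu(H_y)=\H^i_\Nis(H_y,P^\bu_\Nis|_{H_y})=H^i P^\bu_\Nis(H_y)$, the first equality by K\+injectivity and the second by the vanishing of higher Nisnevich cohomology on Henselian local schemes; and $P^\bu(H_y)=P^\bu_\Nis(H_y)$ because sheafification preserves stalks.

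The final step, which is the main substantive ingredient, is the Brown--Gersten / Morel--Voevodsky descent theorem: a bounded\+below complex of presheaves on the small Nisnevich site of a Noetherian scheme of finite Krull dimension which satisfies MV descent for distinguished squares and has acyclic Nisnevich stalks is sectionwise acyclic on every~$Y$. Its proof proceeds by Noetherian induction on the dimension of the support of a hypothetical nonzero cohomology class, repeatedly passing to a dense open subscheme together with an \'etale lift of its complement to form a distinguished square and using the MV long exact sequence to reduce to strictly smaller\+dimensional support. Applying this to $C^\bu$ yields that $C^\bu(X)$ is acyclic, which gives the desired isomorphism. The only nontrivial step is this descent theorem, but it is a standard general result about the Nisnevich cd\+structure, independent of the motivic setup.
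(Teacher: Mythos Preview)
Your proposal is correct and follows essentially the same route as the paper's proof: form the cone of the comparison map $P^\bu\rarrow I^\bu$ (the paper writes $J(P^\bu_\Nis)$ for your $I^\bu$), observe that both complexes and hence the cone satisfy Mayer--Vietoris for distinguished squares, observe that the cone has trivial Nisnevich sheafification (equivalently, acyclic Henselian stalks), and conclude by a Noetherian-induction argument that the cone is sectionwise acyclic. The only difference is packaging: you invoke the Brown--Gersten/Morel--Voevodsky descent theorem as a black box with a one-line sketch, whereas the paper writes out that very argument by hand---taking a hypothetical nonzero cocycle $s\in Q^0(Y)$, stratifying $Y$ so that a Nisnevich cover killing $s$ has sections over each stratum, and peeling off closed strata until a distinguished pair produces a contradiction. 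Your phrasing ``acyclic Nisnevich stalks'' is the paper's ``annihilated by the sheafification functor''; your K-injective resolution is the paper's bounded-below injective resolution (harmless here since the cohomology presheaves are bounded below, so one may first truncate $P^\bu$).
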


\begin{proof}
 We use the notation from the proof of
Lemma~\ref{nisnevich-cohomology-morphism}.
 It suffices to show that the morphism $P^\bu\rarrow J(P^\bu_\Nis)$
is a quasi-isomorphism of complexes of presheaves whenever
the complex of presheaves $P^\bu$ satisfies the condition of Lemma.

 The total complex of the bicomplex with three rows entering into
this condition is the complex of morphisms from the complex of
presheaves $(\Z/m)_{U\times_Y Z}\rarrow(\Z/m)_U\oplus(\Z/m)_Z\rarrow
(\Z/m)_Y$ into $P^\bu$, where $(\Z/m)_Y$ is the presheaf of
$\Z/m$\+modules freely generated by the presheaf of sets represented
by~$Y$.
 The presheaves $(\Z/m)_Y$ are projective objects, so this total
complex computes also the $\Hom$ in the derived category
$\D^+(PreSh)$.
 The sheafification functor sends the above three-term complex of
presheaves to an acyclic complex of Nisnevich sheaves. 
 Hence the complex $J(P^\bu_\Nis)$, being a complex of injective
sheaves, also satisfies the same condition.

 So does the cone $Q^\bu$ of the morphism of complexes of presheaves
$P^\bu\rarrow J(P^\bu_\Nis)$.
 Besides, the complex of presheaves $Q^\bu$ is annihilated by
the sheafification functor.
 From these two properties, we will deduce that $Q^\bu$ is acyclic.
 Clearly, it suffices to assume that $Q^i=0$ for $i<0$ and prove
that the differential $d\:Q^0\rarrow Q^1$ is injective.

 Suppose $s\in Q^0(Y)$ is a section annihilated by the morphism
$Q^0\rarrow Q^1$.
 Then $s$~must be also annihilated by the map $Q^0(Y)\rarrow
Q^0_\Nis(Y)$, since otherwise the morphism of sheaves
$Q^0_\Nis\rarrow Q^1_\Nis$ wouldn't be injective and the complex
of sheaves $Q^\bu_\Nis$ wouldn't be acyclic.

 Consider a Nisnevich cover $V_\alpha\rarrow Y$ of the variety $Y$
such that the section~$s$ vanishes in the restriction to~$V_\alpha$.
 Choose a stratification of $Y$ by connected locally closed
subvarieties, over each of which $V_\alpha$ has a section.
 Assume that $s\ne 0$.
 Let us throw away closed strata from this stratification one by
one until we find an open subvariety $Y'\subset Y$ such that
$s|_{Y'}$ is still nonzero, but there is a closed stratum
$W\subset Y$ such that $s|_U=0$, where $U=Y'\backslash W$.
 There exists $\beta\in\{\alpha\}$ such that the full preimage
$W\times_Y V_\beta$ of $W$ in $V_\beta$ contains a connected
component whose morphism to $W$ is an isomorphism.
 Let $Z\subset Y'\times_Y V_\beta$ denote the complement to all
the other connected components of this full preimage.
 Then the morphisms $U\rarrow Y'$ and $Z\rarrow Y'$ form
a distinguished pair.
 
 Since $s|_U=0=s|_Z$ and $d(s)=0$, while $s|_{Y'}\ne 0$ in $Q^0(Y')$
and $Q^{-1}=0$, the section $s|_{Y'}$ represents a nonzero
cohomology class in the total complex of the bicomplex associated
with this distinguished pair and the complex of presheaves~$Q^\bu$.
 This contradiction proves that $s=0$ and the complex $Q^\bu$
is acyclic. 
\end{proof}

 Now let $\U$ be a Grothendieck universe set such that $K\in \U$.
 Given a variety $X$ over $K$, let $\Et_X^{m,\U}$ denote the abelian
category of \'etale sheaves of $\Z/m$\+modules over $X$ belonging
to~$\U$; so $\Et_X^m\subset\Et_X^{m,\U}\subset\Et_X^{m,\infty}$.

 Let $\rho\:\Et\rarrow Nis$ denote the natural map between the (big)
\'etale and Nisnevich sites of algebraic varieties over~$K$.
 Let $P\in \U$ be an \'etale sheaf of $\Z/m$\+modules on the site
of all varieties over~$K$.
 The restriction of $P$ to the (small) \'etale site of \'etale
varieties over a given variety $X$ defines an object of
$\Et_X^{m,\U}$, which we will denote also by~$P$.
 Consider the presheaf of complexes of $\Z/m$\+modules
$C^\bu_P(X) = C^\bu_{\Et_X^{m,\U}}(\Z/m,P)$ on the category of
varieties over~$K$.

\begin{lem}  \label{derived-direct-image-lemma}
 The complex of Nisnevich sheaves $(C^\bu_P)_\Nis$ on the site of
algebraic varieties over $K$ represents the derived direct image\/
$\mathbb R\rho_*(P)$ of the \'etale sheaf $P$ with respect to
the map of sites\/ $\rho\:\Et\rarrow Nis$.
\end{lem}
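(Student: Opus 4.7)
The plan is to identify $(C^\bu_P)_\Nis$ with $\R\rho_*(P)$ by combining the hypercohomology computation afforded by Lemma~\ref{nisnevich-cohomology-iso} with a natural comparison morphism obtained from an injective resolution of~$P$.

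First, I would check that the complex of presheaves $C^\bu_P$ satisfies the distinguished-pair hypothesis of Lemma~\ref{nisnevich-cohomology-iso}. This is the \'etale-sheaf analogue of Lemma~\ref{nisnevich-distinguished-pairs-lemma}, already indicated in the remark closing Section~4; its proof is word-for-word the same, with the exact category $\F_Y^m$ replaced by the abelian category $\Et_Y^{m,\U}$ and with the three ingredients (the adjunction $(g_!,g^*)$ for \'etale $g$, the short exact sequence $0\rarrow h_!h^*\Z/m\rarrow \upsilon_!\upsilon^*\Z/m\oplus g_!g^*\Z/m\rarrow\Z/m\rarrow 0$ associated with a distinguished pair, and the functoriality of the $C^\bu$\+construction) applying verbatim. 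Since $H^i C^\bu_P(U) = \Ext^i_{\Et_U^{m,\U}}(\Z/m, P)$, Lemma~\ref{nisnevich-cohomology-iso} then yields natural isomorphisms $\H^i_\Nis\bigl(X,(C^\bu_P)_\Nis\bigr) \simeq \Ext^i_{\Et_X^{m,\U}}(\Z/m, P)$ and identifies the cohomology sheaves $H^i(C^\bu_P)_\Nis$ with $R^i\rho_*(P)$.

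Second, I would construct a natural morphism $\varphi\colon (C^\bu_P)_\Nis \rarrow \R\rho_*(P)$ in $\D^+(Sh_\Nis)$. Choose an injective resolution $P \rarrow I^\bu$ in the category of \'etale sheaves of $\Z/m$\+modules (in~$\U$) on the big site over~$K$. Since every \'etale sheaf is automatically a Nisnevich sheaf and injective \'etale sheaves are flabby (hence $\rho_*$\+acyclic), $I^\bu$ viewed in $\D^+(Sh_\Nis)$ represents $\R\rho_*(P)$. The functorial $C^\bu$\+construction from the Appendix produces a chain map $C^\bu_P \rarrow I^\bu$ in $\D^+(PreSh)$ extending the identity on $H^0 = P$; after Nisnevich sheafification, this becomes the desired $\varphi$.

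Third, I would verify that $\varphi$ is a quasi-isomorphism. By construction it induces the identity on cohomology sheaves, which on both sides equal $R^i\rho_*(P)$, and since the two complexes are bounded below this suffices. An alternative stalk-based verification applies the $\Et^{m,\U}$ analogue of Lemma~\ref{nisnevich-limit-lemma}: the stalk of $(C^\bu_P)_\Nis$ at a Henselian local scheme $H_y$ is $C^\bu_{\Et_{H_y}^{m,\U}}(\Z/m, P)$, computing $\Ext^*_{\Et_{H_y}^{m,\U}}(\Z/m, P)$, which coincides with the stalk of $\R\rho_*(P)$ at~$H_y$.

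The main obstacle is the naturality of the chain map in the second step: because $C^\bu_P$ is not a resolution of $P$ as a presheaf (its higher cohomology presheaves $U\mapsto \Ext^i_{\Et_U^{m,\U}}(\Z/m,P)$ need not vanish before Nisnevich sheafification), one cannot directly extend the identity $P\rarrow P$ to a chain map $C^\bu_P \rarrow I^\bu$ by invoking injectivity in $PreSh$. The extension must be effected either by unpacking the explicit formulas of the Appendix or by passing to an auxiliary injective-presheaf resolution of~$I^\bu$ and using the derived-category adjunction.
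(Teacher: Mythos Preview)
Your proposal identifies the right obstacle in its final paragraph and then leaves it standing. The paper's proof bypasses your second step entirely by constructing a zigzag of quasi-isomorphisms already at the level of complexes of \emph{presheaves}, so that no extension-by-injectivity problem ever arises.

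The key device you are missing is the bicomplex $C^\bu_{I^\bu}$ obtained by applying the functor $X\longmapsto C^\bu_{\Et_X^{m,\U}}(\Z/m,-)$ term by term to an injective resolution $P\rarrow I^\bu$ in the category of \'etale sheaves. There is a natural morphism $I^\bu\rarrow\mathrm{Tot}\,C^\bu_{I^\bu}$ (each section of $I^j$ over $X$ is a morphism $\Z/m\rarrow I^j$ in $\Et_X^{m,\U}$, hence a degree-$0$ cocycle in $C^\bu_{I^j}(X)$), and this is a quasi-isomorphism of complexes of presheaves because $\Ext^i_{\Et_X^{m,\U}}(\Z/m,I^j)=0$ for $i>0$. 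There is also a natural morphism $C^\bu_P\rarrow\mathrm{Tot}\,C^\bu_{I^\bu}$ induced by functoriality of $C^\bu$ in the second argument along $P\rarrow I^\bu$, and this is a quasi-isomorphism of complexes of presheaves by property~(\ref{exact-triples-ext-complex}) of the Appendix (short exact sequences in the second argument yield acyclic total complexes). Nisnevich sheafification, being exact, preserves both quasi-isomorphisms, and $I^\bu$ viewed as a complex of Nisnevich sheaves is by definition $\R\rho_*(P)$.

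Your first step, invoking Lemma~\ref{nisnevich-cohomology-iso}, is correct but plays no role in the paper's argument for this lemma. Your attempt to produce a direct chain map $C^\bu_P\rarrow I^\bu$ cannot be completed as written: $C^\bu_P$ is not a presheaf resolution of $P$, the terms of $I^\bu$ are not injective presheaves, and the ``derived-category adjunction'' you allude to still presupposes some morphism in $\D^+(PreSh)$ to start from---which is exactly what the bicomplex provides as a zigzag $C^\bu_P\rarrow\mathrm{Tot}\,C^\bu_{I^\bu}\leftarrow I^\bu$.
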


\begin{proof}
 Applying the canonical truncation, we can assume that the complex
of presheaves $C^\bu_P$ is concentrated in nonnegative
cohomological degrees.
 Furthermore, a section of an \'etale sheaf $P$ over a variety $X$
can be viewed as a morphism $\Z/m\rarrow P$ in $\Et^{m,\U}_X$, and
to such a morphism one can naturally assign a cocycle of degree~$0$
in the complex $C^\bu_P(X)$.
 Hence there is a natural morphism $P\rarrow C^\bu_P$ of complexes
of presheaves on the category of varieties over~$K$.

 Now let $I^\bu$ be an injective resolution of the \'etale sheaf~$P$.
 Then there is the bicomplex of presheaves $C^\bu_{I^\bu}$ on
the category of varieties over~$K$.
 The complexes of presheaves $I^\bu$ and $C^\bu_P$ map naturally
into the total complex of the bicomplex $C^\bu_{I^\bu}$.
 The map from the former is a quasi-isomorphism of complexes of
presheaves, since $\Ext^i_{\Et^{m,\U}_X}(\Z/m,I^j)=0$ for $i>0$.
 The map from the latter is also a quasi-isomorphism due to
the property of the complexes $C^\bu_P$ with respect to short exact
sequences of the objects $P\in\Et^{m,\U}_X$ (see
item~\eqref{exact-triples-ext-complex} in Appendix).

 The Nisnevich sheafification transforms quasi-isomorphisms of
complexes of pre\-sheaves into quasi-isomorphisms of complexes of
Nisnevich sheaves.
 It also does not change the complex of \'etale sheaves~$I^\bu$.
 Thus the complexes $(C^\bu_P)_\Nis$ and $I^\bu$ are connected by
a natural chain of quasi-isomorphisms of complexes of Nisnevich
sheaves.
 By the definition, the complex $I^\bu$, considered as a complex
of Nisnevich sheaves, computes $\mathbb R\rho_*(P)$.
\end{proof}

\Section{Comparison Theorem}  \label{main-theorem-section}

 The following two theorems constitute the first main result of
this paper.

\begin{thm}  \label{map-theorem}
 For any algebraic variety $X$ over a field $K$ of
characteristic not dividing~$m$, there are natural
maps of\/ $\Z/m$\+modules
\begin{equation}  \label{main-comparison-map}
 \theta_X^{m,i,j}\:\Ext_{\F_X^m}^i(\Z/m,\Z/m(j))\lrarrow
 \H^i_\Nis(X,\tau_{\le j}\mathbb R\rho_*\mu_m^{\ot j}).
\end{equation}
 The compositions of these maps with the Nisnevich hypercohomology
maps induced by the morphisms
$\tau_{\le j}\mathbb R\rho_*\mu_m^{\ot j}\rarrow
\mathbb R\rho_*\mu_m^{\ot j}$
coincide with the maps\/
$\Ext_{\F_X^m}^i(\Z/m,\Z/m(j))\allowbreak\rarrow
H^i_{\acute et}(X,\mu_m^{\ot j})$
induced by the exact forgetful functor $\F_X^m\rarrow\Et_X^m$.
\end{thm}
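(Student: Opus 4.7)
The plan is to construct $\theta$ at the level of complexes of Nisnevich sheaves, combining the machinery of the previous two sections. The exact forgetful functor $\F_Y^m\rarrow\Et_Y^{m,\U}$ (forgetting the filtration), for $Y$ \'etale over $X$, induces by the functoriality of the Appendix's $\Ext$-complex a morphism of complexes of presheaves on varieties \'etale over $X$:
$$
 C^\bu_{\F^m}(\Z/m,\Z/m(j))\lrarrow C^\bu_{\Et^{m,\U}}(\Z/m,\mu_m^{\ot j}).
$$
By Lemma~\ref{nisnevich-distinguished-pairs-lemma}, the source satisfies the Mayer--Vietoris condition for Nisnevich distinguished pairs, so Lemma~\ref{nisnevich-cohomology-iso} supplies a canonical identification
$$
 \Ext^i_{\F_X^m}(\Z/m,\Z/m(j))\simeq\H^i_\Nis(X,(C^\bu_{\F^m}(\Z/m,\Z/m(j)))_\Nis).
$$
Meanwhile, Lemma~\ref{derived-direct-image-lemma} identifies the Nisnevich sheafification of the target of the forgetful morphism with $\R\rho_*\mu_m^{\ot j}$. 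Thus sheafifying produces a morphism in the derived category of Nisnevich sheaves on $X$
$$
 (C^\bu_{\F^m}(\Z/m,\Z/m(j)))_\Nis\lrarrow \R\rho_*\mu_m^{\ot j}
$$
whose induced map on $\H^i_\Nis(X,-)$ is the forgetful map $\Ext^i_{\F_X^m}(\Z/m,\Z/m(j))\rarrow H^i_{\acute et}(X,\mu_m^{\ot j})$.

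To obtain $\theta$, I would show this morphism factors uniquely through $\tau_{\le j}\R\rho_*\mu_m^{\ot j}$. By the t-structure on the derived category of Nisnevich sheaves, existence and uniqueness of the factorization follow once $(C^\bu_{\F^m}(\Z/m,\Z/m(j)))_\Nis$ is shown to lie in $\D^{\le j}$, i.e., once its cohomology sheaves are shown to vanish in degrees $>j$. Computing stalks at a Nisnevich point via Lemma~\ref{nisnevich-limit-lemma} and Lemma~\ref{nisnevich-residue-lemma}, the stalk of the $i$-th cohomology sheaf at the spectrum of a Henselian local ring with residue field $L$ equals $\Ext^i_{\F_L^m}(\Z/m,\Z/m(j))=\Ext^i_{\M\A\T(L,\Z/m)}(\Z/m,\Z/m(j))$ (the hypothesis of Lemma~\ref{nisnevich-residue-lemma} holds since $\Z/m$ has locally constant filtration-graded pieces). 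The required vanishing for $i>j$ is a structural statement about the exact category of Artin--Tate motives over a field: it flows from the analysis of \cite{mat}, where the filtered structure with cyclotomically twisted permutational graded pieces is shown to impose a ``weight budget'' bounding the $\Ext$-degree by the twist. Applying $\H^i_\Nis(X,-)$ to the canonical lift defines $\theta_X^{m,i,j}$; the stated compatibility with the forgetful-induced map is tautological, since the composition of the lift with $\tau_{\le j}\R\rho_*\mu_m^{\ot j}\rarrow\R\rho_*\mu_m^{\ot j}$ recovers the sheafified forgetful morphism by construction.

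The principal obstacle is establishing the vanishing $\Ext^i_{\F_L^m}(\Z/m,\Z/m(j))=0$ for $i>j$ over a field $L$. This is not evident from the bare definition of the exact category; it is the central algebraic input, furnished by the explicit analysis of \cite{mat} of the category $\M\A\T(L,\Z/m)$, and it is what makes the whole construction go through.
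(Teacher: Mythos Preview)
Your argument is correct and follows essentially the same route as the paper: construct the forgetful morphism of presheaf complexes, sheafify, use Lemmas~\ref{nisnevich-limit-lemma}--\ref{nisnevich-residue-lemma} together with the field-level vanishing \cite[Theorem~6.1(1)]{mat} to see the source lies in $\D^{\le j}$ and hence factors through the truncation, and then identify source and target via Lemmas~\ref{nisnevich-distinguished-pairs-lemma}, \ref{nisnevich-cohomology-iso}, and~\ref{derived-direct-image-lemma}. The only cosmetic difference is that the paper phrases the factorization as an induced morphism on truncated complexes rather than invoking the $t$-structure adjunction explicitly.
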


 The maps~\eqref{main-comparison-map} for $X=\Spec L$, where $L$ is
a field of characteristic not dividing~$m$, were discussed in
the paper~\cite{mat}.
 In this case, the right-hand side of~\eqref{main-comparison-map} is
isomorphic to the Galois cohomology group $H^i(G_L,\mu_m^{\ot j})$
when $i\le j$ and vanishes for $i>j$.
 The left-hand side of~\eqref{main-comparison-map} for $X=\Spec L$
can be also easily seen to vanish for $i>j$ 
\cite[Theorem~6.1(1)]{mat}.
 The maps $\theta_{\Spec L}^{m,i,j}$ are simply induced by
the forgetful functor from the exact category $\F_{\Spec L}^m$ of
filtered $G_L$\+modules with a restriction on the successive
quotients to the abelian category of arbitrary (discrete)
$G_L$\+modules over~$\Z/m$.

 Assuming the Beilinson--Lichtenbaum conjecture for the field~$L$,
these maps can be also described as being induced by the embedding
of the exact category $\F_{\Spec L}^m\simeq\M\A\T(L,\Z/m)$ into
the triangulated category $\D\M(L,\Z/m)$ \cite[Theorem~3.1(1)]{mat}.
 The assertion that the maps $\theta_{\Spec L}^{m,i,j}$ are
isomorphisms is (a particular case of) what was called
the \emph{silly filtration} or \emph{$K(\pi,1)$\+conjecture for
Artin--Tate motives over\/ $L$ with coefficients in\/ $\Z/m$}
in~\cite{mat}.
 Specifically, it is \cite[Conjecture~9.2]{mat} for the field denoted
by $K$ in~\cite{mat} being our field $L$ and the field denoted by $M$
being its separable closure (see~\cite[Sections~9.3 and~9.9]{mat}
for some further details).

 When the field $L$ contains a primitive $m$\+root of unity, this
conjecture is equivalent to the Koszul property of the big graded
ring of the diagonal $\Ext$ between the Artin--Tate
motives~\cite[Proposition~8.1, Theorem~9.1, and Section~9.5]{mat}.
 More precisely, the maps $\theta_{\Spec L}^{m,i,j}$ are isomorphisms
for all (finite, separable) algebraic extensions $L$ of a given
field $K\ni\sqrt[m]{1}$ if and only if the big graded ring $A$
\cite[formula~(9.4)]{mat} describing the Artin--Tate motives over~$K$
with coefficients $\Z/m$ is Koszul.

\begin{thm}  \label{isomorphism-theorem}
 Given an integer $m$ and an algebraic variety $X$, the comparison
maps~\textup{\eqref{main-comparison-map}} are isomorphisms for $X$
and all the varieties $Y$ \'etale over~$X$ if and only if they are
isomorphisms for all (the spectra of the residue fields of)
the scheme points $y\in Y$.
\end{thm}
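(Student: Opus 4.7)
My plan is to reinterpret both sides of~\eqref{main-comparison-map} as Nisnevich hypercohomology of complexes of Nisnevich sheaves on the small Nisnevich site of~$X$, and then reduce the isomorphism question to a pointwise check at Nisnevich stalks, which the lemmas of the previous section identify with the residue-field comparison maps.

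The first step is to upgrade the left-hand side of~\eqref{main-comparison-map} to Nisnevich hypercohomology. I consider the complex of presheaves $P^\bu$ on the small Nisnevich site of $X$ defined by $P^\bu(Y)=C^\bu_{\F_Y^m}(\Z/m,\Z/m(j))$ for $Y$ \'etale over~$X$. By Lemma~\ref{nisnevich-distinguished-pairs-lemma}, this complex satisfies the distinguished-pair descent hypothesis of Lemma~\ref{nisnevich-cohomology-iso}, so that lemma provides a natural isomorphism $\Ext^i_{\F_Y^m}(\Z/m,\Z/m(j))\cong\H^i_\Nis(Y,P^\bu_\Nis)$ for every $Y$ \'etale over~$X$. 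Using the forgetful functor $\F_Y^m\rarrow\Et_Y^m$ (which sends $\Z/m(j)$ to $\mu_m^{\ot j}$) together with Lemma~\ref{derived-direct-image-lemma}, the comparison map $\theta_Y^{m,i,j}$ is exhibited as induced on Nisnevich hypercohomology by a morphism $P^\bu_\Nis\rarrow\tau_{\le j}\R\rho_*\mu_m^{\ot j}$ in the derived category of Nisnevich sheaves on the small Nisnevich site of~$X$ (the lift through $\tau_{\le j}$ being supplied by the construction of Theorem~\ref{map-theorem}).

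The second step is a formal one: since both complexes are bounded below, the morphism of Nisnevich sheaves is a quasi-isomorphism if and only if it induces isomorphisms on $\H^*_\Nis(Y,-)$ for every $Y$ \'etale over~$X$, and equivalently if and only if it is an isomorphism on Nisnevich stalks at every scheme point~$y$ of every such~$Y$. I then compute the stalks at such a point~$y$ by chaining the lemmas of the preceding section. On the left, Lemma~\ref{nisnevich-limit-lemma} shows the stalk of $P^\bu_\Nis$ at~$y$ is quasi-isomorphic to $C^\bu_{\F_{H_y}^m}(\Z/m,\Z/m(j))$, where $H_y$ is the Henselization of the local ring at~$y$; since $\Z/m$ and $\Z/m(j)$ are locally constant on~$H_y$, Lemma~\ref{nisnevich-residue-lemma} identifies this further with $C^\bu_{\F_{\Spec K_y}^m}(\Z/m,\Z/m(j))$, whose cohomology is $\Ext^*_{\F_{\Spec K_y}^m}(\Z/m,\Z/m(j))$. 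On the right, the Nisnevich stalk of $\R\rho_*\mu_m^{\ot j}$ at~$y$ is the \'etale cohomology $H^*(H_y,\mu_m^{\ot j})=H^*(G_{K_y},\mu_m^{\ot j})$, and after applying $\tau_{\le j}$ and observing that Nisnevich cohomology of a point is concentrated in degree zero, this recovers $\H^*_\Nis(\Spec K_y,\tau_{\le j}\R\rho_*\mu_m^{\ot j})$.

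The final step is to verify that, under these stalk identifications, the induced map on stalks at~$y$ is exactly $\theta_{\Spec K_y}^{m,i,j}$; once this is granted, the equivalence asserted by the theorem is immediate. This functoriality statement is the main technical obstacle I anticipate: it requires checking that the chain of quasi-isomorphisms from $P^\bu_\Nis$ to $C^\bu_{\F_{\Spec K_y}^m}(\Z/m,\Z/m(j))$ (Henselization followed by restriction to the closed point) is compatible, via the forgetful functor, with the parallel chain for $\R\rho_*\mu_m^{\ot j}$ and with the construction of~$\theta$ in Theorem~\ref{map-theorem}. This is ultimately a matter of careful diagram-chasing, but it is the only place where more than direct application of the preceding lemmas is required.
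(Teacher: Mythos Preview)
Your proposal is correct and follows essentially the same route as the paper's own proof: both sides of~\eqref{main-comparison-map} are identified with Nisnevich hypercohomology of two complexes of sheaves (via Lemmas~\ref{nisnevich-distinguished-pairs-lemma} and~\ref{nisnevich-cohomology-iso} on the left, Lemma~\ref{derived-direct-image-lemma} on the right), the comparison map is realized by a morphism of these complexes factoring through the truncation, and the stalkwise computation via Lemmas~\ref{nisnevich-limit-lemma}--\ref{nisnevich-residue-lemma} identifies the induced map on Nisnevich stalks with $\theta_y^{m,i,j}$. The paper packages the argument for Theorems~\ref{map-theorem} and~\ref{isomorphism-theorem} together and is terser about the compatibility check you flag at the end, but the logical structure is the same.
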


\begin{proof}[Proof of two Theorems]
{\hbadness=1300
 Consider the complex of presheaves $C^\bu_{\Z/m,\,\Z/m(j)}(X) =
C^\bu_{\F^m_X}(\Z/m,\Z/m(j))$ on the category of varieties
over~$K$.
 There is a natural morphism from it to the complex of presheaves
$C^\bu_{\Z/m,\,\mu_m^{\ot j}}(X) = C^\bu_{\Et_X^m}(\Z/m,\mu_m^{\ot j})$
induced by the forgetful functors $\F_X^m\rarrow\Et_X^m$.
 Consider the induced morphism of complexes of Nisnevich sheaves
\begin{equation} \label{nisnevich-complexes-map}
 (C^\bu_{\Z/m,\,\Z/m(j)})_\Nis\lrarrow
 (C^\bu_{\Z/m,\,\mu_m^{\ot j}})_\Nis.
\end{equation} \par}

 By Lemmas~\ref{nisnevich-limit-lemma}\+-%
\ref{nisnevich-residue-lemma}, the map of the cohomology of the stalks
at the Henzelization of the local ring of a scheme point $y\in Y$
induced by~\eqref{nisnevich-complexes-map} is identified with
the map~$\theta_y^{m,i,j}$.
 In particular, the stalks of the left-hand side
of~\eqref{nisnevich-complexes-map} are concentrated in
the cohomological degrees~$\le j$, hence there is the induced morphism
\begin{equation} \label{nisnevich-complexes-truncated-map}
 (C^\bu_{\Z/m,\,\Z/m(j)})_\Nis\lrarrow
 \tau_{\le j}\.(C^\bu_{\Z/m,\,\mu_m^{\ot j}})_\Nis.
\end{equation}
in the derived category of Nisnevich sheaves over the site of
algebraic varieties over~$K$.

 Now consider the map of Nisnevich hypercohomology of $X$ induced
by the morphism~\eqref{nisnevich-complexes-truncated-map}.
 By Lemmas~\ref{nisnevich-distinguished-pairs-lemma}
and~\ref{nisnevich-cohomology-iso}, the $\Z/m$\+modules
$\H^i_\Nis(X,(C^\bu_{\Z/m,\,\Z/m(j)})_\Nis)$ are naturally
isomorphic to $\Ext^i_{\F_X^m}(\Z/m,\Z/m(j))$.
 By Lemma~\ref{derived-direct-image-lemma}, the complex of Nisnevich
sheaves $(C^\bu_{\Z/m,\,\mu_m^{\ot j}})_\Nis$ represents
the derived direct image $\mathbb R\rho_*\mu_m^{\ot j}$.
 This proves the assertions of both Theorems.
\end{proof}

\begin{rem}
 There is a simple way to make the equivalent assertions
of Theorem~\ref{isomorphism-theorem} automatically true by
replacing the exact categories $\F_X^m$ with somewhat larger exact
categories $\F^{\prime\,m}_X$.
 Namely, let $\E_X^{\prime\,m}$ denote the abelian category $\Et_X^m$
endowed with the exact category structure in which a short sequence
is exact if it is split exact at every scheme point of~$X$.
 In other words, we just drop the condition that the Galois
representations at stalks be permutational.
 Let $\F_X^{\prime\,m}$ be the exact category of finitely filtered
objects of $\Et_X^m$ with the exact triples defined by the condition
that the triples of successive quotients by the filtration must be
exact in $\E_X^{\prime\,m}$.
 Then our proof of Theorems~\ref{map-theorem}\+-%
\ref{isomorphism-theorem} applies to the categories $\F_X^{\prime\,m}$
as well as to the categories $\F_X^m$.
 Moreover, the analogues of the maps $\theta_y^{m,i,j}$ for
the categories $\F_y^{\prime\,m}$ are easily seen to be
isomorphisms~\cite[Example~8.3 and Remark~9.3]{mat}.
 However, Lemmas~\ref{finite-etale-over-locally-closed}\+-%
\ref{generated-by-finite}, of course, do not hold for the exact
categories $\E_X^{\prime\,m}\supset\E_X^m$ and $\F_X^{\prime\,m}
\supset\F_X^m$, nor is our embedding theorem
(Theorem~\ref{embedding-theorem} below) applicable to them.

 On the other hand, let us emphasize that it is of key importance
to our $\Ext$ computation that the exact category structure on
$\F_X^m$ or $\F_X^{\prime\, m}$ is defined by the condition of
\emph{split} exactness of the short sequences of Galois modules
of stalks at scheme points.
 Indeed, this splitting condition is the reason why the canonical
truncation appears in the right-hand side of
the formula~\eqref{main-comparison-map}, i.~e., we obtain
the modified \'etale descent rule of the Beilinson--Lichtenbaum
type, rather than the conventional \'etale descent, for the $\Ext$
spaces in our exact categories (or at least certainly for
the exact categories $\F_X^{\prime\,m}$).
 For comparison, define $\F_X^{\prime\prime\,m}$ as the exact
category of filtered objects in $\Et_X^m$ with the exact triples
being the short sequences with zero composition for which the exact
triples of successive quotients are exact in $\Et_X^m$.
 Then one has $\Ext^i_{\F_X^{\prime\prime\,m}}(\Z/m,\Z/m(j))
\simeq H^i_{\acute et}(X,\mu_m^{\ot j})$
\cite[Example~D.1]{mat}.
\end{rem}

 Yet another way to modify our definition of the exact categories
$\E_X^m$ and $\F_X^m$ is to allow the Galois representations at
the stalks of the (successive quotient) sheaves to be direct summands
of permutational representations rather than permutational
representations as such.
 An argument similar to the proof of
Lemma~\ref{finite-etale-over-locally-closed} shows that the exact
categories one obtains in this way are simply the closures of 
the exact categories $\E_X^m$ and $\F_X^m$ with respect to adjoining
the images of idempotent endomorphisms.
 Thus such a change in the definitions does not affect the
$\Z/m$\+modules $\Ext$, and only leads to the necessity to mention
the passage to the direct summands in the formulations of
Lemmas~\ref{finite-etale-over-locally-closed}\+-%
\ref{generated-by-finite}.

\begin{cor}  \label{low-degree-cor}
 Let $X$ be an algebraic variety over~$K$.
 Then
\begin{enumerate}
\renewcommand{\theenumi}{\alph{enumi}}
\item for any $j\in\Z$, the map $\theta_X^{m,i,j}$
      from~\eqref{main-comparison-map} is an isomorphism for
      $i=0$, $1$, and a monomorphism for $i=2$;
\item for any $j\le 2$, the map $\theta_X^{m,i,j}$
      is an isomorphism for all~$i\in\Z$;
\item whenever $X$ is normal and connected, the\/ $\Z/m$\+module\/
      $\Ext^i_{\F_X^m}(\Z/m,\Z/m)$ vanishes for all $i\ne0$, and is
      freely generated by the identity endomorphism when $i=0$.
\end{enumerate}
\end{cor}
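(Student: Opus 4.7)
The strategy is to reduce each of the three statements to the analogous statement over spectra of fields via Theorem~\ref{isomorphism-theorem}, and then invoke the field-case results of~\cite{mat}. Recall that the proof of Theorems~\ref{map-theorem}\+-\ref{isomorphism-theorem} realizes $\theta_X^{m,i,j}$ as the Nisnevich hypercohomology of a morphism $\phi^{m,j}$ of complexes of sheaves whose stalks at Henselian local schemes are precisely $\theta_y^{m,i,j}$ over the residue fields $K_y$. Hence, if $\theta_L^{m,i,j}$ is an isomorphism for $i \le n-1$ and a monomorphism for $i = n$ over every such field $L$, then the cone of $\phi^{m,j}$ has cohomology sheaves concentrated in degrees $\ge n$, and the hypercohomology long exact sequence propagates these bounds to $\theta_X^{m,i,j}$. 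If $\theta_L^{m,i,j}$ is an isomorphism for every~$i$, then $\phi^{m,j}$ is a quasi-isomorphism and $\theta_X^{m,i,j}$ is an isomorphism in every degree.

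For~(a), I would verify directly that $\theta_L^{m,i,j}$ is an isomorphism for $i = 0, 1$ and a monomorphism for $i = 2$ over any field~$L$. In degree~$0$, a filtered morphism $\Z/m \to \Z/m(j)$ unwinds to a $G_L$-equivariant morphism $\Z/m \to \mu_m^{\ot j}$ when $j \ge 0$ and is forced to vanish when $j < 0$, matching $\H^0$ of the truncated complex. In degree~$1$, for $j \ge 1$ the split-exactness condition on the separated $\gr$-pieces is automatic, so extensions in $\F_L^m$ biject with $G_L$-module extensions of $\Z/m$ by $\mu_m^{\ot j}$, yielding $H^1(G_L, \mu_m^{\ot j})$; for $j \le 0$ the filtration canonically forces every extension to be split and both sides vanish. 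The injectivity in degree~$2$ is the classical categorical fact that, for a full exact subcategory of an abelian category whose exact structure is compatible, the natural map on Yoneda $\Ext^2$ is injective; applied to $\F_L^m$ sitting inside the category of filtered $G_L$-modules and, in turn, of arbitrary $G_L$-modules, this is worked out in~\cite{mat}.

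For~(b), I need $\theta_L^{m,i,j}$ to be an isomorphism in every degree when $j \le 2$. For $j \le 1$ both sides vanish whenever $i > j$: the RHS by the $\tau_{\le j}$ truncation, and the LHS by the filtration-length bound $\Ext^i_{\F_L^m}(\Z/m, \Z/m(j)) = 0$ for $i > j$ recorded in \cite[Theorem~6.1(1)]{mat}. The case $i = j = 2$ is the main obstacle: the claim amounts to identifying $\Ext^2_{\F_L^m}(\Z/m, \Z/m(2))$ with $H^2(G_L, \mu_m^{\ot 2}) = K_2^M(L)/m$. Injectivity is~(a); surjectivity requires realizing arbitrary Merkurjev--Suslin symbols by explicit Yoneda $2$-extensions, as carried out in~\cite{mat}, and ultimately invokes the Merkurjev--Suslin theorem itself. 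The degrees $i > 2$ with $j = 2$ yield vanishing on both sides by the same filtration-length and $\tau_{\le 2}$ arguments.

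For~(c), I apply~(b) with $j = 0$: the identification $\Ext^i_{\F_X^m}(\Z/m, \Z/m) \cong \H^i_\Nis(X, \rho_* \Z/m)$. Since $X$ is normal, every variety $U$ \'etale over $X$ is normal, so its connected components are irreducible; hence $\rho_*\Z/m(U) = H^0_{\acute et}(U, \Z/m) = (\Z/m)^{\pi_0(U)}$, i.e., the restriction of $\rho_*\Z/m$ to the small Nisnevich site of $X$ is the constant sheaf $\Z/m$. As $X$ is normal and connected it is irreducible, and higher Nisnevich cohomology of the constant sheaf on an irreducible Noetherian scheme vanishes; thus $\H^i_\Nis(X, \Z/m) = 0$ for $i \ne 0$, while $\H^0_\Nis(X, \Z/m) = \Z/m$ is freely generated by the identity endomorphism.
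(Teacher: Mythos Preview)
Your proposal is correct and follows essentially the same route as the paper: reduce to the field case via a degree-truncated refinement of Theorem~\ref{isomorphism-theorem} (your cone argument is exactly the paper's observation that the proof of that theorem works ``for fixed $j$ and $i$ in a bounded range''), then invoke \cite[Theorem~3.1(2)]{mat} for~(a), Merkurjev--Suslin \cite{MS1} for the surjectivity at $i=j=2$ in~(b), and the Nisnevich acyclicity of the constant sheaf on an irreducible scheme for~(c). The only differences are presentational: where the paper simply cites \cite{mat}, \cite{MS1}, and \cite[Exercise~12.32(1)]{MVW}, you sketch the content of those references, and you derive~(c) from~(b) rather than citing the Nisnevich computation directly. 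One small caution: your ``classical categorical fact'' about $\Ext^2$-injectivity does not apply verbatim, since the exact structure on $\F_L^m$ is \emph{not} the one induced from any ambient abelian category (it demands splitness on associated graded pieces); the actual argument in \cite{mat} is specific to this situation, so your deferral to that reference is what carries the weight here.
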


\begin{proof}
 First of all, it follows from the proof of
Theorem~\ref{isomorphism-theorem} above that its assertion holds when
$j$~is fixed and $i$~is restricted to an interval $0\le i\le n$ with
a fixed $n\ge0$.
 That is, given $j\in\Z$ and $n\ge0$, the maps $\theta_Y^{m,i,j}$ are
isomorphisms for all the varieties $Y$ \'etale over $X$ and all
$0\le i\le n$ if and only if the maps $\theta_y^{m,i,j}$ are
isomorphisms for all the scheme points $y\in Y$ and the same~$i$.
 Moreover, if this is the case, then the map $\theta_Y^{m,n+1,j}$
is a monomorphism for all $Y$ \'etale over $X$ if and only if
the map $\theta_y^{m,n+1,j}$ is a monomorphism for all the scheme
points $y\in Y$.

 Now part~(a) follows from~\cite[Theorem~3.1(2)]{mat}.
 To prove part~(b), one only has to check that the map
$\theta_y^{m,2,2}$ is surjective, and this so because
the Galois cohomology module $H^2(G_{K_y},\mu_m^{\ot 2})$ is
multiplicatively generated by $H^1(G_{K_y},\mu_m)$ \cite{MS1}.
 Finally, part~(c) is~\cite[Exercise~12.32(1)]{MVW}.
\end{proof}

\Section{Weak Version of Six Operations}  \label{weak-six}

 Here we spell out the additional structures on and the properties
of the triangulated categories of motivic sheaves $\D\M(X,\Z/m)$
over algebraic varieties $X$ over $K$ that we will need in order
to construct our embeddings $\F_X^m\rarrow\D\M(X,\Z/m)$.
 Of the six operations in the conventional formalism, we will use
only three: the inverse image~$f^*$, the direct image with
compact supports~$f_!$, and the tensor product
$\otimes=\otimes_{\Z/m}$.

 So, suppose that we are given the following data.
 For any algebraic variety $X$ over $K$, there is a $\Z/m$\+linear
symmetric tensor triangulated category $\D\M(X,\Z/m)$ with the unit
object $\Z/m$ and a fixed invertible object
$\Z/m(1)\in\D\M(X,\Z/m)$.
 As usually, we set $M(j)=M\ot\Z/m(1)^{\ot j}$ for
$M\in\D\M(X,\Z/m)$.
 For any morphism of algebraic varieties $f\:Y\rarrow X$,
there is a tensor triangulated functor $f^*\:\D\M(X,\Z/m)
\rarrow\D\M(Y,\Z/m)$ taking $\Z/m(1)$ to $\Z/m(1)$, and
a triangulated functor $f_!\:\D\M(Y,\Z/m)\rarrow\D\M(X,\Z/m)$.
 The following constraints and conditions are imposed.

\begin{enumerate}
\renewcommand{\theenumi}{\roman{enumi}}
\item The assignment of the functors $f^*$ and $f_!$ to
      morphisms of varieties $f\:Y\rarrow X$ takes identity
      morphisms to identity functors, and compositions to
      compositions.
      For an \'etale morphism~$f$, the functor $f_!$ is
      left adjoint to~$f^*$.
      For a proper morphism~$f$, the functor $f_!$ is
      right adjoint to~$f^*$.
      When the morphism $f$ is a universal homeomorphism,
      the functors $f_!$ and~$f^*$ are equivalences of
      triangulated categories.
\item In a base change situation, i.~e., given morphisms of
      varieties $f\:Y\rarrow X$ and $g\:Z\rarrow X$, and
      $W=Z\times_XY$ being their Cartesian product with
      the natural morphisms $f'\:W\rarrow Z$ and
      $g'\:W\rarrow Y$, the functor $f_!$ commutes with~$g^*$.
      In other words, there is an isomorphism of
      functors $g^*f_!\simeq f'_!\,g'{}^*$.
      The compatibility of these base change isomorphisms with
      the compositions of the morphisms $f$ and~$g$ holds.
      When the morphism $f$ is \'etale or proper, the base change
      isomorphism is provided by the morphism defined in terms of
      the adjunction and the compatibilities with the compositions,
      as stated in~(i).
\item For any variety $X$ with an open subvariety $\upsilon\:
      U\rarrow X$ and its closed complement $\iota\:Z\rarrow X$,
      and any object $M\in\D\M(X,\Z/m)$, there is
      a distinguished triangle
$$
 \upsilon_!\,\upsilon^*M\lrarrow M\lrarrow \iota_!\,\iota^*M\lrarrow
 \upsilon_!\,\upsilon^*M[1].
$$
      in the triangulated category $\D\M(X,\Z/m)$.
      Here the leftmost and the middle morphisms are
      the adjunction morphisms for the open embedding~$\upsilon$
      and the closed embedding~$\iota$.
      The rightmost morphism is functorial in~$M$ and
      compatible with the inverse image functors~$f^*$ with respect
      to morphisms of varieties $f\:Y\rarrow X$.
\item For any morphism of varieties $f\:Y\rarrow X$, object
      $M\in\D\M(X,\Z/m)$, and object $N\in\D\M(Y,\Z/m)$,
      there is an isomorphism $f_!(f^*M\ot N)\simeq M\ot f_!\.N$
      in the category $\D\M(X,\Z/m)$.
      This projection formula isomorphism is functorial in~$M$ and
      $N$, and compatible with the compositions of the morphisms~$f$.
      When the morphism $f$ is \'etale or proper, the projection
      formula isomorphism is provided by the morphism defined in
      terms of the adjunction and the preservation of
      the tensor product by the functor~$f^*$.
\end{enumerate}

 Furthermore, we will need to have \'etale realization functors
acting on our triangulated categories of motivic sheaves.
 These are presumed to be tensor triangulated functors
$\Phi_X\:\D\M(X,\Z/m)\rarrow\D(\Et_X^{m,\infty})$ taking values
in the derived categories of \'etale sheaves of $\Z/m$\+modules
over the varieties~$X$.
 For the sake of generality, we allow the derived categories to be
unbounded and the \'etale sheaves to be nonconstructible.
 The following constraints and conditions are imposed.

\begin{enumerate}
\setcounter{enumi}{4} \renewcommand{\theenumi}{\roman{enumi}}
\item One has $\Phi_X(\Z/m(1))=\mu_m$ for all varieties~$X$.
\item For any morphism of varieties $f\:Y\rarrow X$, the functors
      $\Phi_X$ and $\Phi_Y$ form commutative diagrams with
      the functors $f^*$ and $f_!$ between the triangulated
      categories $\D\M(X,\Z/m)$ and $\D\M(Y,\Z/m)$, and
      the similar functors between the derived categories
      $\D(\Et_X^{m,\infty})$ and $\D(\Et_Y^{m,\infty})$.
\item The functors $\Phi$ transform the constraints (i\+-iv)
      for the triangulated categories of motivic sheaves
      into the similar constraints for the derived categories
      of \'etale sheaves~\cite[Arcata~IV.5]{SGA41/2}.
\end{enumerate}

 In addition, we will have to assume that the triangulated categories
$\D\M(X,\Z/m)$ satisfy the following formulation of
the Beilinson--Lichtenbaum conjecture (proven in~\cite{SV,GL}
and~\cite{Voev-MBK}).

\begin{enumerate}
\setcounter{enumi}{7} \renewcommand{\theenumi}{\roman{enumi}}
\item For any variety $X$, the $\Z/m$\+modules
      $\Hom_{\D\M(X,\Z/m)}(\Z/m,\Z/m(j)[i])$ vanish for
      $j<0$ and all~$i$.
\item For any smooth connected variety $X$, the $\Z/m$\+module
      $\Hom_{\D\M(X,\Z/m)}(\Z/m,\allowbreak\Z/m)$ is freely generated
      by the identity endomorphism, while the $\Z/m$\+mod\-ules
      $\Hom_{\D\M(X,\Z/m)}(\Z/m,\Z/m[i])$ vanish for all $i\ne0$.
\item For any smooth variety $X$, the morphisms
      $\Hom_{\D\M(X,\Z/m)}(\Z/m,\Z/m(j)[i])\allowbreak\rarrow
      H_{\acute et}^i(X,\mu_m^{\ot j})$ induced by
      the functor $\Phi_X$ are isomorphisms for all $i\le j$
      and monomorphisms for $i=j+1$.      
\end{enumerate}

 The above assumptions will suffice for the purposes of
Sections~\ref{resolution-secn}\+-\ref{embedding-secn}, but in
Sections~\ref{applications}\+-\ref{homol-motives-secn} we will need
a more precise version of the Beilinson--Lichtenbaum conjecture.
 Let $H$ be the spectrum of the Henselization of the local ring
of a scheme point of a smooth variety over~$K$.
 Then varieties $X$ over $K$ endowed with scheme morphisms
$H\rarrow X$ form a filtered category.
 Here is our last assumption.

\begin{enumerate}
\setcounter{enumi}{10} \renewcommand{\theenumi}{\roman{enumi}}
\item The filtered inductive limit $\varinjlim_{H\to X}
      \Hom_{\D\M(X,\Z/m)}(\Z/m,\Z/m(j)[i])$ vanishes for all $i>j$
      and any Henselian scheme $H$ as above.
\end{enumerate}

\medskip
 A notable attempt to construct the triangulated categories of
motivic sheaves with the six operations formalism was undertaken
by Cisinski and D\'eglise~\cite{CD}.
 However, these categories do not seem to have all the properties
that we need in the case of motives with finite coefficients.
 The new approach by the same authors~\cite{CD2} may be much closer
to what is needed.

\Section{Resolution of Singularities} \label{resolution-secn}

 Recall that an \emph{abstract blow-up} of a variety $X$ with
a center $Z\subset X$ is a proper birational morphism
$\widetilde X\rarrow X$ that is an isomorphism over a dense
open subvariety $U$ to which $Z=X\backslash U$ is the closed
complement~\cite{Voev-triang,MVW}.
 We will say that \emph{varieties of dimension~$\le d$ over $K$
admit resolution of singularities} if for any normal variety $X$
of dimension~$\le d$ over $K$ there exists a sequence of
abstract blow-ups $X_n\rarrow\dotsb\rarrow X_1\rarrow X$ with
normal centers such that the map $X_n\rarrow X$ factors through
a proper birational morphism onto $X$ from a smooth variety.

 Notice that all the conditions (i\+-xi) of Section~\ref{weak-six}
make sense for varieties of bounded dimension, except for
the condition~(ii), which presumes that the dimension can be
increased.
 So we will say that the condition~(ii) holds for varieties of
dimension~$\le d$ if it holds whenever the varieties $X$ and $Z$
have dimensions~$\le d$ and the morphism $f\:Y\rarrow X$ is
quasi-finite.

\begin{lem} \label{resolution-lemma}
 Assume that triangulated categories of motivic sheaves
$\D\M(X,\Z/m)$ satisfying the conditions \textup{(i\+-x)}
of Section~\textup{\ref{weak-six}} are defined for varieties $X$
of dimension $\le d$ over $K$, and that such varieties admit
resolution of singularities.  Then
\begin{enumerate}
\renewcommand{\theenumi}{\alph{enumi}}
\item for any normal connected variety $X$ of dimension~$\le d$
      over $K$, the\/ $\Z/m$\+mod\-ule\/ $\Hom_{\D\M(X,\Z/m)}
      (\Z/m,\Z/m[i])$ vanishes for all $i<0$ and $i=1$, and is
      freely generated by the identity endomorphism for $i=0$;
\item for any variety $X$ of dimension~$\le d$ over $K$
      and any $j\ge1$, the morphism of\/ $\Z/m$\+modules\/
      $\Hom_{\D\M(X,\Z/m)}(\Z/m,\Z/m(j)[i])\rarrow H^i_{\acute et}
      (X,\mu_m^{\ot j})$ induced by the \'etale realization functor\/
      $\Phi_X$ is an isomorphism for $i\le 1$ and a monomorphism
      for~$i=2$.
\end{enumerate}
\end{lem}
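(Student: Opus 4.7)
The plan is to prove (a) and (b) simultaneously by induction on $d$.
The base case $d=0$ is immediate from (ix) and (x), since a zero-dimensional variety over $K$ is a disjoint union of spectra of finite separable extensions of $K$, all smooth.
The main tool in the inductive step is the cdh descent distinguished triangle associated to an abstract blow-up $\pi\:\widetilde X\rarrow X$ with center $Z$ and preimage $Z'=\pi^{-1}(Z)$, obtained by applying the octahedral axiom to the open-closed triangles (iii) for $X$ and for $\widetilde X$ combined with proper base change (ii):
\[
 \Z/m_X(j)\lrarrow\pi_!\Z/m_{\widetilde X}(j)\oplus\iota_!\Z/m_Z(j)
 \lrarrow(\pi\iota')_!\Z/m_{Z'}(j)\lrarrow\Z/m_X(j)[1].
\]
By the compatibilities (v)--(vii), the \'etale realization $\Phi$ produces the analogous triangle of \'etale sheaves, giving a Mayer--Vietoris long exact sequence for \'etale cohomology.

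For part (b), I first reduce to the case of $X$ irreducible reduced by peeling off irreducible components via (iii), then to $X$ normal by cdh descent with the normalization $\nu\:X^\nu\rarrow X$ (an abstract blow-up whose center is the non-normal locus).
For $X$ normal irreducible (hence connected), apply the resolution hypothesis to obtain $\pi\:X'\rarrow X$ proper birational with $X'$ smooth, and invoke cdh descent for this~$\pi$.
A five-lemma comparison between the motivic and \'etale long exact sequences then yields (b) for $X$: the smooth $X'$ is controlled by (x), while the strictly lower-dimensional $Z$ and $Z'$ are controlled by the inductive hypothesis (b).

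For part (a), with $X$ normal connected, apply the same resolution $\pi\:X'\rarrow X$ and take the long exact sequence of $\Hom(\Z/m_X,-)$ on the cdh triangle; the proper adjunctions of (i) identify the terms with $\Hom_{\D\M(W,\Z/m)}(\Z/m,\Z/m[i])$ for $W\in\{X',Z,Z'\}$.
By (ix) applied to the smooth connected variety $X'$, this term contributes $\Z/m$ at $i=0$ and vanishes otherwise.
Normality of $X$ together with Zariski's connectedness theorem (using $\pi_*\mathcal O_{X'}=\mathcal O_X$) forces the induced map $\pi_0(Z')\rarrow\pi_0(Z)$ to be a bijection, which makes the LES map $\Z/m\oplus\Z/m^{\pi_0(Z)}\rarrow\Z/m^{\pi_0(Z')}$ surjective with diagonal kernel~$\Z/m$; this identifies $\Hom(\Z/m_X,\Z/m_X)$ with $\Z/m$ generated by the identity and gives the vanishing for $i<0$.

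The hard part is the vanishing $\Hom(\Z/m_X,\Z/m_X[1])=0$: the LES expresses this group as a kernel involving $\Hom(\Z/m_Z,\Z/m_Z[1])$ and $\Hom(\Z/m_{Z'},\Z/m_{Z'}[1])$, neither of which vanishes in general for non-normal $Z,Z'$.
I handle this by a further cdh descent applied recursively to $Z$ and $Z'$, both of dimension~$<d$, using the normal-centers structure of the resolution sequence $X_n\rarrow\dotsb\rarrow X_1\rarrow X$ supplied by the hypothesis: normality of the centers at each stage lets Zariski's connectedness again propagate the $\pi_0$-bijectivity needed to cascade the vanishing down the tower, in combination with the inductive hypothesis and part (b) at dimension~$<d$.
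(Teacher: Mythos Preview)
Your overall scheme---induction on dimension, cdh descent from an abstract blow-up, comparison with the \'etale Mayer--Vietoris sequence via~$\Phi$, and Zariski connectedness---matches the paper.  But two points need tightening.

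First, the inductive hypothesis is too weak as you stated it.  In the long exact sequence for the resolution $X'\to X$, the lower-dimensional pieces $Z$ and $Z'$ are typically neither normal nor connected, so inducting only on statement~(a) for normal connected varieties leaves $\Hom_{\D\M(Z,\Z/m)}(\Z/m,\Z/m[i])$ uncontrolled.  The paper handles this by proving, in parallel with~(a), the auxiliary claims that $\Hom(\Z/m,\Z/m[i])=0$ for $i<0$ holds for \emph{every} variety, and that $\Hom(\Z/m,\Z/m)=\Z/m$ holds for every \emph{connected} variety (using only that $\pi_0(Z')\to\pi_0(Z)$ is surjective).  These feed cleanly into the induction.

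Second, your treatment of the case $i=1$ is the real gap.  From a single resolution $X'\to X$ you only get $\Hom_X[1]\hookrightarrow\Hom_{X'}[1]\oplus\Hom_Z[1]=\Hom_Z[1]$, and recursively cdh-descending $Z$ does not terminate the problem: $Z$ is not normal, and its resolution introduces yet another non-normal exceptional locus.  The paper instead uses the \emph{sequence} $X_n\to\cdots\to X_1\to X$ of abstract blow-ups with normal centers supplied by the hypothesis, and at each step proves the single fact that
\[
 \Hom_{\D\M(X_{k-1},\Z/m)}(\Z/m,\Z/m[1])\lrarrow
 \Hom_{\D\M(X_k,\Z/m)}(\Z/m,\Z/m[1])
\]
is \emph{injective}.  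Here normality of the center $Z_k$ enters twice: it gives (via Zariski's main theorem) at most one component of $\widetilde Z_k$ over each component of $Z_k$, making the degree-$0$ map surjective; and, crucially, since $\dim Z_k<d$ and $Z_k$ is normal, the inductive hypothesis~(a) gives $\Hom_{\D\M(Z_k,\Z/m)}(\Z/m,\Z/m[1])=0$, killing the other obstruction.  Composing these injections up the tower and using that $X_n\to X$ factors through a smooth variety finishes the argument.  Your phrase ``cascade the vanishing down the tower'' suggests you are trying to prove $\Hom_{X_k}[1]=0$ at each stage, which fails since $X_k$ need not be normal; the point is to propagate injectivity, not vanishing.
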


\begin{proof}
 Let $p\:\widetilde X\rarrow X$ be an abstract blow-up of a variety
$X$ with a closed center $\iota:Z\rarrow X$ and its open complement
$\upsilon\:U\rarrow X$.
 Let $\widetilde Z$ be the Cartesian product $Z\times_X\widetilde X$;
denote the related closed embedding by $\tilde\iota\:\widetilde Z
\rarrow\widetilde X$, the projection by $\pi\:\widetilde Z \rarrow Z$,
and the open embedding by $\tilde\upsilon\:U\rarrow\widetilde X$.
 Then we have the distinguished triangle
\begin{equation} \label{downstairs-triangle}
 \upsilon_!\,\Z/m\lrarrow\Z/m\lrarrow\iota_!\,\Z/m\lrarrow
 \upsilon_!\,\Z/m[1]
\end{equation}
in $\D\M(X,\Z/m)$ and the distinguished triangle
\begin{equation*} 
 \tilde\upsilon_!\,\Z/m\lrarrow\Z/m\lrarrow\tilde\iota_!\,\Z/m
\rarrow\tilde\upsilon_!\,\Z/m[1]
\end{equation*}
in $\D\M(\widetilde X,\Z/m)$.
 Applying the functor~$p_!$ to the latter triangle, we obtain
a distinguished triangle
\begin{equation} \label{pushdown-triangle}
 \upsilon_!\,\Z/m\lrarrow p_!\,\Z/m\lrarrow\iota_!\,\pi_!\,\Z/m\
 \lrarrow\upsilon_!\,\Z/m[1]
\end{equation}
in $\D\M(X,\Z/m)$.
 There is a natural morphism from the
triangle~\eqref{downstairs-triangle} to
the triangle~\eqref{pushdown-triangle} acting by identity on
their common first vertex.
 Thus the octahedron axiom applies, and we obtain two distinguished
triangles with a common first vertex
\begin{equation} \label{compare-triangles}
\begin{gathered} 
 M\lrarrow\Z/m\lrarrow p_!\,\Z/m\lrarrow M[1] \\
 M\lrarrow\iota_!\,\Z/m\lrarrow\iota_!\,\pi_!\,\Z/m\lrarrow M[1]
\end{gathered}
\end{equation}
in $\D\M(X,\Z/m)$.
 Twisting with~$(j)$ and applying the functor $\Hom_{\D\M(X,\Z/m)}
(\Z/m,\allowbreak{-}[*])$, we get the exact sequences
\begin{multline}  \label{motivic-p-compare}
\dotsb\rarrow\Hom_{\D\M(X,\Z/m)}(\Z/m,M(j)[i])\rarrow
\Hom_{\D\M(X,\Z/m)}(\Z/m,\Z/m(j)[i]) \\ \rarrow
\Hom_{\D\M(\widetilde X,\Z/m)}(\Z/m,\Z/m(j)[i])\rarrow
\Hom_{\D\M(X,\Z/m)}(\Z/m,M(j)[i+1])
\end{multline}
and
\begin{multline}  \label{motivic-pi-compare}
\Hom_{\D\M(Z,\Z/m)}(\Z/m\;\Z/m(j)[i-1])\rarrow
\Hom_{\D\M(\widetilde Z,\Z/m)}(\Z/m\;\Z/m(j)[i-1]) \\ \rarrow
\Hom_{\D\M(X,\Z/m)}(M,\Z/m(j)[i])\rarrow
\Hom_{\D\M(Z,\Z/m)}(\Z/m,\Z/m(j)[i])\rarrow\dotsb
\end{multline}
 Applying to the triangles~\eqref{compare-triangles}
the \'etale realization functor~$\Phi_X$, twisting the result
with $\mu_m^{\ot j}$, and passing to the \'etale cohomology,
we obtain the exact sequences
\begin{multline}  \label{etale-p-compare}
 \dotsb\lrarrow\H_{\acute et}^i(X\;\Phi_X(M)\ot\mu_m^{\ot j})
 \lrarrow H_{\acute et}^i(X,\mu_m^{\ot j}) \\ \lrarrow
 H_{\acute et}^i(\widetilde X,\mu_m^{\ot j}) \lrarrow
 \H_{\acute et}^{i+1}(X\;\Phi_X(M)\ot\mu_m^{\ot j})
\end{multline}
and
\begin{multline} \label{etale-pi-compare}
 H_{\acute et}^{i-1}(Z,\mu_m^{\ot j})\lrarrow
 H_{\acute et}^{i-1}(\widetilde Z,\mu_m^{\ot j}) \\ \lrarrow
 \H_{\acute et}^i(X\;\Phi_X(M)\ot\mu_m^{\ot j})\lrarrow
 H_{\acute et}^i(Z,\mu_m^{\ot j})\lrarrow\dotsb
\end{multline}
together with morphisms of exact sequences
from~\eqref{motivic-p-compare} to~\eqref{etale-p-compare}
and from~\eqref{motivic-pi-compare} to~\eqref{etale-pi-compare}.

\medskip
 We will argue by induction in $\dim X$.
 The case of a smooth variety~$X$ is covered by the condition~(ix)
from Section~\ref{weak-six} for part~(a) and the condition~(x)
for part~(b).

 The assertion about the vanishing of $\Hom_{\D\M(X,\Z/m)}(\Z/m,
\Z/m[i])$ for $i<0$ holds for any variety~$X$.
 One proves this using a proper birational morphism $\widetilde X
\rarrow X$ onto $X$ from a smooth variety $\widetilde X$ and the exact
sequences~(\ref{motivic-p-compare}\+-\ref{motivic-pi-compare}).
 The assertion about $\Hom_{\D\M(X,\Z/m)}(\Z/m,\Z/m)$ holds
for any connected variety~$X$.
 Once again, one proves this using an abstract blow-up of $X$ with
a smooth variety $\widetilde X$, using the fact that there is
at least one connected component of $\widetilde Z$ over
every component of~$Z$.

 The assertion about the vanishing of $\Hom_{\D\M(X,\Z/m)}
(\Z/m,\Z/m[1])$ holds for any normal variety~$X$.
 To prove this, one first shows that the morphism
$\Hom_{\D\M(X,\Z/m)}(\Z/m,\Z/m[1])\rarrow
\Hom_{\D\M(\widetilde X,\Z/m)}(\Z/m,\Z/m[1])$ is  
injective for any abstract blow-up $\widetilde X\rarrow X$
with a normal center~$Z$.
 This follows from the the exact
sequences~(\ref{motivic-p-compare}\+-\ref{motivic-pi-compare})
and the fact that there is at most one
connected component of $\widetilde Z$ over every connected
component of $Z$ (Zariski's main
theorem~\cite[Proposition~4.3.5]{EGAIII1}).
 Since a composition of such abstract blow-ups factors
through a smooth variety, the desired vanishing assertion follows.
{\hbadness=2000\par}

 To prove the assertions of part~(b), consider a proper birational
morphism $\widetilde X\rarrow X$ onto $X$ from a smooth variety
$\widetilde X$ and apply the 5-lemma to the morphisms of exact
sequences $\eqref{motivic-p-compare}\to\eqref{etale-p-compare}$
and $\eqref{motivic-pi-compare}\to\eqref{etale-pi-compare}$.
\end{proof}

 The above proof is the only argument in this paper where any kind
of resolution of singularities is used.
 With the exception of Section~\ref{homol-motives-secn}, it is also
the only argument that uses the direct image functors~$f_!$ for
morphisms~$f$ that are not necessarily quasi-finite.
 To the extent that, for a particular definition of the triangulated
categories $\D\M(X,\Z/m)$, the assertions of
Lemma~\ref{resolution-lemma} could be established differently,
neither the resolution of singularities, nor the functors $f_!$ for
any but quasi-finite morphisms~$f$ would be needed for our purposes.

\Section{Embedding Theorem} \label{embedding-secn}

 In this section we only use the conditions~(i\+-viii)
of Section~\ref{weak-six}, together with the results
of Section~\ref{resolution-secn}.
 The next theorem is the second main result of this paper.

\begin{thm}  \label{embedding-theorem}
 Assume that the triangulated categories of motivic sheaves
$\D\M(X,\allowbreak\Z/m)$ satisfying the conditions \textup{(i\+-x)}
of Section~\textup{\ref{weak-six}} are defined for varieties
$X$ of dimension $\le d$ over $K$, and that such varieties
admit resolution of singularities.

 Then for any variety $X$ of dimension~$\le d$ over $K$
there is a natural tensor fully faithful functor\/
$\Theta_X\:\F_X^m\rarrow\D\M(X,\Z/m)$.
 The image of this functor is an exact subcategory
closed under extensions in the triangulated category
$\D\M(X,\Z/m)$ in the sense
of~\textup{\cite[\textit{Section}~A.8]{mat}}, and the induced
exact category structure coincides with the exact category
structure on $\F_X^m$ defined in Section~\textup{\ref{exact-at}}.

 The functors\/ $\Theta$ form commutative diagrams with
the inverse image functors $f^*$ for morphisms~$f$ of varieties
of dimension~$\le d$ and the direct image functors~$f_!$ for
quasi-finite morphisms of varieties of dimension~$\le d$.
 The composition of the embedding $\Theta_X\:\F_X^m\rarrow
\D\M(X,\Z/m)$ with the \'etale realization functor\/
$\Phi_X\:\D\M(X,\Z/m)\rarrow\D(\Et_X^{m,\infty})$ coincides with
the composition of the exact forgetful functor $\F_X^m\rarrow\Et_X^m$
and the natural embedding $\Et_X^m\rarrow\D(\Et_X^{m,\infty})$.
\end{thm}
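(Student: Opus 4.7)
The plan is to build $\Theta_X$ first on the generators of $\F_X^m$ provided by Lemma~\ref{finite-etale-over-locally-closed} and then extend inductively over iterated extensions, using the matching of $\Hom$ and $\Ext^1$ between the two sides that follows from the low-degree comparison theorems of Sections~\ref{main-theorem-section} and~\ref{resolution-secn}. On a generator $\M_\cc^m(Y/X)(j)$ with defining quasi-finite morphism $f\:Y\rarrow X$, set $\Theta_X(\M_\cc^m(Y/X)(j)):=f_!\Z/m(j)\in\D\M(X,\Z/m)$. For morphisms and $\Ext^1$ between two such generators, I would reduce both sides to computations over a variety $W$ built from iterated base changes between $Y$ and~$Y'$, factoring each quasi-finite morphism as an open immersion followed by a finite one and combining condition~(i) of Section~\ref{weak-six} with base change~(ii) on the $\D\M$ side, and using the adjunctions of Section~\ref{inverse-direct-section} on the $\F$ side. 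Both reductions yield Ext (or Hom in $\D\M$) between the Tate twists $\Z/m$ and $\Z/m(j'-j)$ over~$W$. By Corollary~\ref{low-degree-cor} on the $\F$-side and Lemma~\ref{resolution-lemma} on the $\D\M$-side, for $i\le 1$ these agree canonically, the identification being induced by the natural maps of Theorem~\ref{map-theorem}.

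By Lemma~\ref{finite-etale-over-locally-closed}, every object $M$ of $\F_X^m$ is an iterated extension of generators. Define $\Theta_X(M)$ inductively: given a short exact sequence $A\hookrightarrow M\twoheadrightarrow C$ in $\F_X^m$ with $\Theta_X(A)$ and $\Theta_X(C)$ already defined, transport its $\Ext^1$-class across the identification just established to a morphism $\Theta_X(C)\rarrow\Theta_X(A)[1]$ in $\D\M(X,\Z/m)$ and declare $\Theta_X(M)$ to be the shifted cone. Independence of the chosen filtration and functoriality in morphisms follow by induction on filtration length via the five-lemma applied to the long exact sequences in $\Ext$ and $\Hom$ on both sides, extending the matching from generators to arbitrary objects of the image; the monomorphism statement of Lemma~\ref{resolution-lemma}(b) at $i=2$ prevents hidden $\Ext^2$-obstructions from breaking the induction. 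Vanishing of negative-degree $\Hom$ between generators, coming from condition~(viii) of Section~\ref{weak-six} together with Lemma~\ref{resolution-lemma}(a), ensures cones are determined up to unique isomorphism. Full faithfulness of $\Theta_X$, matching of the exact structures, and extension-closedness of the image then follow from the construction.

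Compatibility with $f^*$, $f_!$, and the tensor product is verified first on generators using conditions~(i), (ii), (iv), and extends by the universal property of the inductive construction. Compatibility with the étale realization follows from conditions~(v) and~(vi): on generators, $\Phi_X(f_!\Z/m(j))=f_!\mu_m^{\ot j}$, which is precisely the image of $\M_\cc^m(Y/X)(j)$ under the exact forgetful functor $\F_X^m\rarrow\Et_X^m$. The main obstacle is the coherence of the inductive step: one must show that different presentations of the same object $M$ as an iterated extension produce canonically isomorphic values of $\Theta_X(M)$, and that admissible morphisms in $\F_X^m$ lift uniquely and functorially. This rests on the naturality of the $\Ext^1$ matching between generators — not merely its being an abstract isomorphism — so that it commutes with the long exact sequences in $\Ext$ arising from extensions in $\F_X^m$ on either side.
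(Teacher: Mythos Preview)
Your approach differs from the paper's in a crucial way: you try to build $\Theta_X$ \emph{forward}, assigning values on generators and extending inductively over iterated extensions. The paper goes the other direction. It first defines the full subcategory $\M\A\T(X,\Z/m)\subset\D\M(X,\Z/m)$ generated under extensions by the objects $f_!\,\Z/m(j)$, shows it is an exact subcategory (via vanishing of negative-degree $\Hom$, its Lemma~\ref{vanishing-lemma}), and then constructs a functor $\Theta_X^{-1}\colon\M\A\T(X,\Z/m)\rarrow\F_X^m$ by applying the \'etale realization $\Phi_X$ and remembering the canonical weight filtration that every object of $\M\A\T(X,\Z/m)$ carries. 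This $\Theta_X^{-1}$ is \emph{already a functor}, given by a concrete recipe, so no coherence question arises. The paper then shows $\Theta_X^{-1}$ is an equivalence of exact categories via \cite[Lemma~3.2]{mat}, which needs exactly the $\Ext^0,\Ext^1$ isomorphisms and $\Ext^2$ injectivity on generators that you also identify; $\Theta_X$ is then the inverse.

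Your forward construction has a genuine gap at the step you yourself flag as the ``main obstacle.'' Matching $\Hom$ and $\Ext^1$ between generators, even with $\Ext^2$ injectivity, does not by itself produce a well-defined functor on iterated extensions into a bare triangulated category. Non-functoriality of cones means that ``transport the $\Ext^1$-class and take the shifted cone'' determines an object only up to isomorphism for a \emph{fixed} presentation; your five-lemma argument controls morphisms compatible with given presentations, but does not show that two different presentations of the same $M$ yield canonically isomorphic values, nor that an arbitrary morphism $M\rarrow M'$ (not filtered with respect to the chosen presentations) goes anywhere definite. This is precisely why results such as \cite[Lemma~3.2]{mat} are stated for a functor already given in one direction; the paper's insight is that the \'etale realization supplies that functor for free. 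A secondary gap: your appeal to Lemma~\ref{resolution-lemma}(b) for the $\Ext^2$ monomorphism covers only $j\ge 1$; for $j=0$ the paper needs a separate argument (its Lemma~\ref{Zm-injective}, showing $\Z/m$ is injective in $\M\A(X,\Z/m)$ for normal~$X$) together with Corollary~\ref{low-degree-cor}(c).
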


\begin{proof}
 Let $\M\A\T(X,\Z/m)$ denote the minimal full subcategory of
the triangulated category $\D\M(X,\Z/m)$ containing all
the objects $f_!\,\Z/m(j)$, for quasi-finite morphisms $f\:Y\rarrow X$
and $j\in\Z$, and closed under extensions.
 We will check that $\M\A\T(X,\Z/m)$ is an exact subcategory of
$\D\M(X,\Z/m)$, then refine the restriction of the \'etale
realization functor $\Phi_X$ to the full subcategory
$\M\A\T(X,\Z/m)$ so as to obtain a tensor exact functor
$\Theta_X^{-1}\:\M\A\T(X,\Z/m)\rarrow\F_X^m$, and finally show that
the functor $\Theta_X^{-1}$ is an equivalence of exact categories,
so it can be inverted, providing the desired fully faithful
embedding $\Theta_X$.

 Let $\M\A(X,\Z/m)$ denote the minimal full subcategory of
$\D\M(X,\Z/m)$ containing the objects $f_!\,\Z/m$, with $f$ 
being quasi-finite morphisms into $X$, and closed under extensions.
 Clearly, the full subcategory $\M\A(X,\Z/m)\subset\D\M(X,\Z/m)$
is also generated, using iterated extensions, by the objects
$f_!\,\Z/m$ with the morphisms $f$ being finite \'etale morphisms
onto smooth locally closed subvarieties of~$X$.

 Arguing as in the proof of Lemma~\ref{generated-by-etale}, one
shows that the subcategory $\M\A(X,\Z/m)$ is generated by
the objects $f_!\,\Z/m$, with $f$ being etale morphisms into~$X$,
using the operations of passage to a cone (when such a cone
belongs to $\M\A(X,\Z/m)$) and iterated extension.
 Let us restate the latter assertion in the following more
precise form, which we will need later. {\hbadness=1250 \par}

\begin{lem}  \label{closed-in-etale}
 For any variety $X$ of dimension~$\le d$, the full subcategory
$\M\A(X,\Z/m)\allowbreak\subset\D\M(X,\Z/m)$ is generated, using
iterated extensions, by the objects $h_!\,\Z/m$, where
$h\:Z\rarrow X$ is the restriction of an \'etale morphism
$f\:Y\rarrow X$ to a smooth closed subvariety $Z\subset Y$.
\end{lem}

\begin{proof}
 See the proof of Lemma~\ref{generated-by-etale}.
\end{proof}

 Similarly, one shows in the same way as in the proof of
Lemma~\ref{generated-by-finite} that the subcategory
$\M\A(X,\Z/m)$ is generated by the objects $g_!\,\Z/m$, with
$g$ being finite morphisms into~$X$ from normal varieties,
using the operation of passage to a cocone (when such a cocone
belongs to $\M\A(X,\Z/m)$) and iterated extension.

\begin{lem}  \label{vanishing-lemma}
 For any variety $X$ of dimension~$\le d$, and
\begin{enumerate}
\renewcommand{\theenumi}{\alph{enumi}}
\item for all $M$,
      $N\in\M\A(X,\Z/m)$, any $i\in\Z$, and $j<0$,
\item for all $M$,
      $N\in\M\A(X,\Z/m)$, any $j\in\Z$, and $i<0$
\end{enumerate}
one has $\Hom_{\D\M(X,\Z/m)}(M,N(j)[i])=0$.
\end{lem}

\begin{proof}
 As explained above, it suffices to consider the case
when $M=f_!\,\Z/m$ and $N=g_!\,\Z/m(j)$, the morphism $f\:Y\rarrow X$
being \'etale and the morphism $g\:Z\rarrow X$ being finite.
 Using the adjunction properties of the inverse and direct
images for \'etale and finite morphisms together with the base
change, we conclude that
$$
 \Hom_{\D\M(X,\Z/m)}(f_!\,\Z/m,g_!\,\Z/m(j)[i])
 \.\simeq\.\Hom_{\D\M(Y\times_XZ,\,\Z/m)}(\Z/m,\Z/m(j)[i]).
$$
 The latter group vanishes for $j<0$ by the condition~(viii)
from Section~\ref{weak-six} and for $i<0$ by
Lemma~\ref{resolution-lemma}(a\+b).
\end{proof}

 It follows from Lemma~\ref{vanishing-lemma}(b) 
that $\M\A(X,\Z/m)\subset\M\A\T(X,\Z/m)$ are two exact subcategories
of $\D\M(X,\allowbreak\Z/m)$ (see~\cite[Section~A.8]{mat}).
 They are also tensor subcategories, as $f_!\,\Z/m\otimes g_!\,\Z/m
\simeq h_!\,\Z/m$ in $\D\M(X,\Z/m)$ for any quasi-finite morphisms
$f\:Y\rarrow X$ and $g\:Z\rarrow X$ with the Cartesian product
$h\:Y\times_XZ\rarrow X$ (as one can see from the projection
formula and the base change). 
 The tensor products in these exact categories are exact functors.

 According to~\cite[Section~3.1]{mat}, it follows from
Lemma~\ref{vanishing-lemma}(a) that there is a natural finite
decreasing filtration on every object $M$ of the exact category
$\M\A\T(X,\Z/m)$ with the successive quotient objects
$\gr^jM\in\M\A(X,\Z/m)(j)$.
 All morphisms in $\M\A\T(X,\Z/m)$ preserve these filtrations,
and a short sequence with zero composition in $\M\A\T(X,\Z/m)$
is exact if and only if its short sequence of successive
quotients is exact in every component number~$j$.
 The filtration on the tensor product of any two objects of
$\M\A\T(X,\Z/m)$ is the tensor product of their filtrations.

 For any morphism $f\:Y\rarrow X$ of varieties of dimension~$\le d$
over $K$, the tensor triangulated functor $f^*\:\D\M(X,\Z/m)\rarrow
\D\M(Y,\Z/m)$ takes $\M\A(X,\Z/m)$ to $\M\A(Y,\Z/m)$ and
$\M\A\T(X,\Z/m)$ to $\M\A\T(Y,\Z/m)$, defining tensor exact
functors~$f^*$ between these exact categories.
 This follows from the base change property~(ii) from
Section~\ref{weak-six}.
 For any quasi-finite morphism $g\:Z\rarrow X$, the triangulated
functor $g_!\:\D\M(Z,\Z/m)\rarrow\D\M(X,\Z/m)$ takes $\M\A(Z,\Z/m)$
to $\M\A(X,\Z/m)$ and $\M\A\T(Z,\Z/m)$ to $\M\A\T(X,\Z/m)$,
defining exact functors~$g_!$ between these exact categories.
 The exact functors $f^*$ and $g_!$ satisfy the base change and
the projection formula, because the triangulated functors $f^*$
and $g_!$ do.

 Clearly, the \'etale realization functor $\Phi_X\:\D\M(X,\Z/m)
\rarrow\D(\Et_X^{m,\infty})$ takes $\M\A\T(X,\Z/m)$ into
$\Et_X^m$, defining a tensor exact functor between these two
exact categories (the second of which is actually abelian).
 The following lemma provides a more precise assertion.

\begin{lem}
 The functor $\Phi_X\:\M\A\T(X,\Z/m)\rarrow\Et_X^m$ takes
the the full subcategory $\M\A(X,\Z/m)\subset\M\A\T(X,\Z/m)$
into the full subcategory $\E_X^m\subset\Et_X^m$, defining
a tensor exact functor $\M\A(X,\Z/m)\rarrow\E_X^m$.
\end{lem}

\begin{proof}
 Let $h\:x\rarrow X$ be the embedding of (the spectrum of
the residue field of) a scheme point of $X$.
 Then the functor $h^*\:\M\A(X,\Z/m)\rarrow\M\A(x,\Z/m)$
forms a commutative diagram with the functor
$h^*\:\Et_X^m\rarrow\Et_x^m$ and the \'etale realization
functors $\M\A(X,\Z/m)\rarrow\Et_X^m$ and
$\M\A(x,\Z/m)\rarrow\Et_x^m$.

 By the condition~(ix) from Section~\ref{weak-six} (or
by Lemma~\ref{resolution-lemma}(a)), one has
$\Hom_{\D\M(x,\Z/m)}\allowbreak(f_!\,\Z/m,g_!\,\Z/m[1])=0$ for any
(quasi\+)finite morphisms $f\:y\rarrow x$ and $g\:z\rarrow x$.
 Hence the exact category structure on $\M\A(x,\Z/m)$ is
trivial, with all exact triples being split and all objects
isomorphic to direct sums of the objects $f_!\,\Z/m$.
 It follows that the functor $\Phi_X$ takes any object of
$\M\A(X,\Z/m)$ to an \'etale sheaf on $X$ whose stalk at $x$ is
a permutational $G_{K_x}$\+module, and any exact triple in
$\M\A(X,\Z/m)$ to an exact triple of \'etale sheaves whose
stalk at~$x$ is a split exact triple.
\end{proof}

 Applying the functor $\Phi_X$ to the natural filtration of
an object of $\M\A\T(X,\Z/m)$, we obtain a finitely filtered
object of $\Et_X^m$ with the successive quotients in
$\E_X^m\ot\mu_m^{\ot j}$, i.~e., an object of~$\F_X^m$.
 This defines the desired tensor exact functor
$\Theta_X^{-1}\:\M\A\T(X,\Z/m)\rarrow\F_X^m$.
 Since the inverse and direct image functors $f^*$ and $f_*$
on the exact categories $\M\A\T({-},\Z/m)$ preserve
the natural filtrations, the functors $\Theta^{-1}$ commute
with the inverse and direct images. {\hbadness=1050 \par}

 It remains to show that the functor $\Theta^{-1}_X$ is
an equivalence of exact categories. 
 For this purpose, we will apply the result
of~\cite[Lemma~3.2]{mat}.
 The functor $\Theta_X^{-1}$ takes the generating objects
$f_!\,\Z/m(j)$ of the exact category $\M\A\T(X,\Z/m)$, where
$f\:Y\rarrow X$ are quasi-finite morphisms into $X$, to
the generating objects $\M_\cc^m(Y/X)(j)$ of the exact
category $\F_X^m$.
 We have to show that the induced morphisms
$$
 \Ext^i_{\M\A\T(X,\Z/m)}(f_!\,\Z/m,g_!\,\Z/m(j))\lrarrow
 \Ext^i_{\F_X^m}(\M_\cc^m(Y/X),\M_\cc^m(Z/X)(j))
$$
are isomorphisms for $i=0$, $1$ and monomorphisms for $i=2$,
for all pairs of quasi-finite morphisms $f\:Y\rarrow X$ and
$g\:Z\rarrow X$.

 Arguing as in the beginning of this proof and using the 5\+lemma,
we conclude that we can assume the morphism $f$ to be \'etale
and the morphism $g$ to be finite with a normal source.
 Using the adjunctions and the base change, and the compatibility
of these with the functors~$\Theta^{-1}$, we reduce the problem
to showing that the morphisms
\begin{equation}  \label{exact-comparison}
 \Ext^i_{\M\A\T(X,\Z/m)}(\Z/m,\Z/m(j))\lrarrow
 \Ext^i_{\F_X^m}(\Z/m,\Z/m(j))
\end{equation}
are isomorphisms for $i=0$, $1$ and monomorphisms for $i=2$
for all normal varieties $X$ of dimension~$\le d$ over~$K$.
 (Indeed, a scheme \'etale over a normal scheme is normal
\cite[Corollaire~I.9.10]{SGA1}.)
 We can also assume $X$ to be connected.

 Recall that for any objects $M$, $N\in\M\A\T(X,\Z/m)$,
the natural map
\begin{equation} \label{exact-triangulated}
 \Ext^i_{\M\A\T(X,\Z/m)}(M,N)\rarrow\Hom_{\D\M(X,\Z/m)}(M,N[i])
\end{equation}
is an isomorphism for $i=0$, $1$ and a monomorphism for $i=2$
\cite[Corollary~A.17]{mat}.

 Both sides of~\eqref{exact-comparison} vanish when $j<0$.
 When $j=0$ and $i\le1$, the map~\eqref{exact-comparison}
is an isomorphism by Corollary~\ref{low-degree-cor}(c) and
Lemma~\ref{resolution-lemma}(a).
 In order to prove that $\Ext^2_{\M\A\T(X,\Z/m)}(\Z/m,\Z/m)=0$,
we will use the following lemma.

\begin{lem}  \label{Zm-injective}
 For any normal variety $X$ of dimension~$\le d$, one has\/
$\Ext^1_{\M\A(X,\Z/m)}\allowbreak(M,\Z/m)=0$ for all objects
$M\in\M\A(X,\Z/m)$.
 In other words, $\Z/m$ is an injective object of the exact
category $\M\A(X,\Z/m)$.
\end{lem}

\begin{proof}
 By Lemma~\ref{closed-in-etale}, we can assume that $M=h_!\,\Z/m$,
where $h\:Z\rarrow X$ and $Z$ is the closed complement to
an open subvariety $U$ in a variety $Y$ \'etale over~$X$.
 Let $\upsilon\:U\rarrow Y$ and $f\:Y\rarrow X$ denote
the related morphisms; then we have the exact triple
$f_!\.\upsilon_!\,\Z/m\rarrow f_!\,\Z/m\rarrow h_!\,\Z/m$ in
$\M\A(X,\Z/m)$.
 Hence the induced exact sequence
\begin{multline*}
 \Hom_{\D\M(Y,\Z/m)}(\Z/m,\Z/m)\lrarrow
 \Hom_{\D\M(U,\Z/m)}(\Z/m,\Z/m) \\ \lrarrow
 \Hom_{\D\M(X,\Z/m)}(h_!\,\Z/m,\Z/m[1])\lrarrow
 \Hom_{\D\M(Y,\Z/m)}(\Z/m,\Z/m[1]).
\end{multline*}
 By~\cite[Proposition~I.10.1]{SGA1}, the connected components
of $Y$ are irreducible.
 Since $\Hom_{\D\M(Y,\Z/m)}(\Z/m,\Z/m[1])=0$ by
Lemma~\ref{resolution-lemma}(a), and there is at most one
connected component of $U$ in every connected component of $Y$,
we are done.
\end{proof}

 Finally, when $j\ge1$, both maps
$$
 \Ext^n_{\F_X^m}(\Z/m,\Z/m(j)[i])
 \lrarrow\H_\Nis^i(X,\tau_{\le j}\R\rho_*\mu_m^{\ot j})
 \lrarrow H_{\acute et}^i(X,\mu_m^{\ot j})
$$
are isomorphisms for $i\le 1$ and monomorphisms for $i=2$
(the rightmost one obviously, and the leftmost one by
Corollary~\ref{low-degree-cor}(a)).
 So are the maps
$$
 \Ext^i_{\M\A\T(X,\Z/m)}(\Z/m,\Z/m)\lrarrow
 \Hom_{\D\M(X,\Z/m)}(\Z/m,\Z/m[i])\lrarrow H_{\acute et}^i
(X,\mu_m^{\ot j})
$$
(see Lemma~\ref{resolution-lemma}(b)).
 As the diagram is commutative (the functors $\F_X^m\rarrow
\Et_X^m$ and $\M\A\T(X,\Z/m)\rarrow\Et_X^m$ forming
a commutative diagram with the functor $\Theta_X^{-1}$),
bijectivity of the maps~\eqref{exact-comparison} for $i=0$, $1$
and their injectivity for $i=2$ follows.

\medskip
 We have shown that the functor $\Theta_X^{-1}$ is an equivalence
of exact categories, so the desired fully faithful functor
$\Theta_X\:\F_X^m\rarrow\D\M(X,\Z/m)$ is constructed.
 The functor $\Theta_X$ commutes with the inverse and direct 
image functors $f^*$ and $f_!$, since the functor $\Theta_X^{-1}$
does.
 The same applies to the compatibility with the \'etale realization
functors.
 All the assertions having been verified, the embedding
theorem is proven.
\end{proof}

\Section{Particular Cases and Applications} \label{applications}

 In this section we discuss conditions under which
the morphisms~\eqref{exact-triangulated} are isomorphisms, or
an equivalence between the derived category of an exact
category of Artin--Tate motivic sheaves and an appropriate
triangulated subcategory of the triangulated category of
motivic sheaves can be established.

 We will identify the exact category $\F_X^m$ with the exact
subcategory $\M\A\T(X,\Z/M)\allowbreak\subset\D\M(X,\Z/m)$ using
the functor $\Theta_X$ from Theorem~\ref{embedding-theorem}, and
in particular, use the notation $\M_\cc^m(Y/X)=f_!\,\Z/m$
for the corresponding objects of both categories $\F_X^m$ and
$\D\M(X,\Z/m)$ (where $f\:Y\rarrow X$ is a quasi-finite morphism).

\begin{prop} \label{triangulated-comparison}
 Assume that the triangulated categories of motivic sheaves
$\D\M(X,\Z/m)$ satisfying the conditions~\textup{(i\+-xi)} of
Section~\textup{\ref{weak-six}} are defined for varieties $X$
of dimension~$\le d$ over $K$, and that such varieties admit
resolution of singularities.
 Assume further that the maps $\theta_{\Spec L}^{m,i,j}$
from~\textup{\eqref{main-comparison-map}} are isomorphisms for
all (the residue fields $L$ of) the scheme points of varieties of
dimension $\le d$ over~$K$.
 Then the natural maps
\begin{equation}
 \Ext^i_{\F_X^m}(M,\M_\cc^m(Y/X)(j))\lrarrow
 \Hom_{\D\M(X,\Z/m)}(M,\M_\cc^m(Y/X)(j)[i])
\end{equation}
are isomorphisms for all varieties $X$ of dimension~$\le d$ over
$K$, all objects $M\in\F_X^m$, and all smooth varieties $Y$
finite over~$X$.
\end{prop}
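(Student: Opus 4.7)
The plan is to reduce the general comparison to an assertion about $\Ext$ and $\Hom$ between $\Z/m$ and $\Z/m(r)$ on smooth varieties, then identify both sides with a common Nisnevich hypercohomology via stalk-level calculations. First, by Lemma~\ref{ext-adjunction} applied to the finite adjunction $f^*\dashv f_!$ from condition~(i) of Section~\ref{weak-six}, the finiteness of $f\:Y\rarrow X$ transfers both sides of the comparison to $Y$, reducing the problem to showing that for smooth $X$ and arbitrary $N\in\F_X^m$, the map
$$\Ext^i_{\F_X^m}(N, \Z/m(j)) \lrarrow \Hom_{\D\M(X,\Z/m)}(N, \Z/m(j)[i])$$
is an isomorphism. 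A dévissage on $N$, using the natural filtration $F^\bu N$ and Lemma~\ref{generated-by-etale} inside $\E_X^m$ together with iterated 5-lemma on the exact triples in $\F_X^m$ (which map to distinguished triangles in $\D\M(X,\Z/m)$ by the exactness part of Theorem~\ref{embedding-theorem}), reduces further to the generator case $N = g_!\.\Z/m(k)$ for an \'etale $g\:Z\rarrow X$. Applying the \'etale adjunction $(g_!,g^*)$ on both sides and using invertibility of Tate twists, the problem finally becomes: for smooth $Z$ of dimension~$\le d$ and any $r\in\Z$,
$$\Ext^i_{\F_Z^m}(\Z/m, \Z/m(r)) \lrarrow \Hom_{\D\M(Z,\Z/m)}(\Z/m, \Z/m(r)[i])$$
is an isomorphism.

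For this reduced problem, I would compare two complexes of presheaves on the category of varieties \'etale over $Z$: the presheaf $U \mapsto C^\bu_{\F_U^m}(\Z/m, \Z/m(r))$ computing $\Ext^*_{\F_U^m}(\Z/m,\Z/m(r))$, and the presheaf $U \mapsto C^\bu_{\D\M(U,\Z/m)}(\Z/m, \Z/m(r))$ computing $\Hom_{\D\M(U,\Z/m)}(\Z/m,\Z/m(r)[*])$, linked by the natural transformation induced by the embedding $\Theta$ of Theorem~\ref{embedding-theorem}. Both presheaves satisfy Nisnevich descent in the sense of Lemma~\ref{nisnevich-cohomology-iso}: on the $\F$-side this is exactly Lemma~\ref{nisnevich-distinguished-pairs-lemma}, and on the $\D\M$-side the analogous assertion follows from the open-closed distinguished triangle of condition~(iii) combined with the base change of condition~(ii). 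It therefore suffices to show that the map of Nisnevich sheafifications is a quasi-isomorphism, which can be verified on Henselian stalks.

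At the Henselization $H$ of a scheme point $y$ of a variety \'etale over $Z$, with residue field $L=K_y$, the $\F$-side stalk is computed by Lemmas~\ref{nisnevich-limit-lemma}--\ref{nisnevich-residue-lemma} to be $\Ext_{\F_L^m}^*(\Z/m, \Z/m(r))$. For the $\D\M$-side stalk, I would use condition~(xi) (vanishing of $\varinjlim_{H \rarrow X}\Hom_{\D\M(X,\Z/m)}(\Z/m, \Z/m(r)[i])$ for $i>r$) together with condition~(x) (Beilinson--Lichtenbaum in degrees $i\le r$) to identify it with the Galois cohomology $H^*(G_L, \mu_m^{\ot r})$ truncated at degree~$r$. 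The induced map on stalks is then precisely $\theta^{m,*,r}_{\Spec L}$, which is an isomorphism by the standing hypothesis of the proposition. Hence the map of Nisnevich sheafifications is a quasi-isomorphism, and passing to Nisnevich hypercohomology yields the desired isomorphism on $Z$. The main obstacle is this last step --- transferring the Nisnevich-descent and Henselian-stalk machinery of Sections~4--5 from the exact categories $\F$ to the triangulated categories $\D\M$, where condition~(xi) plays the role that the split-exactness of exact triples at scheme-point stalks played on the $\F$-side in forcing the appearance of the canonical truncation $\tau_{\le r}\R\rho_*\mu_m^{\ot r}$.
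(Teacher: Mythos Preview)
Your reductions via the finite and \'etale adjunctions match the paper's exactly, and the target statement---that the map~\eqref{smooth-comparison} is an isomorphism for smooth $Z$---is the same. The difference lies in how this reduced statement is proven.

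The paper does not attempt to build a complex of presheaves $U\mapsto C^\bu_{\D\M(U,\Z/m)}(\Z/m,\Z/m(r))$ on the motivic side; the Appendix construction is only set up for exact categories, and the paper never extends it. Instead the paper splits into two cases. For $i\le j$ (and for $j<0$ or $i<0$, where both sides vanish), it observes that each side maps isomorphically to $H^i_{\acute et}(X,\mu_m^{\ot j})$: the $\F$-side by Theorem~\ref{isomorphism-theorem} under the $K(\pi,1)$ hypothesis, and the $\D\M$-side by condition~(x). For $0\le j<i$, it runs an induction on~$i$ using a direct diagram chase: given a class on $X$, one uses condition~(xi) (or the Nisnevich-local vanishing on the $\F$-side coming from the proof of Theorems~\ref{map-theorem}--\ref{isomorphism-theorem}) to find a distinguished pair $U,Z\to X$ on which the class restricts to zero, and then compares the two long exact sequences attached to the exact triple $h_!\Z/m\to\upsilon_!\Z/m\oplus g_!\Z/m\to\Z/m$ in $\F_X^m$ and to the corresponding distinguished triangle in $\D\M(X,\Z/m)$.

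Your sheaf-theoretic packaging is morally the same argument, and would go through if you had functorial Hom-complexes on the $\D\M$-side satisfying the hypotheses of Lemma~\ref{nisnevich-cohomology-iso}; but as you yourself flag, that is exactly the missing ingredient. The paper's route buys you freedom from that construction: the Mayer--Vietoris long exact sequence from condition~(iii) and the Nisnevich-local vanishing from condition~(xi) are all that is needed on the motivic side, and these live at the level of cohomology groups, not complexes. If you rewrite your stalk argument as an induction on~$i$ with a diagram chase over a single distinguished pair (as in the proof of Lemma~\ref{nisnevich-cohomology-iso}), you recover precisely the paper's proof.
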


\begin{proof}
 Since the functor $\Theta_X$ is compatible with the adjunction
isomorphisms between the groups of morphisms in $\F_X^m$ and
$\D\M(X,\Z/m)$, it suffices to check that the map
$$
 \Ext^i_{\F_Y^m}(M,\Z/m(j))\lrarrow
 \Hom_{\D\M(Y,\Z/m)}(M,\Z/m(j)[i])
$$
is an isomorphism.
 For the same reason, and using Lemma~\ref{generated-by-etale},
the question can be reduced to checking that the map
\begin{equation}  \label{smooth-comparison}
 \Ext^i_{\F_X^m}(\Z/m,\Z/m(j))\lrarrow
 \Hom_{\D\M(X,\Z/m)}(\Z/m,\Z/m(j)[i])
\end{equation}
is an isomorphism for a smooth variety $X$ of dimension~$\le d$
over~$K$.

 Both sides of~\eqref{smooth-comparison} vanish when $j<0$ or $i<0$
(see Lemma~\ref{vanishing-lemma}).
 When $i\le j$, both sides map isomorphically to
$H_{\acute et}^i(X,\mu_m^{\ot j})$ (see
Theorem~\ref{isomorphism-theorem} and the condition~(x) from
Section~\ref{weak-six}) and the diagram is commutative,
so \eqref{smooth-comparison}~is an isomorphism.

 The case $0\le j<i$ is dealt with using the condition~(xi).
 We argue by induction in~$i$ for a fixed~$j$.
 First we show that the map~\eqref{smooth-comparison}
is injective.
 Let $x$ be an element in $\Ext^i_{\F_X^m}(\Z/m,\Z/m(j))$ that dies
in $\Hom_{\D\M(X,\Z/m)}(\Z/m,\Z/m(j)[i])$.
 As explained in the proof of Theorems~\ref{map-theorem}\+-%
\ref{isomorphism-theorem}, there exists a Nisnevich cover
$U_\alpha\rarrow X$ such that the element $x$ vanishes in
the restriction to~$U_\alpha$.

 Arguing as in the proof of Lemma~\ref{nisnevich-cohomology-iso},
we can assume that there exists a distinguished pair of morphisms
$\upsilon\:U\rarrow X$ and $g\:Z\rarrow X$ in the Nisnevich topology
such that $x$~vanishes in the restriction to $U$ and~$Z$.
 Let $h$~denote the morphism $W=U\times_XZ\rarrow X$.
 Consider the exact triple
$$
 0\lrarrow h_!\,\Z/m\lrarrow \upsilon_!\,\Z/m\oplus g_!\,\Z/m\lrarrow
 \Z/m\lrarrow0
$$
in $\F_X^m$.
 From the related long exact sequence of $\Ext^*_{\F_X^m}({-},\Z/m(j))$
we see that the element $x$ comes from an element
$w\in\Ext_{\F_W^m}^{i-1}(\Z/m,\Z/m(j))$.
 In the similar long exact sequence of $\Hom_{\D\M(X,\Z/m)}({-},
\Z/m(j)[*])$, the image of the element $w$ in
$\Hom_{\D\M(W,\Z/m)}(\Z/m\;\Z/m(j)[i-1])$ comes from an element
$\zeta\in\Hom_{\D\M(U,\Z/m)}(\Z/m\;\allowbreak\Z/m(j)[i-1])\oplus
\Hom_{\D\M(Z,\Z/m)}(\Z/m\;\Z/m(j)[i-1])$.
 By the induction assumption, the latter comes from an element
$z\in\Ext_{\F_U^m}^{i-1}(\Z/m,\Z/m(j))\oplus
\Ext_{\F_Z^m}^{i-1}(\Z/m,\Z/m(j))$.
 Continuing this diagram chase, one easily concludes that $x=0$.

 Now we can check that the map~\eqref{smooth-comparison} is
surjective.
 Let $\xi$ be an element in $\Hom_{\D\M(X,\Z/m)}(\Z/m,\Z/m(j)[i])$
which does not come from $\Ext^i_{\F_X^m}(\Z/m,\Z/m)$.
 By the condition~(xi), there exists a Nisnevich cover $U_\alpha
\rarrow X$ such that the element $\xi$ vanishes in the restriction
to~$U_\alpha$.
 Arguing as in the proof of Lemma~\ref{nisnevich-cohomology-iso}
again, we can assume that there exists a distinguished pair of
morphisms $\upsilon$ and~$g$ such that the elements $\upsilon^*\xi$
and $g^*\xi$ come from $\Ext_{\F_U^m}^i(\Z/m,\Z/m(j))$ and
$\Ext_{\F_Z^m}^i(\Z/m,\Z/m(j))$, respectively.
 Let $z$ be the corresponding element in the direct sum of
the latter two groups.
 In view of the injectivity assertion that we have proven,
the image of $z$ vanishes in $\Ext_{\F_W^m}^i(\Z/m,\Z/m(j))$,
so the element $z$ comes from a certain element in
$\Ext_{\F_X^m}^i(\Z/m,\Z/m(j))$, etc.
\end{proof}

\begin{cor} \label{curves-cor}
 Assume that the triangulated categories of motivic sheaves
$\D\M(X,\allowbreak\Z/m)$ satisfying the conditions \textup{(i\+-xi)}
of Section~\textup{\ref{weak-six}} are defined for varieties
$X$ of dimension $\le 1$ over $K$.
 Assume further that the maps $\theta_{\Spec L}^{m,i,j}$
from~\textup{\eqref{main-comparison-map}} are isomorphisms
for all (the residue fields $L$ of) the scheme points of
varieties of dimension~$\le 1$ over~$K$.
 Then the natural maps
\begin{equation}
 \Ext^i_{\F_X^m}(M,N)\lrarrow\Hom_{\D\M(X,\Z/m)}(M,N[i])
\end{equation}
\textup{(}see also~\textup{\eqref{exact-triangulated})} are
isomorphisms for all varieties $X$ of dimension~$\le 1$ over $K$
and all objects $M$, $N\in\F_X^m$.
\end{cor}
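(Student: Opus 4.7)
The plan is to reduce the general case to the one already handled by Proposition~\ref{triangulated-comparison}, which gives the desired isomorphism for arbitrary $M \in \F_X^m$ whenever $N = \M_\cc^m(Y/X)(j)$ with $Y$ smooth and finite over $X$. Since $X$ has dimension $\le 1$, resolution of singularities is supplied by normalization, so all the hypotheses of Proposition~\ref{triangulated-comparison} are in force here.

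The first step is a devissage on the filtration $F$ of $N \in \F_X^m$. Both $\Ext^i_{\F_X^m}(M,-)$ and $\Hom_{\D\M(X,\Z/m)}(M,-[i])$ carry exact triples in $\F_X^m$ into long exact sequences, and the comparison map is compatible with the connecting morphisms. Induction on the filtration length of $N$, applied to the exact triples of the form $0 \to F^{j+1}N \to F^jN \to \gr^j_F N \to 0$, combined with the five-lemma, reduces the problem to the case $N = N'(j)$ with $N' \in \E_X^m$ and $j \in \Z$.

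Next I invoke Lemma~\ref{generated-by-finite}: the exact category $\E_X^m$ is the closure of the collection of objects $\M_\cc^m(Y/X)$ with $Y$ normal finite over $X$ under iterated extensions and passage to kernels of admissible epimorphisms. The key dimensional input is that, for $X$ of dimension $\le 1$ over the perfect field $K$, any $Y$ finite over $X$ has $\dim Y \le 1$, and a normal variety of dimension $\le 1$ over a perfect field is automatically smooth: at closed points the local rings are discrete valuation rings, hence regular, while the generic points of its components correspond to fields finite separable over $K$ since $K$ is perfect. Consequently the generators take the form $\M_\cc^m(Y/X)$ with $Y$ smooth finite over $X$, and Proposition~\ref{triangulated-comparison} supplies the base case of the induction (for arbitrary $M$ and after any Tate twist by $(j)$, since $\Z/m(j)$ is invertible on both sides).

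Finally, I propagate the isomorphism from these generators to all of $\E_X^m$. Any exact triple $0 \to N_1 \to N_2 \to N_3 \to 0$ in $\E_X^m$ remains exact in $\F_X^m$ after twisting by $\Z/m(j)$, and the five-lemma applied to the resulting pair of long exact sequences shows that if the comparison map is an isomorphism for two of the twisted terms $N_k(j)$, it is an isomorphism for the third. An induction on the number of operations (extension and kernel formation) needed to derive $N'$ from the generators then covers every $N' \in \E_X^m$, which, combined with the filtration devissage of the first step, finishes the proof. The only genuinely nontrivial ingredient is the observation that normal implies smooth in dimension one over a perfect field; the rest is bookkeeping. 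It is precisely this observation that restricts the corollary to curves, since in higher dimension normal finite covers of $X$ may acquire singularities and the argument breaks down.
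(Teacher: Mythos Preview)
Your argument is correct and follows exactly the same route as the paper: observe that resolution of singularities is automatic in dimension~$\le 1$, invoke Lemma~\ref{generated-by-finite} together with the fact that normal curves over a perfect field are smooth so that $\F_X^m$ is generated (via extensions and kernels of admissible epimorphisms) by the objects $\M_\cc^m(Y/X)(j)$ with $Y$ smooth and finite over~$X$, and then apply Proposition~\ref{triangulated-comparison} plus the five-lemma. Your write-up is simply a more explicit unpacking of the paper's three-sentence proof; the only inessential wobble is the parenthetical justification of smoothness (the generic points of one-dimensional components are function fields of transcendence degree~$1$, not finite extensions of~$K$), but the conclusion ``normal $+$ $\dim\le 1$ $+$ perfect base $\Rightarrow$ smooth'' is of course correct.
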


\begin{proof}
 Notice that varieties of dimension~$\le 1$ over a perfect
field $K$ always admit resolution of singularities (in the sense
of Section~\ref{resolution-secn}) by definition.
 By Lemma~\ref{generated-by-finite}, the exact category $\F_X^m$
is generated, using the operations of passage to the kernel of
an admissible epimorphism and iterated extension, by objects
$\M_\cc^m(Y/X)(j)$, where $Y$ runs over all smooth varieties
finite over~$X$ (since normal curves are smooth).
 So it remains to use Proposition~\ref{triangulated-comparison}.
\end{proof}

\begin{cor}  \label{low-weight-curves}
 Assume that the triangulated categories of motivic sheaves
$\D\M(X,\allowbreak\Z/m)$ satisfying the conditions \textup{(i\+-xi)}
of Section~\textup{\ref{weak-six}} are defined for varieties
$X$ of dimension $\le 1$ over $K$.
 Then the natural maps
\begin{equation}
 \Ext^i_{\F_X^m}(M,N(j))\lrarrow\Hom_{\D\M(X,\Z/m)}(M,N(j)[i])
\end{equation}
are isomorphisms for all varieties $X$ of dimension~$\le 1$ over $K$,
all objects $M$, $N\in\E_X^m$, and all $j\le2$.
\end{cor}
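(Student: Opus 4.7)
The strategy follows the template of the proof of Corollary~\ref{curves-cor}, replacing the $K(\pi,1)$-type hypothesis on the maps $\theta_{\Spec L}^{m,i,j}$ with the unconditional low-weight result of Corollary~\ref{low-degree-cor}(b).

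First, I would reduce the second argument $N$ to the form $N=\M_\cc^m(Y/X)$ for a smooth curve $Y$ finite over $X$. By Lemma~\ref{generated-by-finite}, every $N\in\E_X^m$ is obtained from such generators (noting that a normal curve is automatically smooth) by iterated formation of kernels of admissible epimorphisms and extensions. The 5\+lemma applied to the long exact sequences for $\Ext^*_{\F_X^m}(M,-(j))$ and $\Hom_{\D\M(X,\Z/m)}(M,-(j)[*])$ completes this reduction, so it suffices to treat $N(j)=\M_\cc^m(Y/X)(j)$.

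For this reduced statement, the claim is exactly what is proved by Proposition~\ref{triangulated-comparison} in the case $d=1$, except that the blanket hypothesis that $\theta_{\Spec L}^{m,i,j}$ is an isomorphism at the residue fields $L$ of scheme points of curves is not assumed. Inspecting the proof of Proposition~\ref{triangulated-comparison}, that hypothesis is invoked only to establish the base case $i\le j$ of the induction on $i$ at fixed $j$ (where both sides are compared to $H_{\acute et}^i(X,\mu_m^{\ot j})$ via Theorem~\ref{isomorphism-theorem} and condition~(x) of Section~\ref{weak-six}); the inductive step $i>j$ uses only condition~(xi) and the Nisnevich descent argument outlined in the proof of Lemma~\ref{nisnevich-cohomology-iso}. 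When $j\le 2$, Corollary~\ref{low-degree-cor}(b) supplies the missing input unconditionally: it asserts that $\theta^{m,i,j}_{?}$ is an isomorphism for all $i$ at every variety in the relevant dimension range, and in particular at spectra of residue fields. Resolution of singularities is automatic in dimension one (a normal curve is smooth), so Lemma~\ref{resolution-lemma} is equally available. Hence the argument of Proposition~\ref{triangulated-comparison} goes through verbatim in the range $j\le 2$.

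The only conceptual point — and the place where the hypothesis $j\le 2$ is essential — is the observation that the $K(\pi,1)$-input in the proof of Proposition~\ref{triangulated-comparison} is needed only at the specific weight $j$ appearing in the conclusion, so that Corollary~\ref{low-degree-cor}(b) is precisely strong enough to unlock the argument in low weights. I expect no genuinely new calculation to be required beyond the initial reduction on $N$; the main obstacle is bookkeeping to verify that the use of $\theta$ in the proof of Proposition~\ref{triangulated-comparison} really is confined to the single weight $j$ of interest, and not to auxiliary weights that would escape the range $j\le 2$.
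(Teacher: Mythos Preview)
Your proposal is correct and follows essentially the same approach as the paper's own proof, which simply observes that the proofs of Proposition~\ref{triangulated-comparison} and Corollary~\ref{curves-cor} go through for each fixed value of~$j$, and then invokes Corollary~\ref{low-degree-cor}(b). You have spelled out in more detail exactly where in those proofs the $K(\pi,1)$-hypothesis enters (only at the fixed weight~$j$, via Theorem~\ref{isomorphism-theorem}) and why Corollary~\ref{low-degree-cor}(b) suffices to replace it for $j\le 2$; this is precisely the bookkeeping the paper leaves implicit.
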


\begin{proof}
 It suffices to notice that the proof of
Proposition~\ref{triangulated-comparison}, and hence also of
Corollary~\ref{curves-cor}, holds for every fixed value of~$j$.
 Then one applies Corollary~\ref{low-degree-cor}(b).
\end{proof}

 Notice that the assertion of the latter Corollary is certainly
\emph{not} true for surfaces.
 The problem arises when $j=0$.
 Indeed, let $Y$ be a normal surface with a point singularity
such that the exceptional fiber $Z$ of its resolution $\widetilde Y$
is a self-intersecting projective line (see Introduction).
 Using the exact
sequences~(\ref{motivic-p-compare}\+-\ref{motivic-pi-compare}),
one easily computes $\Hom_{\D\M(Z,\Z/m)}(\Z/m,\Z/m[1])=\Z/m$ and
$\Hom_{\D\M(Y,\Z/m)}(\Z/m,\Z/m[2])=\Z/m$.
 However, $\Ext^2_{\F_Y^m}(\Z/m,\Z/m)=0$ by
Corollary~\ref{low-degree-cor}(c) or Lemma~\ref{Zm-injective}.

 Furthermore, given a smooth surface $X$ over an algebraically
closed field $K$, one can find a normal surface $Y$ finite over
$X$ having at least one singularity of the above type.
 Then $\Ext_{\F_X^m}^2(\Z/m,\M_\cc^m(Y/X))\simeq
\Ext_{\F_Y^m}^2(\Z/m,\Z/m)=0$ and
$\Hom_{\D\M(X,\Z/m)}(\Z/m,\M_\cc^m(Y/X)[2])\simeq
\Hom_{\D\M(Y,\Z/m)}(\Z/m,\Z/m[2])\ne0$.

\medskip
 In order to extend the functor $\Theta_X$ to a triangulated functor
$\D^b(\F_X^m)\rarrow\D\M(X,\Z/m)$, one needs some additional
structure on the triangulated categories of motivic sheaves
$\D\M(X,\Z/m)$.
 E.~g., it would suffice if these triangulated categories had
algebraic origin.

\begin{cor}  \label{realization-cor}
 Assume that the triangulated categories of motivic sheaves
$\D\M(X,\allowbreak\Z/m)$ satisfying the conditions \textup{(i\+-x)}
of Section~\textup{\ref{weak-six}} are defined for varieties
$X$ of dimension $\le d$ over $K$,
and that such varieties admit resolution of singularities.
 Assume additionally that the triangulated categories\/
$\D\M\F(X,\Z/m)$ of two-step filtrations, or morphisms
in the triangulated categories\/ $\D\M(X,\Z/m)$
\textup{\cite[\textit{Appendix}~D]{mat}} are defined.
 Then
\begin{enumerate} 
\renewcommand{\theenumi}{\alph{enumi}} 
\item there exist triangulated functors\/
\begin{equation} \label{realization}
 \widetilde\Theta_X\: \D^b(\F_X^m)\lrarrow\D\M(X,\Z/m)
\end{equation}
      extending the fully faithful functors $\Theta_X\:\F_X^m\rarrow
      \D\M(X,\Z/m)$;
\item if the functors of inverse image~$f^*$ for morphisms of
      varieties~$f$ and direct image with compact supports~$f_!$
      for quasi-finite morphisms~$f$ act on the triangulated
      categories $\D\M\F(X,\Z/m)$ in a way compatible with
      the structure functors $\kappa_0$, $\kappa_1$, $w$ between
      the triangulated categories $\D\M(X,\Z/m)$ and\/
      $\D\M\F(X,\Z/m)$, then the triangulated
      functors~\textup{\eqref{realization}}
      commute with the functors $f^*$ and~$f_!$.
\end{enumerate}
\end{cor}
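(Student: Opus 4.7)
The plan is to apply Beilinson's realization construction, adapted to the exact-subcategory setting, using the auxiliary triangulated category $\D\M\F(X,\Z/m)$ of two-step filtrations (or morphisms in $\D\M(X,\Z/m)$) as an enhancement. By Theorem~\ref{embedding-theorem}, the functor $\Theta_X$ identifies $\F_X^m$ with a full exact subcategory closed under extensions in $\D\M(X,\Z/m)$, so the general machinery of \cite[Appendix~D]{mat}, which is tailored precisely to this situation, is applicable.

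For part~(a), I would construct $\widetilde\Theta_X$ by an inductive peeling-off procedure on bounded complexes. Given $M^\bullet\in\D^b(\F_X^m)$ concentrated in degrees $[a,b]$, lift the differential $M^a\to M^{a+1}$ to a morphism in $\D\M\F(X,\Z/m)$ (i.~e.\ to an object of the morphism category); the canonical distinguished triangle $\kappa_1\to\kappa_0\to w\to\kappa_1[1]$ supplied by the structure functors then realizes the two-term truncation as a cone in $\D\M(X,\Z/m)$. Iterating this process $b-a$ times produces an object $\widetilde\Theta_X(M^\bullet)\in\D\M(X,\Z/m)$. To upgrade the object-level assignment to a triangulated functor, one first extends it to the bounded homotopy category $\mathcal K^b(\F_X^m)$ by lifting chain maps and chain homotopies to the filtered enhancement, and then verifies that any complex in $\mathcal K^b(\F_X^m)$ that is acyclic with respect to the exact structure on $\F_X^m$ is sent to zero in $\D\M(X,\Z/m)$. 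The universal property of the bounded derived category of an exact category then yields the desired factorization $\widetilde\Theta_X\:\D^b(\F_X^m)\rarrow\D\M(X,\Z/m)$ extending $\Theta_X$.

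For part~(b), observe that $\Theta_X$ already commutes with $f^*$ and $f_!$ by Theorem~\ref{embedding-theorem}, and that the hypothesis guarantees that $f^*$ and $f_!$ act on $\D\M\F(X,\Z/m)$ compatibly with $\kappa_0,\kappa_1,w$. The latter compatibility means that $f^*$ and $f_!$ preserve the distinguished triangles used at each step of the inductive cone construction in~(a). Hence they commute with $\widetilde\Theta$ at every stage of the peeling-off, and descending to $\D^b(\F_X^m)$ via the universal property gives natural isomorphisms of triangulated functors $f^*\circ\widetilde\Theta_X\simeq\widetilde\Theta_Y\circ f^*$ and $\widetilde\Theta_X\circ f_!\simeq f_!\circ\widetilde\Theta_Y$.

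The main technical obstacle is showing that the peeling-off assignment produces a genuinely \emph{triangulated} functor rather than a merely additive one: one must coherently lift not only morphisms but also cones, homotopies, and the octahedra witnessing the triangulated axioms, from $\mathcal K^b(\F_X^m)$ into the morphism-enhancement $\D\M\F(X,\Z/m)$. This coherence is exactly what Beilinson's filtered-derived-category formalism was designed to enforce, and the existence of $\D\M\F(X,\Z/m)$ together with the structure functors $\kappa_0,\kappa_1,w$ satisfying the appropriate axioms---assumed in the hypothesis of the Corollary and developed in \cite[Appendix~D]{mat}---is precisely what is needed in order that the standard realization construction go through verbatim.
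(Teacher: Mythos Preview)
Your proposal is correct and takes essentially the same approach as the paper: the paper's proof simply cites \cite[Theorem~D.4]{mat} for part~(a) and notes that part~(b) follows from the construction in the proof of that theorem, which is precisely the Beilinson-type realization via the two-step filtration enhancement $\D\M\F(X,\Z/m)$ that you have sketched. Your expanded description of the peeling-off/iterated-cone procedure and the role of the structure functors $\kappa_0,\kappa_1,w$ accurately captures the content behind that citation.
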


\begin{proof}
 Part~(a) is provided by~\cite[Theorem~D.4]{mat}, and part~(b)
follows from the construction in the proof of that Theorem.
\end{proof}

 Let $\F_{X;\,[0,2]}^m$ denote the full exact subcategory of
$\F_X^m$ consisting of all the objects concentrated in
the filtration components $0$, $1$, and~$2$ (i.~e., the iterated
extensions of the objects of $\E_X^m$, $\E_X^m(1)$, and
$\E_X^m(2)$).
 Denote by $\D\M\A\T(X,\Z/m)\subset\D\M(X,\Z/m)$ the full triangulated
subcategory generated by $\M\A\T(X,\Z/m)$ in $\D\M(X,\Z/m)$, and by
$\D\M\A\T_{[0,2]}(X,\Z/m)$ the full triangulated subcategory
generated by $\M\A\T_{[0,2]}=\F_{X;\,[0,2]}^m$ in
$\D\M(X,\Z/m)$.

\begin{cor}  \label{curves-realization}
 Assume that the triangulated categories of motivic sheaves
$\D\M(X,\allowbreak\Z/m)$ satisfying the conditions \textup{(i\+-xi)}
of Section~\textup{\ref{weak-six}} are defined for varieties
$X$ of dimension $\le 1$ over $K$.
 Assume additionally that triangulated categories
$\D\M\F(X,\Z/m)$ are defined.
 Then
\begin{enumerate} 
\renewcommand{\theenumi}{\alph{enumi}} 
\item the functors~\textup{\eqref{realization}} are fully faithful
      in restriction to $\D^b(\F_{X;\,[0,2]}^m)$ and induce
      equivalences of triangulated categories $\D^b(\F_{X;\,[0,2]}^m)
      \simeq\D\M\A\T_{[0,2]}(X,\Z/m)$;
\item if the maps $\theta_{\Spec L}^{m,i,j}$
      from~\textup{\eqref{main-comparison-map}}
      are isomorphisms for all (the residue fields $L$ of)
      the scheme points of varieties of dimension~$\le 1$ over $K$,
      then the functors~\textup{\eqref{realization}} are fully faithful
      and induce equivalences of triangulated categories
      $\D^b(\F_X^m)\simeq\D\M\A\T(X,\Z/m)$.
\end{enumerate}
\end{cor}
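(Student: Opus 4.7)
The plan is to first invoke Corollary~\ref{realization-cor}(a) to produce the triangulated functors $\widetilde\Theta_X\:\D^b(\F_X^m)\rarrow\D\M(X,\Z/m)$ extending the fully faithful embeddings $\Theta_X$ of Theorem~\ref{embedding-theorem}. Varieties of dimension~$\le1$ over a perfect field tautologically admit resolution of singularities, so the hypotheses of Corollary~\ref{realization-cor}(a) are automatic in our setting. Full faithfulness on the appropriate domain, and the identification of the essential image, will both follow from a devissage reducing everything to an $\Ext$-comparison between $\F_X^m$ and $\D\M(X,\Z/m)$.

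The standard devissage for a triangulated functor $\D^b(\E)\rarrow\T$ extending an exact embedding $\E\hookrightarrow\T$ yields that such a functor is fully faithful if and only if the induced maps $\Ext^i_\E(M,N)\rarrow\Hom_\T(M,N[i])$ are isomorphisms for all $M$, $N\in\E$ and all $i\ge0$; the reduction proceeds by induction on the amplitudes of bounded complexes, applying the 5\+lemma to the obvious truncation triangles. For part~(b), this $\Ext$-comparison for $M$, $N\in\F_X^m$ is exactly the content of Corollary~\ref{curves-cor}, and full faithfulness of $\widetilde\Theta_X$ on all of $\D^b(\F_X^m)$ follows.

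For part~(a), the same reasoning --- applied inside $\D^b(\F_{X;\,[0,2]}^m)$, which maps into $\D^b(\F_X^m)$ fully faithfully because $\F_{X;\,[0,2]}^m$ is closed under extensions in $\F_X^m$ --- reduces the problem to the $\Ext$-comparison for $M$, $N\in\F_{X;\,[0,2]}^m$. A further 5\+lemma devissage along the finite decreasing filtration in degrees $\{0,1,2\}$ brings this down to the case $M=A(s)$, $N=B(t)$ with $A$, $B\in\E_X^m$ and $s$, $t\in\{0,1,2\}$; cancelling the twist turns it into $\Ext^i(A,B(t-s))$ with $t-s\in\{-2,\dotsc,2\}$. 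The cases $t-s<0$ vanish on both sides by Lemma~\ref{vanishing-lemma}, while the cases $0\le t-s\le2$ are covered by Corollary~\ref{low-weight-curves}.

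Once full faithfulness is established, the essential image of $\widetilde\Theta_X$ (or of its restriction) is a strictly full triangulated subcategory of $\D\M(X,\Z/m)$ containing $\Theta_X(\F_X^m)$ (respectively $\Theta_X(\F_{X;\,[0,2]}^m)$), and conversely is contained in the triangulated subcategory generated by that image; hence it coincides with $\D\M\A\T(X,\Z/m)$ (respectively $\D\M\A\T_{[0,2]}(X,\Z/m)$) by the very definition of these subcategories. The most delicate point is the inductive $\Ext$-comparison devissage, in particular the verification for part~(a) that higher Yoneda extensions between objects of $\F_{X;\,[0,2]}^m$ computed in $\F_X^m$ can be represented by chains whose middle terms also lie in $\F_{X;\,[0,2]}^m$; here one exploits that an extension of objects supported in filtration degrees $[0,2]$ is again supported in $[0,2]$, together with a truncation of middle terms along their filtrations to stay inside the prescribed range.
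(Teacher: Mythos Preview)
Your proposal is correct and follows the same route as the paper, whose proof is the single line ``Follows from Corollaries~\ref{curves-cor}\+-\ref{realization-cor}.'' You have simply unpacked the standard d\'evissage that turns those citations into an argument: construct $\widetilde\Theta_X$ via Corollary~\ref{realization-cor}(a), reduce full faithfulness to the Ext comparison on generators, and then invoke Corollary~\ref{curves-cor} for part~(b) and Corollary~\ref{low-weight-curves} for part~(a).

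Two minor remarks. First, your separate treatment of the cases $t-s<0$ is redundant: Corollary~\ref{low-weight-curves} is stated for all $j\le2$, not just $0\le j\le2$, so it already covers the negative twists. Second, Lemma~\ref{vanishing-lemma} only literally addresses the $\D\M$ side; the vanishing of $\Ext^i_{\F_X^m}(A(s),B(t))$ for $t<s$ follows instead from the fact that the filtration truncations $M\mapsto F^sM$ and $M\mapsto M/F^sM$ are exact functors on $\F_X^m$ (exactness being tested on associated graded pieces) and provide adjoints to the relevant inclusions, so Lemma~\ref{ext-adjunction} applies. The same adjunction argument cleanly justifies your claim that $\D^b(\F_{X;\,[0,2]}^m)\rarrow\D^b(\F_X^m)$ is fully faithful, which you phrased somewhat informally in the last paragraph.
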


\begin{proof}
 Follows from Corollaries~\ref{curves-cor}\+-\ref{realization-cor}.
\end{proof}

\begin{lem} \label{cohom-dim-1}
 Let $L$ be a field whose $l$\+cohomological dimension does not
exceed\/~$1$ for every prime number~$l$ dividing~$m$.
 Then the maps $\theta_{\Spec L}^{m,i,j}$
from~\textup{\eqref{main-comparison-map}} are isomorphisms.
\end{lem}

\begin{proof}
 Our cohomological dimension assumption means that the right-hand
side of the map~\eqref{main-comparison-map} vanishes for $X$ \'etale over
$\Spec L$ whenever $i\ge2$.
 Applying Corollary~\ref{low-degree-cor}(a) and using adjunctions as
in the first paragraph of the proof of
Corollary~\ref{triangulated-comparison} (or just applying
directly~\cite[Theorem~3.1(2)]{mat}), one concludes that
$\Ext_{\F_{\Spec L}^m}^2(\M_\cc^m(\Spec M/\Spec L)\;\M_\cc^m(\Spec N/\Spec L)(j))
=0$ for all finite separable field extensions $M$ and $N/L$ and
all~$j$.
 So the exact category $\F_{\Spec L}^m$ has homological dimension~$\le 1$
and the left-hand side of~\eqref{main-comparison-map} also vanishes
for $X=\Spec L$ and $i\ge2$.
 By Corollary~\ref{low-degree-cor}(a) again, the assertion of Lemma
follows. 
\end{proof}

\begin{cor} \label{curves-over-closed-field}
 The conclusions of Corollaries~\textup{\ref{curves-cor}}
and~\textup{\ref{curves-realization}(b)} hold whenever the field~$K$
is algebraically closed (provided that the relevant motivic
triangulated categories are defined).
\end{cor}

\begin{proof}
 Follows from Lemma~\ref{cohom-dim-1} and Tsen's theorem.
\end{proof}

\begin{rem}
 One would like to extend the result of
Corollary~\ref{curves-over-closed-field} to further cases, e.~g.,
to finite fields~$K$.
 A step in this direction was made in the paper~\cite{num}, where
the $K(\pi,1)$\+conjecture was proven for Tate motives
(see~\cite[Sections~2 and~9.1]{mat} and~\cite[Section~5]{num})
and for Artin--Tate motives related to a fixed cyclic extension
of prime degree (see \cite[Section~9.8]{mat} and~\cite[Section~6]{num})
over a one-dimensional global field containing certain roots of unity.
\end{rem}

\Section{Homological Motives}  \label{homol-motives-secn}

 Here we discuss the properties of the relative homological motives
$\M_h^m(Y/X)\in\D^b(\F_X^m)$, which were defined in
Section~\ref{relative-motives-secn} for quasi-finite morphisms of
smooth varieties $Y\rarrow X$.

 For this purpose we will need to have a fourth operation $f^!$
of the ``six operations'' formalism defined on our triangulated
categories of motivic sheaves $\D\M(X,\Z/m)$.
 So we assume, in addition to the assumptions of
Section~\ref{weak-six}, the following.

\begin{enumerate}
\renewcommand{\theenumi}{\roman{enumi}}
\setcounter{enumi}{11}
\item For any morphism $f\:Y\rarrow X$ of varieties over $K$,
      the functor $f_!\:\D\M(Y,\allowbreak\Z/m)\rarrow\D\M(X,\Z/m)$
      admits a right adjoint functor $f^!\:\D\M(X,\Z/m)
      \rarrow\D\M(Y,\Z/m)$.
      Whenever the morphism $f$ is smooth of relative dimension~$n$,
      there is a functorial isomorphism $f^!(M)\simeq f^*(M)(n)[2n]$
      for all objects $M\in\D\M(X,\Z/m)$.
\end{enumerate}

\begin{prop}  \label{homological-motive-conjecture}
 Assume that the triangulated categories of motivic sheaves
$\D\M(X,\Z/m)$ satisfying the conditions~\textup{(i\+-xii)} are
defined for varieties $X$ of dimension~$\le d$ over $K$, and
that such varieties admit resolution of singularities.
 Assume further that the maps $\theta_L^{m,i,j}$
from~\eqref{main-comparison-map} are isomorphisms for all
(the residue fields $L$ of) the scheme points of varieties of
dimension~$\le d$ over~$K$.

 Then the relative motive $\M_h^m(Y/X)\in\D^b(\F_X^m)$ is
covariantly functorial with respect to arbitrary morphisms of
smooth varieties $Y$ quasi-finite over a fixed smooth variety
$X$ of dimension~$\le d$ over~$K$.
 For any quasi-finite morphism of smooth varieties $Y\rarrow X$
of dimension~$\le d$ there is a natural isomorphism of\/
$\Z/m$\+modules
\begin{equation}  \label{smooth-homol-adjunction}
 \Hom_{\D^b(\F_X^m)}(\M_h^m(Y/X),\.\Z/m(j)[i])\.\simeq\.
 \Hom_{\D^b(\F_Y^m)}(\Z/m,\.\Z/m(j)[i]).
\end{equation} 
 Both $\Z/m$\+modules are also naturally isomorphic to
the motivic cohomology module $\Hom_{\D\M(Y,\Z/m)}(\Z/m,\Z/m(j)[i])$.
\end{prop}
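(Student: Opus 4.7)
The plan is to prove the three identifications in the statement by combining a purity computation in $\D\M(X,\Z/m)$ with two applications of Proposition~\ref{triangulated-comparison}; the covariant functoriality is constructed separately. Throughout, set $d=\dim Y/X\le 0$.

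First I would establish the purity isomorphism $f^!\Z/m\simeq\Z/m(d)[2d]$ for any quasi-finite morphism $f\:Y\to X$ of smooth varieties. Locally on $Y$, factor $f=\pi\circ\gamma$ as a graph closed immersion $\gamma\:Y\hookrightarrow X\times\mathbb{A}^N$ of smooth varieties of codimension $N-d$ followed by the smooth projection $\pi\:X\times\mathbb{A}^N\to X$ of relative dimension~$N$. Combining condition~(xii) for $\pi$ (yielding $\pi^!\Z/m\simeq\Z/m(N)[2N]$) with absolute cohomological purity $\gamma^!\Z/m\simeq\Z/m(d-N)[2(d-N)]$ for the regular closed immersion~$\gamma$, one obtains $f^!\Z/m=\gamma^!\pi^!\Z/m\simeq\Z/m(d)[2d]$. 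It follows that the object $\M_h^m(Y/X)=\M_\cc^m(Y/X)(d)[2d]\in\D\M(X,\Z/m)$ (obtained via $\widetilde\Theta_X$ from Corollary~\ref{realization-cor}) coincides with $f_!f^!\Z/m$, and the $(f_!,f^!)$-adjunction of~(xii) together with $f^!(\Z/m(j)[i])\simeq\Z/m(d+j)[2d+i]$ yields
$$
\Hom_{\D\M(X,\Z/m)}(\M_h^m(Y/X),\,\Z/m(j)[i])\;\simeq\;\Hom_{\D\M(Y,\Z/m)}(\Z/m,\,\Z/m(j)[i]).
$$

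Next, $\Hom_{\D^b(\F_X^m)}(\M_h^m(Y/X),\,\Z/m(j)[i])$ unwinds to $\Ext^{i-2d}_{\F_X^m}(\M_\cc^m(Y/X),\allowbreak\Z/m(j-d))$. Applying Proposition~\ref{triangulated-comparison} with $M:=\M_\cc^m(Y/X)\in\F_X^m$ and with the ``$Y$'' of its hypothesis taken to be $X$ itself (smooth and finite over $X$ via the identity, since $X$ is smooth), this $\Ext$\+module is identified with $\Hom_{\D\M(X,\Z/m)}(\M_h^m(Y/X),\Z/m(j)[i])$, and hence by the previous paragraph with $\Hom_{\D\M(Y,\Z/m)}(\Z/m,\Z/m(j)[i])$. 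A second application of Proposition~\ref{triangulated-comparison}, this time over the base $Y$ with its identity morphism, identifies $\Hom_{\D^b(\F_Y^m)}(\Z/m,\Z/m(j)[i])=\Ext^i_{\F_Y^m}(\Z/m,\Z/m(j))$ with the same motivic cohomology group, simultaneously giving the displayed isomorphism~\eqref{smooth-homol-adjunction} and its coincidence with $\Hom_{\D\M(Y,\Z/m)}(\Z/m,\Z/m(j)[i])$.

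For the covariant functoriality in $Y$: given a morphism $g\:Y'\to Y$ of smooth varieties both quasi-finite over $X$, the morphism $\M_h^m(Y'/X)\to\M_h^m(Y/X)$ in $\D\M(X,\Z/m)$ is defined by applying $f_!$ to the counit $g_!g^!\to\id$ evaluated on $f^!\Z/m$. To lift this morphism to $\D^b(\F_X^m)$, factor $g$ as the graph closed immersion $Y'\hookrightarrow Y'\times Y$ followed by the smooth projection $Y'\times Y\to Y$, and assemble the lift using the Gysin triangle~\eqref{homological-open-closed} for the closed immersion together with the covariant \'etale and flat functoriality of $\M_\cc^m$ recorded in Section~\ref{relative-motives-secn}; compatibility with compositions then follows from the naturality of the counit. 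The main obstacle is precisely this last step: since $\widetilde\Theta_X$ is not known to be fully faithful on the entirety of $\D^b(\F_X^m)$ (outside the scope of Corollary~\ref{curves-realization}), one cannot simply transport the motivic morphism back to $\D^b(\F_X^m)$ by a formal argument, and since smooth non-\'etale morphisms do not induce direct image functors on the exact categories $\F^m$, the functoriality for the smooth-projection factor must be built by hand.
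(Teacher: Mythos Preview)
Your overall architecture---compute $f^!\Z/m$, identify $\M_h^m(Y/X)$ with $f_!f^!\Z/m$, and then apply Proposition~\ref{triangulated-comparison} twice---matches the paper's, and your use of Proposition~\ref{triangulated-comparison} is correct. But two steps diverge from the paper in ways that matter.

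\medskip
\textbf{Purity.} You compute $f^!\Z/m\simeq\Z/m(d)[2d]$ by factoring $f$ as a regular closed immersion $\gamma$ followed by a smooth projection $\pi$ and invoking ``absolute cohomological purity'' for~$\gamma$. That purity statement is \emph{not} among the hypotheses: condition~(xii) supplies $f^!\simeq f^*(n)[2n]$ only for \emph{smooth} morphisms~$f$, and says nothing about closed immersions. Moreover, the intermediate variety $X\times\mathbb A^N$ has dimension $\dim X+N$, which may exceed~$d$, so the categories $\D\M(-,\Z/m)$ are not even assumed to exist there. The paper avoids both problems by applying~(xii) not to~$f$ itself but to the smooth structure morphisms $p_X\:X\to\Spec K$ and $p_Y\:Y\to\Spec K$: from $p_Y^!\simeq f^!\circ p_X^!$ and $p_X^!\Z/m\simeq\Z/m(\dim X)[2\dim X]$, $p_Y^!\Z/m\simeq\Z/m(\dim Y)[2\dim Y]$ one reads off $f^!\Z/m\simeq\Z/m(d)[2d]$ directly. (Relatedly, you invoke $\widetilde\Theta_X$ from Corollary~\ref{realization-cor}, which requires the extra hypothesis of filtered triangulated categories $\D\M\F$; the paper observes instead that $\M_h^m(Y/X)$ is merely a shift of an object of~$\F_X^m$, so the embedding $\Theta_X$ of Theorem~\ref{embedding-theorem} already suffices.)

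\medskip
\textbf{Functoriality.} Your construction via a graph factorization $Y'\hookrightarrow Y'\times Y\to Y$ runs into the same dimension obstacle ($Y'\times Y$ need not have dimension $\le d$) and, as you yourself note, into the problem of lifting morphisms from $\D\M$ back to $\D^b(\F_X^m)$. The paper's route is much shorter and stays entirely inside the exact categories: once the isomorphism~\eqref{smooth-homol-adjunction} is established, apply it to the quasi-finite morphism $g\:Z\to Y$ of smooth varieties (with $Y$ as base) to obtain a canonical morphism $\M_h^m(Z/Y)\to\Z/m$ in $\D^b(\F_Y^m)$ corresponding to $\id_{\Z/m}$ in $\D^b(\F_Z^m)$. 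Now apply the exact functor $f_!\:\F_Y^m\to\F_X^m$ and the twist--shift $(\dim Y/X)[2\dim Y/X]$; this turns the source into $\M_h^m(Z/X)$ and the target into $\M_h^m(Y/X)$, yielding the desired map $g_*\:\M_h^m(Z/X)\to\M_h^m(Y/X)$ in $\D^b(\F_X^m)$ directly, with no need for auxiliary varieties or lifting arguments.
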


\begin{proof}
 No triangulated categories $\D\M(X,\Z/m)$ are mentioned
in the formulations of the functoriality claim and
the isomorphism~\eqref{smooth-homol-adjunction}, but they
are used in the following proof, so we need them in
the assumptions.
 Notice that the motive $\M_h^m(Y/X)\in\D^b(\F_X^m)$ is a shift of
an object of $\F_X^m$ (or, at worst, is naturally presented as
a direct sum of such shifts), so an object of the category
$\D\M(X,\Z/m)$ can be associated with the motive $\M_h^m(Y/X)$
using the functor $\Theta_X\:\F_X^m\rarrow\D\M(X,\Z/m)$.

 The object so obtained is naturally isomorphic to
$f_!\,f^!\,\Z/m\in\D\M(X,\Z/m)$.
 One can see this by applying the condition~(xii) to the smooth
morphisms $X\rarrow\Spec K$ and $Y\rarrow\Spec K$ together with
the object $M=\Z/m\in\D\M(\Spec K,\.\Z/m)$, and using
the compatibility of the functors~$f^!$ with the compositions of
the morphisms~$f$ (which follows from the similar compatibility
of the functors~$f_!$).

 Both sides of the desired isomorphism~\eqref{smooth-homol-adjunction}
are (at worst, direct sums of) certain $\Ext$ groups in the exact
categories $\F_X^m$ and $\F_Y^m$, respectively.
 In our assumptions, Proposition~\ref{triangulated-comparison}
allows to identify these groups with the related $\Hom$ groups
in the categories $\D\M(X,\Z/m)$ and $\D\M(Y,\Z/m)$.
 In order to deduce~\eqref{smooth-homol-adjunction},
it remains to use the adjunction isomorphism
$$
 \Hom_{\D\M(X,\Z/m)}(f_!\,f^!\,\Z/m,\.\Z/m(j)[i])\.\simeq\.
 \Hom_{\D\M(Y,\Z/m)}(f^!\,\Z/m,\.f^!\.\Z/m(j)[i])
$$
together with the isomorphism
$$
 \Hom_{\D\M(Y,\Z/m)}(f^!\,\Z/m,\.f^!\.\Z/m(j)[i])\.\simeq\.
 \Hom_{\D\M(Y,\Z/m)}(\Z/m,\.\Z/m),
$$
which again holds in view of the condition~(xii).

 In particular, given a quasi-finite morphism of smooth varieties
$g\:Z\rarrow Y$, there is a natural morphism $\M_h^m(Z/Y)\rarrow
\Z/m$ in $\D^b(\F_Y^m)$ corresponding to the identity morphism
$\Z/m\rarrow\Z/m$ in $\D^b(\F_Z^m)$.
 Applying the direct image functor~$f_!$ with respect to
a quasi-finite morphism of smooth varieties $f\:Y\rarrow X$
together with the twist and the shift, we obtain the desired
functoriality map
\begin{equation}  \label{smooth-homol-functoriality}
 g_*\:\M_h^m(Z/X)\lrarrow\M_h^m(Y/X)
\end{equation}
in $\D^b(\F_X^m)$.
\end{proof}

 Alternatively, given a morphism $g\:Z\rarrow Y$ of smooth varieties
quasi-finite over $X$ with $\dim Y - \dim Z\le 1$, one can define
the map~\eqref{smooth-homol-functoriality} in terms of
the adjunction  morphism $f_!\,g_!\,g^!\,f^!\,\Z/m\rarrow
f_!\,f^!\,\Z/m$ in $\D\M(X,\Z/m)$, using the fact that
$\F_X^m$ is a full exact subcategory closed under extensions in
$\D\M(X,\Z/m)$ (see~\eqref{exact-triangulated}).
 The latter construction does not even depend on any
$K(\pi,1)$\+conjectures.

\appendix \bigskip
\section*{Appendix. Complexes Computing Ext in Exact Categories}
\medskip \setcounter{section}{1}  \setcounter{thm}{0}

 Let $k$ be a commutative ring and $\E$ be a $k$\+linear small
exact category.
 To any two objects $M$ and $N\in\E$ we would like to assign
a homotopy projective complex of projective $k$\+modules
$C^\bu_\E(M,N)$ with the following properties.
\begin{enumerate}
\item To any morphism $f\:M\rarrow N$ in $\E$ a cocycle
$c(f)\in C_\E^0(M,N)$ is assigned.
 One has $c(f)=0$ whenever $f=0$.
\item For any three objects $L$, $M$, $N\in\E$ there is
a composition map $C_\E^\bu(M,N)\ot_k C_\E^\bu(L,M)
\rarrow C_\E^\bu(L,N)$.
 These multiplications are associative, the elements $c(\id_M)$
are unit objects for them, and the composition $c(f)c(g)$
is equal to $c(fg)$ whenever $f$ and $g$ are composable.
\end{enumerate}

 In particular, it follows from (1\+-2) that $C^\bu_\E(M,N)$
are contravariantly functorial in $M$ and covariantly
functorial in~$N$.
 However, the maps $f\longmapsto c(f)$ are \emph{not} necessarily
compatible with the addition of morphisms~$f$ or their
multiplication by constants from~$k$.
 So the bifunctor $(M,N)\longmapsto C^\bu_\E(M,N)$ may be neither
biadditive nor $k^*$\+biequivariant.
\begin{enumerate}
\setcounter{enumi}{2}
\item For any $k$\+linear exact functor $\gamma\:\E\rarrow\F$
and all objects $M$, $N\in\E$ there are morphisms of complexes
$C^\bu_\E(M,N)\rarrow C^\bu_\F(\gamma(M),\gamma(N))$, compatible
with the structures in~(1\+-2) in the obvious sense.
\item For any $M$, $N\in\E$, there are isomorphisms between
the cohomology $k$\+modules $H^iC^\bu_\E(M,N)$ and the Yoneda
$\Ext$ $k$\+modules $\Ext^i_\E(M,N)$, compatible with all
the structures in~(1\+-3).
\item\label{exact-triples-ext-complex}
 The three-term sequences of complexes $C_\E^\bu(M,N)$
corresponding to exact triples in either of the arguments $M$
and $N$, considered as bicomplexes with three rows, have
acyclic total complexes.
\end{enumerate}

\begin{prop}
 Complexes $C^\bu_\E(M,N)$ with the above-listed properties exist.
\end{prop}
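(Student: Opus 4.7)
The plan is to build $C^\bu_\E(M,N)$ by a normalized bar-type construction carried out inside an abelian envelope of $\E$, arranged so that the underlying combinatorial data is functorial in the \emph{sets} of morphisms of $\E$, not the $k$-modules of morphisms. This non-additive functoriality is precisely what makes properties (1)--(2) compatible with (4).

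First, I would embed $\E$ as a full extension-closed exact subcategory of a $k$-linear abelian (Grothendieck) category $\A$ via a Gabriel--Quillen/Yoneda construction — for instance, letting $\A$ be the category of $k$-linear left-exact additive contravariant functors $\E \to k\text{-}\mathrm{Mod}$ that are sheaves for the topology of admissible epimorphisms. In $\A$ one has $\Ext^i_\E(M,N)\cong \Ext^i_\A(M,N)$ for $M,N\in\E$, and the Yoneda objects $h_X=\Hom_\E(-,X)$ have controlled $\Ext$-properties. Next, for each $M\in\E$ I would build a functorial resolution $B_\bu(M)\rarrow M$ in $\A$ of bar type: $B_n(M)$ is the direct sum, over normalized chains of composable morphisms $X_0\rarrow X_1\rarrow\dotsb\rarrow X_n=M$ in $\E$, of the Yoneda object $h_{X_0}$, with the usual alternating-sum simplicial differential. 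The indexing set is the \emph{set} of composable chains, so the construction does not invoke additivity of $\Hom_\E$.

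Setting $C^\bu_\E(M,N):=\Hom_\A(B_\bu(M),N)$ produces a complex of $k$-modules; termwise homotopy projectivity can be arranged by tensoring with a projective resolution of $k$ if needed. Properties (1) and~(2) come for free: a morphism $f\:M\rarrow N$ specifies a length-zero chain at $N$ composed with a new edge $f$, yielding a canonical $c(f)\in C^0_\E(M,N)$, while concatenation of chains defines the multiplication $C^\bu_\E(M,N)\ot_k C^\bu_\E(L,M)\rarrow C^\bu_\E(L,N)$, which is strictly associative and unital by design, with $c(fg)=c(f)c(g)$ simply because composing chains is associative. Property~(3) is immediate, since every ingredient is functorial under $k$-linear exact functors. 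Property~(4) follows once one checks that $B_\bu(M)$ really is an $\A$-projective resolution of $M$ (standard extra-degeneracy/contracting-homotopy argument applied chain-by-chain) and compares with Yoneda $\Ext$ via the Gabriel--Quillen identification.

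The main obstacle is property~(5): the total complex associated to an exact triple must be acyclic. For an exact triple $0\rarrow M'\rarrow M\rarrow M''\rarrow 0$ in $\E$, the difficulty is that neither the Yoneda functor $h_{(-)}$ nor the assignment $M\mapsto B_\bu(M)$ is exact on the nose, since it depends on chains ending at~$M$, which do not split naturally across the sequence. The plan is to stratify the indexing chains in $B_n(M)$ by their interaction with the subobject $M'$ (using a choice, at the level of sets, of a pre-image structure) so as to produce a canonical short exact sequence of bicomplexes whose rows are already known to be acyclic resolutions; alternatively, one totalizes the three-term sequence and constructs an explicit contracting homotopy built from the splitting $M\cong M'\oplus M''$ that exists set-theoretically. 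For the second argument $N$ one uses that $\Hom_\A(B_n(M),-)$, being a direct product of evaluations of an admissible-epimorphism-sheaf on objects of $\E$, sends admissible short exact sequences in $\E$ to short exact sequences of $k$-modules. Combining these two halves yields the required acyclicity and completes the construction.
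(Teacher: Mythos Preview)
Your approach is genuinely different from the paper's. The paper does not pass through an abelian envelope of $\E$ at all; instead it works with the $k$-linear DG-category of complexes over $\E$, applies to each $\Hom$-complex a pseudotensor ``projective replacement'' functor $P$ constructed in an auxiliary Lemma, and then takes the Drinfeld localization by the full DG-subcategory of acyclic complexes. The composition law~(2), the functoriality under exact functors~(3), and the behavior on short exact sequences~(5) then come essentially for free from the DG-category and localization formalism; the genuine work is the Lemma, which builds $P$ together with a multiplicative but non-additive section $A^\bu\to P(A^\bu)$.

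The most serious gap in your outline is the projectivity requirement. Your terms are infinite products of the $k$-modules $\Hom_\E(X_0,N)$, and there is no reason for these, or their product, to be projective $k$-modules, nor for the resulting complex to be homotopy projective. Your one-line fix, ``tensoring with a projective resolution of $k$ if needed'', does not work: a merely functorial termwise replacement will not preserve a \emph{strictly} associative unital product; to do that one needs precisely a pseudotensor resolution functor with a multiplicative section, which is the content of the paper's key Lemma. Without such a device you cannot have property~(2) and the projectivity hypothesis simultaneously.

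There is also a mismatch between your definition and your claimed product. With $C^\bu_\E(M,N)=\Hom_\A(B_\bu(M),N)$ and $B_n(M)$ indexed by chains \emph{ending} at $M$, ``concatenation of chains'' is not literally available: a chain ending at $L$ and a chain ending at $M$ do not concatenate. One can define a cup-type product via the comonad/diagonal structure on the bar resolution, but that is not what you wrote, and verifying that it is a chain map and strictly associative is work you have not done. Your handling of~(5) is likewise only a sketch; by contrast, in the paper's setup a short exact sequence in $\E$ becomes a distinguished triangle after localizing by acyclics, so~(5) is essentially automatic.
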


\begin{proof}
 Let us start with the simple case when $k$~is a field.
 Then it suffices to consider the DG\+category of (bounded or
unbounded) complexes over $\E$ and take its Drinfeld
localization~\cite{Dr} over~$k$ with respect to the full
DG\+subcategory of acyclic complexes.
 The localization is a $k$\+linear DG\+category $D_\E^\bu$ endowed
with a natural $k$\+linear functor into it from the  category~$\E$.
 The complexes of morphisms between the images of objects
$M$, $N\in\E$ in the DG\+category $D_\E^\bu$ provide the desired
complexes $C^\bu_\E(M,N)$.

 In the general case, there is a problem that the Drinfeld
localization is only defined for DG\+categories whose complexes
of morphisms are homotopy flat complexes of $k$\+modules.
 In order to deal with it, we will use the following lemma.

\begin{lem}
 There exists a functor $P$ from the category of complexes of
$k$\+modules to the category of homotopy projective complexes
of projective $k$\+modules with the following properties.
 The functor $P$ is endowed with natural transformations
$A^\bu\rarrow P(A^\bu)\rarrow A^\bu$ for any complex of
$k$\+modules~$A^\bu$.
 The map $P(A^\bu)\rarrow A^\bu$ is a quasi-isomorphism of complexes
of $k$\+modules.
 The map $A^\bu\rarrow P(A^\bu)$ is a map of graded sets taking zero
elements to zero elements and commuting with the differentials
(but not necessarily preserving either the addition of
elements or their multiplication by constants from~$k$).
 The composition $A^\bu\rarrow P(A^\bu)\rarrow A^\bu$ is
the identity map.

 Furthermore, $P$ is a pseudotensor functor, i.~e., for any
complexes of $k$\+modules $A^\bu$ and $B^\bu$ there is a natural
morphism of complexes of $k$\+modules $P(A^\bu)\ot_k P(B^\bu)\rarrow
P(A^\bu\ot_k B^\bu)$ compatible with the associativity and (graded)
commutativity constraints in the tensor category of complexes.
 There is a morphism of complexes of $k$\+modules $k\rarrow P(k)$
that is a pseudounit for the pseudotensor functor~$P$.
 So, in particular, $P$ transforms DG\+algebras over~$k$
to DG\+algebras and $k$\+linear DG\+categories to DG\+categories.
 The maps of section $A^\bu\rarrow P(A^\bu)$ and projection
$P(A^\bu)\rarrow A^\bu$ are multiplicative (for homogeneous elements)
with respect to this pseudotensor structure.
 The image of the element\/ $1\in k$ under the pseudounit map
$k\rarrow P(k)$ coincides with the image of the same element
under the section map $k\rarrow P(k)$.
\end{lem}

 Notice that an alternative way to assign to a DG\+category $D^\bu$
a DG\+category $F(D^\bu)$ with homotopy $k$\+projective complexes
of morphisms and a quasi-isomorphism $F(D^\bu)\rarrow D^\bu$ is to
construct a functorial cofibrant resolution of $D^\bu$ in
the model category of DG\+categories over~$k$ \cite{Tab}.
 This is not difficult to do, and in addition one can have
a natural section $D^\bu\rarrow F(D^\bu)$.
 However, this section will not be multiplicative.
 This problem is solved by the construction of the functor~$P$,
which produces from a DG\+category $D^\bu$ the DG\+category
$P(D^\bu)$ whose complexes of morphisms are cofibrant as complexes
of $k$\+modules only.

\begin{proof}[Proof of Lemma]
 To convince ourselves that such a construction is at all possible,
let us first discuss the simple case when the ring~$k$ contains
a field~$f$.
 Then one can set $P(A^\bu)$ to be the (direct sum total complex of)
the (reduced or nonreduced) bar-construction of $A^\bu$ over~$k$
relative to~$f$.
 The desired pseudotensor structure is given by the operation of
shuffle product on the bar-complexes~\cite[Proposition~1.1 of
Chapter~3]{PP}.
 The section map in this case is even additive and $f$\+linear
(but not $k$\+linear).

 In the general case, our functor $P$ takes a $k$\+module $M$
to its resolution whose degree-zero term is the $k$\+module
freely generated by all the elements of $M$, modulo the only
relation that the generator corresponding to the zero element
is equal to zero.
 This construction is iterated to obtain the whole resolution.
 To a complex of $k$\+modules $A^\bu$, the functor $P$ assigns
the total complex of the bicomplex formed by the above-described
resolutions of the terms of the complex~$A^\bu$.
 The total complex is constructed by taking infinite direct sums
along the diagonals.
 The key step is to construct the pseudotensor structure on
this functor~$P$.

 Formally, for any complex of $k$\+modules $A^\bu$ we construct
the terms of the bicomplex $P_i^j(A^\bu)$ by induction in~$i$.
 The $k$\+module $P_0^j(A^\bu)$ is generated by the symbols $[a]$,
where $a\in A^j$, with the relation $[0]=0$.
 The map $\pi\:P_0^j(A^\bu)\rarrow A^\bu$ takes $[a]$ to~$a$.
 For every $i>0$, we define the $k$\+module $P_i^j(A^\bu)$
together with the differential $\d\:P_i^j(A^\bu)\rarrow
P_{i-1}^j(A^\bu)$ as follows.
 The $k$\+module $P_i^j(A^\bu)$ is generated by the symbols
$\langle p\rangle$, where $p\in P_{i-1}^j(A^\bu)$ and $\d(p)=0$
(if $i>1$) or $\pi(p)=0$ (if $i=1$), with the relation
$\langle 0\rangle=0$.
 The differential $\d$ is defined by the rules $\d\langle p
\rangle = p$ and $\d[a]=0$.

 The degree $|p|$ of an element $p\in P_i^j(A^\bu)$ is set to
be equal to $j-i$ (notice that $\langle\ \rangle$ is
consequently an operation of degree~$-1$).
 The differential~$d$ on $P(A^\bu)$ is defined by induction and
in terms of the differential~$d$ on $A$ by the rules
$d[a] = [da]$ and $d\langle p\rangle = \langle -dp\rangle$.
 The total differential on $P(A^\bu)$ is $\d+d$.
 Finally, the pseudotensor structure is defined by induction
by the rules
\begin{gather*}
 \langle p\rangle\times\langle q\rangle =
 \langle\. p\times\langle q\rangle - (-1)^{|p|}
 \langle p\rangle \times q \.\rangle, \\
 \langle p\rangle\times[a] = \langle\. p\times[a]\.\rangle,
 \quad [a]\times\langle q\rangle =
 \langle\. (-1)^{|a|}[a]\times q\.\rangle,
\end{gather*}
and $[a]\times[b]=[a\ot b]$.
 The projection map $\pi\:P(A^\bu)\rarrow A^\bu$ is extended
from $P_0^*(A^\bu)$ to the whole of $P(A^\bu)$ by the rule
$\pi(\langle p \rangle) = 0$.
 The section map $s\:A^\bu\rarrow P(A^\bu)$ is defined on
homogeneous elements $a\in A$ by the rule $s(a)=[a]$.
 The pseudounit map $e\:k\rarrow P(k)$ is defined by
the rule $e(1)=[1]$.
\end{proof}

 Now, given a $k$\+linear exact category $\E$, we apply
the pseudotensor functor $P$ to (every complex of morphisms
in) the $k$\+linear DG\+category of complexes over~$\E$.
 To the $k$\+linear DG\+category so obtained we apply, in turn,
the Drinfeld localization construction with respect to
the full DG\+subcategory consisting of the acyclic complexes
over~$\E$.
 The resulting DG\+category $D^\bu_\E$ comes together with
a (nonadditive) functor $S\:\E\rarrow D^\bu_\E$, which
is defined in terms of the section map for the pseudotensor
functor~$P$.
 Finally, we set $C_\E^\bu(M,N)=\Hom_{D^\bu_\E}(SM,SN)$.
\end{proof}

\bigskip

\end{document}